\documentclass[11pt]{article}
\usepackage{amsmath,amsthm,amssymb,amsfonts}
\usepackage[width=16.5cm,height=23cm]{geometry}
\usepackage{enumerate}
\usepackage{mathrsfs}
\usepackage[cmtip,all]{xy}
\usepackage{tikz}

\usepackage{ifxetex}
\input ifxetex.sty
\usepackage{enumerate}
\usepackage{mathrsfs}
\usepackage[cmtip,all]{xy}
\usepackage{tikz}
\usetikzlibrary{shapes.geometric}
\usepackage[final]{hyperref}
\definecolor{sq3sq1color}{rgb}{0.5,0,0.5}
\definecolor{sq2color}{rgb}{0.1,0.7,0.1}
\colorlet{taucolor}{red}
\colorlet{partialsq2sq1color}{sq3sq1color!53!black}
\colorlet{incsq2sq1color}{sq3sq1color!67!green}
\colorlet{sq2rhosq1color}{taucolor!45!sq2color}
\colorlet{sq2prcolor}{sq2color!53!black}
\colorlet{incsq2color}{sq2color!67!yellow}
\colorlet{sq2partialcolor}{sq2color!42!blue}
\colorlet{tauprcolor}{taucolor!53!black}
\colorlet{taupartialcolor}{taucolor!42!yellow}

 %notes by Bjørn
 %notes by Kyle
 %notes by Mike
 %notes by Paul

\usepackage{cleveref}
\RequirePackage[color       = blue!20,
                bordercolor = black,
                textsize    = tiny,
                figwidth    = 0.99\linewidth]{todonotes}

\theoremstyle{plain}
\newtheorem{theorem}{Theorem}[section]
\newtheorem*{theorem*}{Theorem}
\newtheorem{lemma}[theorem]{Lemma}
\newtheorem{proposition}[theorem]{Proposition}
\newtheorem*{proposition*}{Proposition}
\newtheorem{corollary}[theorem]{Corollary}
\newtheorem*{corollary*}{Corollary}
\theoremstyle{remark}
\newtheorem{remark}[theorem]{Remark}
\newtheorem{remark*}{Remark}
\theoremstyle{definition}
\newtheorem{defn}[theorem]{Definition}
\newtheorem{defn*}{Definition}

\newtheorem*{induction*}{Induction Hypothesis}

\newcommand{\im}{\mathrm{im}}
\newcommand{\supp}{\mathrm{supp}}
\newcommand{\RRR}{\mathscr{R}}
\newcommand{\XXX}{\mathscr{X}}

\newcommand{\QQQQ}{\mathscr{Q}}

\newcommand{\MHH}{\mathbf{MHH}}
\newcommand{\THH}{\mathbf{THH}}
\newcommand{\op}{\text{op}}

\newcommand{\MMM}{\mathscr{M}}
\newcommand{\NNN}{\mathscr{N}}
\newcommand{\QQQ}{\mathscr{S}}
\newcommand{\cc}{\mathsf{e}}
\newcommand{\AAA}{\mathcal{A}}

\newcommand{\GAMMA}{\Gamma}
%{\mathbf{\Gamma}}
\newcommand{\LAMBDA}{\Lambda}
%{\mathbf{\Lambda}}
\newcommand{\SSS}{\text{S}}
%{\mathbf{S}}

\newcommand{\TOR}{\mathbf{Tor}}
\newcommand{\PPP}{\text{P}}
\newcommand{\x}{\mathsf{x}}

\newcommand{\CC}{\mathbb{C}}

\newcommand{\FF}{\mathbb{F}}

\newcommand{\MM}{\mathbb{M}}
\newcommand{\NN}{\mathbb{N}}

\newcommand{\ZZ}{\mathbb{Z}}

\newcommand{\Sm}{\operatorname{Sm}}
\newcommand{\Spec}{\operatorname{Spec}}

\newcommand{\SH}{\mathscr{SH}}

\newcommand{\M}{\mathbf{M}}
\newcommand{\HHH}{\mathbf{H}}

\newcommand{\smsh}{\wedge}

\newcommand{\Tor}{\operatorname{Tor}}

\newcommand{\blambda}{\bar{\lambda}}
\newcommand{\btau}{\bar{\tau}}
\newcommand{\bxi}{\bar{\xi}}
\newcommand{\bmu}{\bar{\mu}}

\begin{document}

\title{Hochschild homology of mod-$p$ motivic cohomology over algebraically closed fields}
\author{Bj{\o}rn Ian Dundas, Michael A.~Hill, Kyle Ormsby, Paul Arne {\O}stv{\ae}r}
\date{\today}
\maketitle

\begin{abstract}
We perform Hochschild homology calculations in the algebro-geometric setting of motives.
The motivic Hochschild homology coefficient ring contains torsion classes which arise from the mod-$p$ 
motivic Steenrod algebra and from generating functions on the natural numbers with finite non-empty support.
Under the Betti realization, 
we recover B{\"o}kstedt's calculation of the topological Hochschild homology of finite prime fields. 
\end{abstract}

\section{Introduction}
\label{section:introduction}

Let $\RRR$ be a motivic ring spectrum such as algebraic cobordism, homotopy algebraic $K$-theory, 
or motivic cohomology \cite{MR1648048}.
Working in the stable motivic homotopy category $\SH(F)$ of a field $F$, 
we define the motivic Hochschild homology $\MHH(\RRR)$ of $\RRR$ as the derived tensor product 
\begin{equation}
\label{equation:mhh0}
\RRR
\wedge_{\RRR\wedge\RRR^{\op}}
\RRR.
\end{equation}
The concepts of Hochschild homology for associative algebras and topological Hochschild homology for 
structured ring spectra inspire our constructions.
In the event $\RRR$ is commutative one may equivalently to \eqref{equation:mhh0} form the tensor product
in the category of commutative motivic ring spectra
with 
the simplicial circle
\begin{equation}
\label{equation:mhh01}
S^{1}\otimes\RRR.
\end{equation}

The primary purpose of this paper is to calculate the homotopy groups of motivic Hochschild homology 
of $\M\FF_{p}$ over algebraically closed fields
--- the Suslin-Voevodsky mod-$p$ motivic cohomology ring spectrum for $p$ any prime number.
When the base field $F$ admits an embedding into the complex numbers $\mathbb{C}$, 
the Betti realization functor allows us to compare our $\MHH$ calculations with B{\"o}kstedt's pioneering 
work on topological Hochschild homology of the corresponding topological Eilenberg-Mac Lane spectrum $\HHH\FF_p$.
In fact, 
our calculation for $\MHH$ specializes to the one for $\THH$ in \cite{MB:THHZF}.
Additively, 
$\THH(\FF_p)$ splits as a restricted product of Eilenberg-MacLane spectra in the stable homotopy category.
This is not the case, however, for $\MHH(\FF_p)$, $\M\FF_{p}$, and $\SH(F)$.
The source of this extra layer of complexity is the abundance of $\tau$-torsion elements in the coefficients.
Here $\tau$ is a canonical class in the mod-$p$ motivic cohomology of $F$, which maps to the unit element in 
singular cohomology under Betti realization.
\vspace{0.05in}

We express the coefficient ring $\MHH_{\star}(\FF_p)$ in terms of algebra generators $\tau$, 
$\mu_{i}$, $\x_{S,f}$ arising from the mod-$p$ motivic Steenrod algebra \cite{MR3730515}, \cite{MR2031198}, 
and generating endofunctions $f\colon \NN\circlearrowleft$ with finite non-empty support containing some 
subset $S\subset\NN$.
The infinity of $\tau$-torsion classes $\x_{S,f}$ is not witnessed in $\THH_{\star}(\FF_p)$.
For example, 
Kronecker delta functions give rise to such classes 
(in this case, $S$ is either empty or a singleton set).

\begin{theorem}
\label{theorem:mainintroduction}
Over an algebraically closed field of exponential characteristic $\cc(F)\neq p$, 
there is an algebra isomorphism
\begin{equation}
\label{equation:maincalculation}
\MHH_{\star}(\FF_{p})
\cong 
\FF_{p}[\tau,\mu_{i},\x_{S,f}]_{i\in\NN,\, (S\subset \supp\, f,f\colon \NN\circlearrowleft)}/\mathcal{I}
\end{equation}
with the ideal of relations 
\[
\mathcal{I}=
\left(
  \begin{matrix}
    \mu_{i}^{p}-\tau^{p-1}\mu_{i+1},\\
\tau^{p-1}\x_{S,f},\\
\x_{S,f}\cdot\x_{T,g}-\underset{u\in\supp(f+g)-S\cup T}\sum\epsilon_u\cdot\x_{S\cup T\cup\{u\},f+g}
  \end{matrix}
\right).
\]
Here the support of $f$ is a finite non-empty subset of the natural numbers and $S\subset \supp f\subset \NN$ 
does not contain the minimal element of $\supp f$.  The coefficient $\epsilon_u\in\FF_p$ is given explicitly in \Cref{def:somenumbers}.
The algebra generators have bidegrees given by 
$|\tau|=(0,-1)$,
$|\mu_{i}|=(2p^{i},p^{i}-1)$,
and
\[
|\x_{S,f}|=
(|S|+1)(-1,p-1)+p\sum_{j\in\supp f}f(j)(2p^{j},p^{j}-1).
\]
\end{theorem}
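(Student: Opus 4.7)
The plan is to compute $\MHH_{\star}(\FF_p)$ via a motivic analog of B\"okstedt's spectral sequence. Starting from the equivalence $\MHH(\M\FF_p) \simeq S^1 \otimes \M\FF_p$, the cyclic bar construction produces a spectral sequence
\[
E^2_{s,t,\star} = \mathrm{HH}^{\M\FF_{p\star}}_{s,t,\star}\bigl(\M\FF_{p\star}\M\FF_p\bigr) \Longrightarrow \MHH_{\star}(\M\FF_p).
\]
The input is the motivic dual Steenrod algebra, which over an algebraically closed base of exponential characteristic prime to $p$ is given by Voevodsky's computation with the $\rho$-corrections vanishing: polynomial generators $\xi_i$ and ``exterior'' generators $\tau_i$ subject to $\tau_i^2 = \tau\xi_{i+1}$ at $p=2$ (or $\tau_i^2 = 0$ at odd $p$), all over $\M\FF_{p\star}=\FF_p[\tau]$.

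Next I would compute the $E^2$-page using a Koszul-type resolution of $\M\FF_{p\star}\M\FF_p$ as a module over itself. Each polynomial generator $\xi_i$ contributes a factor $\Lambda(d\xi_i)$ while each exterior (or twisted) generator $\tau_i$ contributes a divided power algebra $\Gamma(d\tau_i)$. The Connes $B$-operator identifies $d\tau_i$ with the class $\mu_i$ of bidegree $|\tau_i|+(1,0)=(2p^i,p^i-1)$. In characteristic $p$ the divided power structure ordinarily produces relations of the shape $\mu_i^p = 0$; the motivic twist $\tau_i^2 = \tau\xi_{i+1}$ deforms these to $\mu_i^p = \tau^{p-1}\mu_{i+1}$, assembling the tower of relations in $\mathcal{I}$.

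The $\x_{S,f}$ classes should arise from the interaction of the divided power structure with the $\tau$-torsion coming from the twisted relation. A finite-support endofunction $f\colon\NN\circlearrowleft$ encodes a monomial of weight $p\sum_j f(j)|\mu_j|$, while the subset $S\subseteq\supp f$ (excluding the minimum) records which coordinates carry an ``extra'' decoration of the form $(-1,p-1)$ produced by the twisted product $\tau_i^2=\tau\xi_{i+1}$; the excluded minimum corresponds to a unique normal form modulo this relation, and the $|S|+1$ factor in the bidegree formula reflects the one additional canonical decoration on the minimum. The product formula
\[
\x_{S,f}\cdot\x_{T,g}=\sum_{u\in\supp(f+g)-S\cup T}\epsilon_u\,\x_{S\cup T\cup\{u\},f+g}
\]
should then follow from a direct bar-complex cocycle multiplication: multiplying monomials adds supports, and the sum over $u$ records the possible ``new'' decorations forced by the twisted relations, with $\epsilon_u\in\FF_p$ extracted combinatorially as in \Cref{def:somenumbers}. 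The relation $\tau^{p-1}\x_{S,f}=0$ is the statement that these classes must vanish upon inverting $\tau$, matching the absence of such elements in the topological \textit{\THH}.

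Finally, collapse of the spectral sequence can be established via Betti realization, which maps it to B\"okstedt's topological spectral sequence, where collapse is known; the $\tau$-torsion part is then controlled by a weight-sparsity argument in the motivic bigrading. The multiplicative extension $\mu_i^p = \tau^{p-1}\mu_{i+1}$ can be confirmed by inverting $\tau$ and comparing with the calculation $\THH_*(\FF_p) = \FF_p[\mu]$ after extending scalars along $\FF_p \to \FF_p[\tau^{\pm 1}]$. The main obstacle, I expect, will be the combinatorial bookkeeping around the $\x_{S,f}$ classes: one must verify that the proposed indexing by pairs $(S,f)$ yields a complete, non-redundant family of algebra generators for the $\tau$-torsion ideal, pin down the coefficients $\epsilon_u$ precisely, and ensure that no hidden multiplicative or additive extensions beyond those in $\mathcal{I}$ intrude.
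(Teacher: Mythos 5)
Your overall framing is right—Tor/B\"okstedt spectral sequence, the dual motivic Steenrod algebra over $\FF_p[\tau]$ as input, divided powers on the $E^2$-page from the $\tau_i$'s—but three essential steps are missing or wrong, and they are where all the work in the paper actually lives.

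First, the multiplicative extension $\mu_i^p=\tau^{p-1}\mu_{i+1}$ is \emph{not} a consequence of ``the motivic twist $\tau_i^2=\tau\xi_{i+1}$ deforming the divided power relations.'' At odd primes $\tau_i^2=0$ on the nose (exterior generators, no twist), yet the relation $\mu_i^p=\tau^{p-1}\mu_{i+1}$ still holds. Moreover the $E^\infty$-page genuinely has $\mu_i^p=0$, so this is a hidden extension, and hidden extensions cannot be read off the associated graded. The paper establishes it via power operations: one shows $Q^{p^i}\sigma\tau_i=\sigma Q^{p^i}\tau_i$ for the suspension $\sigma$, proves $Q^{p^i}\tau_i=\tau^{p-1}\tau_{i+1}$ in $\AAA_\star$ by rigidity and Betti realization, and concludes $(\sigma\tau_i)^p=\tau^{p-1}\sigma\tau_{i+1}$ (\Cref{lem:extension}). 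Your alternative of ``inverting $\tau$ and comparing with $\THH$'' is insufficient on its own: it only shows that $\mu_i^p-\tau^{p-1}\mu_{i+1}$ is $\tau$-torsion, and a priori that $\tau$-torsion defect could be nonzero.

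Second, your claim that ``collapse of the spectral sequence can be established via Betti realization'' is false. B\"okstedt's topological spectral sequence does \emph{not} collapse---it has the famous $d^{p-1}(\gamma_p\sigma\tau_i)\doteq\sigma\xi_{i+1}$ differential---and the same $d^{p-1}$ (decorated by $\tau^{p-1}$) is nontrivial motivically, both integrally and after inverting $\tau$. Only the mod-$\tau^{p-1}$ spectral sequence collapses at $E^2$, and that is proved by a Chow-degree count, not by Betti realization (which kills all $\tau$-torsion and thus sees nothing about the differentials relevant to the $\x_{S,f}$'s).

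Third, and most seriously, you have no mechanism for producing the $\x_{S,f}$ classes or their relations. Saying they ``arise from the interaction of the divided power structure with the $\tau$-torsion'' is not an argument. The paper's strategy is to compute the two auxiliary rings $\MHH_\star(\FF_p)[\tau^{-1}]$ and $\MHH_\star(\FF_p)/\tau^{p-1}$ and then reassemble $\MHH_\star(\FF_p)$ as a pullback of $\FF_p[\tau]$-algebras glued along the $\tau^{p-1}$-Bockstein $\bar\partial$. The $\x_{S,f}$ are \emph{defined} as $\partial\chi_{S,f}$ for explicit monomials $\chi_{S,f}$ in the mod-$\tau^{p-1}$ ring, the Bockstein homology is computed in \Cref{sec:Dcomplex} (\Cref{lem:gamma2lambdaacyclic}, \Cref{cor:HDandZD}), and the degree $(|S|+1)(-1,p-1)$ tracks the $|S|$ exterior factors $\blambda_{m+1}$ in $\chi_{S,f}$ plus the one degree shift coming from $\partial$ itself. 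Without the Bockstein/pullback architecture (\Cref{lem:Einftyanal1} and the pullback square \eqref{equation:firstEinfinitypullback}), you cannot even state precisely where the $\tau$-torsion generators come from, let alone verify the quadratic relations $\x_{S,f}\x_{T,g}=\sum_u\epsilon_u\x_{S\cup T\cup\{u\},f+g}$, which are computed entirely inside the mod-$\tau^{p-1}$ chain complex via \Cref{lem:chismultiply}.
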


Since the homotopy of \(\MHH(\FF_p)\) is not a free module over the homotopy of \(\M\FF_p\), we deduce a non-splitting of the motivic Hochschild homology in \(\M\FF_p\)-modules.

\begin{corollary}
    The motivic Hochschild homology of \(\FF_p\) does not split as a wedge of suspensions of \(\M\FF_{p}\).
\end{corollary}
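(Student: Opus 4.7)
The plan is to derive a contradiction by comparing module structures on bigraded homotopy groups. If $\MHH(\FF_p)$ splits in $\SH(F)$ as a wedge $\bigvee_{\alpha}\Sigma^{a_\alpha,b_\alpha}\M\FF_p$, then applying $\pi_{\star,\star}$ exhibits $\MHH_{\star}(\FF_p)$ as a free module over $\M\FF_{p\star\star}$. So the task reduces to proving non-freeness of the homotopy module.

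First I would recall that over an algebraically closed field of exponential characteristic $\cc(F)\neq p$, Voevodsky's calculation gives $\M\FF_{p\star\star}=\FF_p[\tau]$, a polynomial ring in a single variable, hence an integral domain. Consequently, any free $\M\FF_{p\star\star}$-module is $\tau$-torsion-free.

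Then I would extract the contradictory piece of information directly from \Cref{theorem:mainintroduction}: the defining ideal $\mathcal{I}$ of $\MHH_{\star}(\FF_p)$ contains every element of the form $\tau^{p-1}\x_{S,f}$. Since the $\x_{S,f}$ sit among the polynomial generators in the presentation, each represents a nonzero class in the quotient annihilated by $\tau^{p-1}$. This produces nontrivial $\tau$-torsion in $\MHH_{\star}(\FF_p)$, contradicting freeness over $\FF_p[\tau]$ and ruling out any wedge splitting.

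The only step needing care is confirming that the $\x_{S,f}$ are genuinely nonzero modulo $\mathcal{I}$, which I expect to be the main (mild) obstacle; but this is immediate from the explicit presentation in \Cref{theorem:mainintroduction}, since the listed relations preserve the polynomial basis in $\x_{S,f}$ and only impose scaling by $\tau^{p-1}$ and multiplicative combinations. Bigrading considerations are compatible throughout, as the presentation is constructed to be bigraded from the outset.
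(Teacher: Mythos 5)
Your argument is correct and matches the paper's own reasoning: the presence of the $\tau$-torsion classes $\x_{S,f}$ (annihilated by $\tau^{p-1}$) shows that $\MHH_{\star}(\FF_p)$ is not free over $\M\FF_{p\star}\cong\FF_p[\tau]$, which rules out any wedge splitting. This is exactly the observation the paper makes in the sentence immediately preceding the corollary.
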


This gives a surprising obstruction to classical results about topological Hochschild homology and Thom spectra. Mahowald showed that the Eilenberg--MacLane spectrum \(\HHH\FF_2\) is a Thom spectrum of a double loop map with source \(\Omega^2 S^3\) \cite{MR445498}. Behrens--Wilson showed that an analogous result is true \(C_2\)-equivariantly, with the base now \(\Omega^{2,1}S^{3,1}\) \cite{MR3856165}. Blumberg--Cohen--Schlichtkrull showed that the topological Hochschild homology of Thom spectra are Thom spectra, and when the topological \(\eta\) vanishes, these split as smash products of the original Thom spectrum and a space related to the classifying space of the base \cite{MR2651551}. Equivariantly, classically and \(C_2\)-equivariantly, this splits as a wedge of smash powers of spheres. Putting this all together, we cannot have all of these results hold in the motivic setting.

As a guide to this paper, 
we outline the proof of \Cref{theorem:mainintroduction} and explain how the algebra generators arise in our context.
The key idea in proving our results is to study the $\tau$-inversion and mod-$\tau^{p-1}$ reduction
of $\MHH_{\star}(\FF_{p})$.
We review some background and set our notation in \Cref{section:MHH}.
In \Cref{section:acf} we divide the proof of \Cref{theorem:mainintroduction} into the following steps.
\begin{enumerate}
\item[Step 1] 
\Cref{theorem:tauinvertedMHH} calculates the $\tau$-inverted or \'etale motivic Hochschild homology 
\begin{equation}
\label{equation:mhhtauinverted}
\MHH_{\star}(\FF_{p})[\tau^{-1}]
\cong
\FF_{p}[\tau^{\pm 1},\mu_{i}]_{i\geq 0}/(\mu_{i}^{p}-\tau^{p-1}\mu_{i+1})
\cong
\FF_{p}[\mu,\tau^{\pm 1}]
\cong
\THH_{\ast}(\FF_{p})[\tau^{\pm 1}].
\end{equation}
The generator $\mu$ in \eqref{equation:mhhtauinverted} has bidegree $(2,0)$.
Hence all the classes $\mu_{i}$,
$i\in\NN$, 
generate the non-$\tau$-torsion part of $\MHH_{\star}(\FF_{p})$ subject to the relation $\mu_{i}^{p}=\tau^{p-1}\mu_{i+1}$.
\item[Step 2] 
\Cref{theorem:MHHMFCtau} calculates the coefficients of mod-$\tau^{p-1}$ motivic Hochschild homology
\begin{equation}
\label{equation:mhhtaureduction}
\MHH_{\star}(\FF_p)/\tau^{p-1}
\cong
(\bigotimes_{i\geq 0}
\GAMMA_{\FF_{p}}(\bmu_{i})
\otimes 
\LAMBDA_{\FF_{p}}(\blambda_{i+1}))
\otimes
\FF_{p,\tau}.
\end{equation}
The bidegrees of the generators are $|\blambda_{i}|=(2p^{i}-1,p^{i}-1)$, $|\bmu_{i}|=(2p^{i},p^{i}-1)$, 
and $\FF_{p,\tau}$ is shorthand for $\FF_{p}[\tau]/\tau^{p-1}$.
The divided powers algebra generator $\bmu_{i}$ is the image of $\mu_{i}\in\MHH_{\star}(\FF_p)$.
We note that \eqref{equation:mhhtaureduction} coincides with the $E^{2}$ page of the 
Tor spectral sequence for $\MHH(\FF_p)/\tau^{p-1}$.
In fact, 
the said Tor spectral sequence collapses at $E^{2}$ with no multiplicative extensions.
\item[Step 3] 
\Cref{lemma:nontrivialB} shows the $\tau^{p-1}$-Bockstein of $\gamma_{j}\bar\mu_{i}$ equals 
$\bar\lambda_{i+1}\gamma_{j-p}\bar\mu_{i}$.
First we establish the case $j=p$, 
and the rest follows by shuffle products in the bar construction of $\MHH_{\star}(\FF_p)/\tau^{p-1}$.
Here, 
the $\tau^{p-1}$-Bockstein on $\MHH_{\star}(\FF_{p})$ is the composite of the canonical boundary and quotient maps in
\begin{equation}
\label{equation:Bocksteindefinition}
\bar \partial
\colon
\MHH_{\ast+1,\ast}(\FF_{p})/\tau^{p-1}
\xrightarrow{\partial}
\MHH_{\ast,\ast+p-1}(\FF_{p})
\xrightarrow{q}
\MHH_{\ast,\ast+p-1}(\FF_{p})/\tau^{p-1}.
\end{equation}
In \Cref{corollary:Bhomology}, 
we conclude the Bockstein homology of $\MHH_{\star}(\FF_{p})/\tau^{p-1}$ is isomorphic to the graded commutative 
$\FF_{p,\tau}$-algebra $\bigoplus_{i\geq 0}\LAMBDA_{\FF_{p,\tau}}(\bmu_{i})$.
\item[Step 4] 
\Cref{lem:Einftyanal1} shows the $\tau$-torsion classes in $\MHH_{\star}(\FF_{p})$ inject into 
$\MHH_{\star}(\FF_{p})/\tau^{p-1}$ with image that of the $\tau^{p-1}$-Bockstein $\bar\partial$ 
(degrees are made explicit through generating functions).
Moreover, 
the reduction map $q$ sends the image of the boundary $\partial$ isomorphically to the image of the 
Bockstein $\bar\partial$.
\item[Step 5] 
If $f\colon \NN\circlearrowleft$ has finite support and $S\subseteq\supp f$, 
we set
$$
\chi_{S,f}
=
\left(\prod_{m\in S}\blambda_{m+1}\gamma_{pf(m)-p}\bmu_m\right)
\left(\prod_{n\not\in S}\gamma_{pf(n)}\bmu_n\right)
\in
\MHH_{\star}(\FF_{p})/\tau^{p-1}.
$$
We define the $\tau$-torsion algebra generators in \Cref{theorem:mainintroduction} by
$$
\x_{S,f}
=
\partial\chi_{S,f}
\in
\MHH_{\star}(\FF_{p}).
$$
In particular, 
$\chi_{\emptyset,0}=1$, 
$\chi_{\emptyset,p^j\delta_n}=\gamma_{p^{j+1}}\bmu_n$ and $\chi_{\{m\},\delta_m}=\lambda_{m+1}$.
Here $\delta_n: \NN\circlearrowleft$ is zero except for $\delta_n(n)=1$.  
The Bockstein yields $\x_{S,f}=\sum_{n\in\supp(f)-S}\chi_{S\cup\{n\},f}$ since 
$\bar\partial\gamma_n\bmu_{i}=\blambda_{i+1}\gamma_{n-p}\bmu_{i}$, $\bar\partial\blambda_{i}=0$, 
and $\bar\partial$ is a derivation. Since the classes $\bmu_{i}$, 
$\chi_{S,f}$, and the $\bar\partial$ cycles $\lambda_{i+1}=\bar\partial\gamma_{p}\bmu_{i}$ 
generate $\MHH_{\star}(\FF_{p})/\tau^{p-1}$, 
the classes $\bmu_{i}$ and $\x_{S,f}$ generate the boundary.
\item[Step 6] 
By combining the $\tau$-inverted and mod-$\tau^{p-1}$ calculations we finally deduce \eqref{equation:maincalculation}.
The relation $\mu_{i}^{p}=\tau^{p-1}\mu_{i+1}$ is rooted in the mod-$p$ motivic Steenrod algebra.
The Bockstein calculation shows the vanishing $\tau^{p-1}\x_{S,f}=0$. 
\Cref{cor:HDandZD} shows the multiplicative relation between the $\x_{S,f}$ classes 
follows from a similar formula for the $\chi_{S,f}$ classes.
We refer to \Cref{def:somenumbers} for the entity $\epsilon_{u}$.

For example, 
at the prime $p=2$, 
we obtain the relations
$$
\x_{\delta_0+\delta_1}\x_{\delta_2}+\x_{\delta_1+\delta_2}\x_{\delta_0}+\x_{\delta_2+\delta_0}\x_{\delta_1}
=
0,
$$
$$
\x_{\delta_0+\delta_1}\x_{\delta_1+\delta_2}
=
\x_{2\delta_1}\x_{\delta_0+\delta_2}.
$$
\end{enumerate}

\Cref{theorem:mainintroduction} admits a succinct reformulation in terms of naturally induced pullback squares
of commutative $\FF_p[\tau]$-algebras given in \Cref{sec:even} and  \Cref{sec:multex}.
For example, 
when $p=2$, 
we note the pullback square of commutative $\FF_p[\tau]$-algebras 
$$
\xymatrix{
   \MHH_{\star}(\FF_2)\ar[r]\ar[d]&\FF_p[\tau,\mu_i]/(\mu_i^2-\tau\mu_{i+1})\ar[d]\\
   \FF_2[\tau,\bmu_{i},\x_{S,f}]/\mathcal{I}
   \ar[r]&\FF_2[\tau,\bmu_i]/(\bmu_i^2,\tau)}
$$
where the ideal of relations is given by 
$$
\mathcal{I}
=
\left(\tau,\bmu_{i}^2,\x_{S,f}\cdot\x_{T,g}-\sum_{t_{f+g}\not=u\in\supp(f+g)-S\cup T}
\epsilon_{u}\cdot\x_{S\cup T\cup\{u\},f+g}\right).
$$
Our calculation shows the left vertical map in the pullback is an isomorphism on $\tau$-torsion classes.
Furthermore,
the upper horizontal map is an injection on non-$\tau$-torsion classes.
An analogous result holds for all odd primes.

\subsection{Notation}
This paper uses the following notation.
\vspace{0.05in}

\begin{tabular}{l|l}
$p$, $F$ & prime number, base field of exponential characteristic $\cc(F)\neq p$ \\
$\Sm_{F}$ & smooth separated schemes of finite type over $\Spec(F)$ \\
$\SH(F)$ & stable motivic homotopy category of $F$ \\ 
$\RRR$ & motivic ring spectrum \\
$H^{\ast,\ast}$, $h^{\ast,\ast}$ & integral, mod-$p$ motivic cohomology groups of $F$ \\
$K^{M}_{\ast}$, $k^{M}_{\ast}$ & integral, mod-$p$ Milnor $K$-groups of $F$ \\
$\MM_{\star}$ & mod-$p$ motivic homology ring of $F$ \\
$\AAA_{\star}$ & dual motivic Steenrod algebra of $F$ at $p$ \\
$\FF_{p,\tau}$ & shorthand for $\FF_{p}[\tau]/\tau^{p-1}$ \\
$\GAMMA$, $\LAMBDA$, $\SSS$ & divided power, exterior, symmetric algebras
\end{tabular}

\section{Motivic Hochschild homology}
\label{section:MHH}

\begin{defn}
\label{defn:mhh}
Let $\RRR$ be a motivic ring spectrum.  The \emph{motivic Hochschild homology} of an $\RRR$-bimodule $\MMM$ is the derived smash product 
\begin{equation}
\label{equation:mhh1}
\MHH(\RRR;\MMM)
:=
\MMM
\wedge_{\RRR\wedge\RRR^{\op}}
\RRR
\end{equation}
in $\SH(F)$.
\end{defn}

When $\RRR=\MMM$, 
the derived tensor product \eqref{equation:mhh1} specializes to $\MHH(\RRR)$ in \eqref{equation:mhh0} 
or equivalently \eqref{equation:mhh01} in the event $\RRR$ is commutative.
If $\RRR\rightarrow\QQQQ$ is a map of motivic ring spectra and $\MMM$ is a $\QQQQ$-$\RRR$ bimodule, 
then reassociating the smash factors implies the equivalence
\begin{equation}
\label{equation:mhh2}
\MHH(\RRR;\MMM)
\simeq
\MMM
\wedge_{\QQQQ\wedge\RRR^{\op}}
\QQQQ.
\end{equation}
In the following, 
we assume that $\RRR$ is a cofibrant commutative motivic ring spectrum in any of the model categorical approaches 
to $\SH(F)$ as in \cite{zbMATH02028920}, \cite{zbMATH01698557}, \cite{zbMATH01887476}, \cite{zbMATH01527083}
(this assumption is superfluous in the $\infty$-category of motivic spectra \cite{zbMATH06374152}).
Commutative motivic ring spectra are cotensored over motivic spaces by the usual adjunctions.
We will only need the special case of simplicial sets or topological spaces.  
The case of finite simplicial sets is particularly transparent since it derives from the relation 
$\{1,\dots,n\}\otimes\RRR=\RRR^{\smsh n}$.
The assignment $\XXX\mapsto\XXX\otimes\RRR$ from motivic spaces to motivic ring spectra has several 
useful properties which generalize from the topological setting and which we will use freely.
\begin{itemize}
\item $\XXX\mapsto\XXX\otimes\RRR$ is homotopy invariant and preserves coproducts 
(and so in particular sends pushouts to smashes).
\item $*\otimes\RRR\cong\RRR$, $S^0\otimes\RRR\cong\RRR\smsh\RRR$ and 
(since $S^1$ is the derived pushout of $*\gets S^0\to*$)  
$\MHH(\RRR;\MMM)\simeq \MMM\smsh_{\RRR}(S^1\otimes\RRR)$.
\item The product on $\XXX\otimes\RRR$ is induced by the fold $\XXX\coprod\XXX\to \XXX$.
\item Choosing a point $*\to\XXX$ makes $\XXX\otimes\RRR$ an augmented commutative $\RRR$-algebra.
\item The inclusion $\{-1,1\}\subseteq\{-1,0,1\}\cong \{0,-1\}\vee\{0,1\}$ induces the comultiplication 
$\RRR\smsh\RRR\to \RRR\smsh\RRR\smsh\RRR\cong(\RRR\smsh\RRR)\smsh_\RRR(\RRR\smsh\RRR)$ and the nontrivial map 
$\{-1,1\}\to\{-1,1\}$ gives the anti-involution of the ``dual Steenrod $\RRR$-Hopf algebroid'' 
$S^0\otimes\RRR=\RRR\smsh\RRR$ (alge\emph{broid} since the maps involved are not pointed, 
and so there is no guarantee that the units corresponding to the two choices of base points will coincide).
The suspension of these maps give the pinch map
$$
S^1\cong[-1,1]\coprod_{\{-1,1\}}*\to [-1,1]\coprod_{\{-1,0,1\}}*\cong S^1\vee S^1
$$ 
and the flip map $S^1\to S^1$ both of which are pointed maps, 
inducing the $\RRR$-Hopf algebra structure
$$
\psi\colon\xymatrix{S^1\otimes\RRR\ar[r]&(S^1\vee S^1)\otimes\RRR\cong 
(S^1\otimes\RRR)\smsh_{\RRR}(S^1\otimes\RRR)}, \qquad \chi\colon S^1\otimes\RRR\cong S^1\otimes\RRR
$$ 
on the ``motivic Hochschild homology''
--- 
to implement this using finite simplicial models of the circle, one must subdivide.

Hence, 
if $\MHH_{\star}(\RRR)$ is flat over $\RRR_{\star}$, 
which will turn out not to be true for $\RRR=\M\FF_p$, 
we get an $\RRR_\star$-Hopf algebra structure on $\MHH_{\star}(\RRR)$.
\item The tensor with spaces in the category of motivic spectra is $X{}\mapsto X{}_+\smsh\RRR$ and 
the universal property defines a unique map of motivic spectra
$$
\sigma^+\colon X{}_+\smsh\RRR\to X{}\otimes\RRR.
$$
If $X$ is a set considered as a motivic space, 
the inclusion of the points $\{x\}\subseteq X$ induces the desired map 
$X_+\smsh\RRR\cong\bigvee_{\{x\}\in X}\{x\}\otimes\RRR\to X\otimes\RRR$.
If $X{}$ is already pointed, 
the basepoint in $X{}$ makes $X{}\otimes\RRR$ an $\RRR$-algebra, 
giving rise to the  free extension to an $\RRR$-linear map
$$
\sigma\colon\xymatrix{\RRR\smsh X_+\smsh\RRR\ar[r]^-{1\otimes\sigma^+}&\RRR\smsh 
X{}\otimes\RRR\ar[r]^-{\text{mult.}}&X{}\otimes\RRR}.
$$
\item In general, 
if $\AAA$ is a commutative $\RRR$-algebra then $X\mapsto\AAA^X=\hom_\RRR(X_+\smsh\RRR,\AAA)$ 
is a cotensor (doesn't depend on $\RRR$).    
The unit of adjunction 
$$
\alpha^\RRR\colon \AAA\to(X\otimes^\RRR\AAA)^X
$$
is a map of commutative $\RRR$-algebras.
In the category of $\RRR$-modules, 
the adjoint of $\alpha^\RRR$ takes the form
$$
\xymatrix{
\sigma^\RRR\colon(X_+\smsh\RRR)\smsh_\RRR\AAA\ar[r]^-{1\smsh\alpha^\RRR}
&(X_+\smsh\RRR)\smsh_\RRR(X\otimes^\RRR\AAA)^X\ar[r]^-{\text{ev}}&(X\otimes^\RRR\AAA),
}
$$
where $\text{ev}$ is the evaluation.
  
Composition gives an $\RRR$-algebra map $\nu\colon \RRR^X\smsh_\RRR(X\otimes^\RRR\AAA)\to (X\otimes^\RRR\AAA)^X$. 
Assume that $X$ is a finite cell complex and that $\pi_\star(X_+\smsh \RRR)$ is a finitely generated 
$\pi_\star\RRR$-module with basis $\mathcal B$. 
Then $\nu$ is an equivalence and
$$
\nu^{-1}_*\alpha_\star\colon\pi_\star\AAA
\to
\hom_{\pi_\star\RRR}(\pi_\star(X_+\smsh\RRR),\pi_\star\RRR)\otimes_{\pi_\star\RRR}\pi_\star(X\otimes\AAA)
$$ 
satisfies
$$
\nu^{-1}_\star\alpha_\star(a)=\sum_{x\in\mathcal B}x^*\otimes\sigma^\RRR_*(x\otimes a).
$$ 
Here $x^*$ is the basis element dual to $x$ and 
$x\otimes a\in\pi_\star(X_+\smsh\RRR)\otimes_{\pi_\star\RRR}\pi_\star\AAA=\pi_\star((X_+\smsh\RRR)\smsh_\RRR\AAA)$.
We'll use this formula in \Cref{lem:extension} to get a relation in $\MHH(\FF_p)$
(in the topological case,
see \cite[\S5]{zbMATH02221879} for $X=S^1$ using the circle action).

Note that $X\otimes^\RRR(\RRR\smsh\RRR)\cong\RRR\smsh(X\otimes\RRR)$ is the tensor of $X$ with $\RRR\smsh\RRR$ 
in the category of commutative $\RRR$-algebras,  
and there is a commutative diagram
$$\xymatrix{
(X_+\smsh\RRR)\smsh_\RRR(\RRR\smsh\RRR)\ar[r]^-{\sigma^\RRR}\ar@{=}[d]^\cong&X\otimes^\RRR(\RRR\smsh\RRR)
\ar@{=}[d]^\cong\\
\RRR\smsh X_+\smsh\RRR\ar[r]^-{1\smsh\sigma^+}&\RRR\smsh X\otimes\RRR}
$$
where the vertical isomorphisms are the associators.
\end{itemize}

\subsection{Comparison of simplicial models}
\label{sec:compsimp}

It will be convenient to make explicit some of the simplicial models and how they interact.  In this subsection, let $I=\Delta[1]$ be the simplicial interval with boundary $S^0=\partial\Delta[1]$ and let $S^1=I\coprod_{S^0}*$ be the simplicial circle.  The subdivision of the circle relevant for the comultiplication is
$dS^1=(I\coprod I)\coprod_{S^0\coprod S^0}S^0$ with weak equivalence $dS^1\to S^1$ given by sending the first interval to the base point.  The pinch map $\nabla\colon dS^1\to S^1\vee S^1$ identifies the endpoints.  It is sometimes convenient to write $dS^1$ as $*\coprod_{S^0}(I\times S^0)\coprod_{S^0}*$.
Under the canonical isomorphism $\RRR=*\otimes\RRR$ we get an identification $S^1\otimes\RRR=(I\otimes^\RRR\RRR)\smsh_{S^0\otimes\RRR}\RRR$ which is a concrete model for the derived smash $\RRR\smsh^L_{\RRR\smsh\RRR}\RRR$ and $dS^1\otimes\RRR=((I\coprod I)\otimes\RRR)\smsh_{(S^0\coprod S^0)\otimes\RRR}S^0\otimes\RRR\cong \RRR\smsh_{\RRR\smsh\RRR}I\otimes(\RRR\smsh\RRR)\smsh_{\RRR\smsh\RRR}\RRR$. 
\vspace{0.1in}

 Let $\RRR\to\AAA$ be a cofibration of cofibrant commutative motivic ring spectra.  Let $X\otimes^\RRR\AAA$ be the tensor in the category of commutative $\RRR$-algebras of the space $X$ (all smashes involved are over $\RRR$). If $\MMM$ and $\NNN$ are $\AAA$-modules, then the derived smash $\MMM\smsh^L_\AAA\NNN$ is conveniently modeled as $\MMM\smsh_\AAA(I\otimes^\RRR\AAA)\smsh_\AAA\NNN$, often referred to as the ``two-sided bar construction over $\RRR$''.  Note that this does not depend on $\RRR$, in the sense that the map $\MMM\smsh_\AAA(I\otimes\AAA)\smsh_\AAA\NNN\to\MMM\smsh_\AAA(I\otimes^\RRR\AAA)\smsh_\AAA\NNN$ is an equivalence.
 In the special case $\AAA=S^0\otimes\RRR=\RRR\smsh\RRR$ we get an identification between the tensor with the subdivided circle and the bar construction
 $S^1\otimes\RRR\cong \RRR\smsh_\AAA (I\otimes^\RRR\AAA)\smsh_\AAA\RRR$
  and 
 $dS^1\otimes\RRR\cong \RRR\smsh_\AAA (I\otimes\AAA)\smsh_\AAA\RRR$.
 If one wishes to write the comultiplication
 $$\psi\colon\xymatrix{S^1\otimes\RRR&\ar[l]_\sim dS^1\otimes\RRR\ar[r]^-{\nabla\otimes 1}& (S^1\vee S^1)\otimes\RRR\ar[r]^-\cong&
 (S^1\otimes\RRR)\smsh_\RRR(S^1\otimes\RRR)}$$
 in terms of the bar construction, a concrete way is to use the equivalence $I\otimes\AAA\to\AAA$ and the augmentation $I\otimes\AAA\to\RRR$ as in the diagram
$$
\xymatrix{
   \RRR\smsh_\AAA (I\otimes\AAA)\smsh_\AAA\RRR&\ar[l]_-{\sim}
 \RRR\smsh_\AAA (I\otimes\AAA)\smsh_\AAA (I\otimes\AAA)\smsh_\AAA (I\otimes\AAA)\smsh_\AAA\RRR\ar[d]
   \\
   (\RRR\smsh_\AAA (I\otimes\AAA)\smsh_\AAA \RRR)\smsh_\RRR(\RRR\smsh_\AAA (I\otimes\AAA)\smsh_\AAA\RRR)&\ar[l]_-\cong\RRR\smsh_\AAA (I\otimes\AAA)\smsh_\AAA \RRR\smsh_\AAA (I\otimes\AAA)\smsh_\AAA\RRR.
   }
$$
This formula only uses the augmentation $\AAA\to\RRR$ and not specifically that $\AAA=\RRR\smsh\RRR$. 
One may replace the $\otimes$ by $\otimes^\RRR$ if convenient.

\subsection{Some classes coming from the dual motivic Steenrod algebra}\label{sec:subsec:dualSteenrod}
Let $\AAA_{\star}=\pi_\star(\M\FF_p\smsh\M\FF_p)$ be the dual motivic Steenrod algebra of our ground field $F$ at $p$,
\begin{equation}
\label{equation:Aingeneral}
\AAA_{\star}
=
\begin{cases}
\MM_{\star}[\xi_{i},\tau_{i}]_{i\geq 0}/(\tau_{i}^{2}-\rho(\tau_{i+1}-\tau_{0}\xi_{i+1})-\tau\xi_{i+1})& p=2 \\
\MM_{\star}[\xi_{i}]_{i\geq 0}\otimes_{\MM_{\star}} \LAMBDA_{\MM_{\star}}(\tau_{i})_{i\geq 0} & p\ne 2
\end{cases}
\end{equation}
(where $\MM_{\star}$ is the mod-$p$ motivic homology ring of $F$; $\tau$ and $\rho$ are discussed below), 
 whose Hopf algebroid structure 
is given in \cite[\S5.1]{MR3730515}, \cite[\S12]{MR2031198}.
Our notation indicates that $\tau_{i}$ is an exterior class when $p\neq 2$. 
By convention we set $\xi_{0}=1$.
The bidegrees of the generators in \eqref{equation:Aingeneral} are given by 
\[
\vert\xi_{i}\vert=(2p^i-2,p^{i}-1),\qquad
\vert\tau_{i}\vert=(2p^i-1,p^{i}-1)
\] 
The coproducts of the generators are defined by 
\begin{equation}
\label{equation:coproducts}
\psi\xi_{i} =\sum_{j=0}^{i}\xi_{i-j}^{2^{j}}\otimes\xi_{j}, \qquad
\psi\tau_{i}=\tau_{i}\otimes 1+\sum_{j=0}^{i}\xi_{i-j}^{2^{j}}\otimes\tau_{j}.
\end{equation}
The left unit is the canonical inclusion.
When $p=2$,  
the right unit is determined by 
\[
\eta_{R}(\rho)=\rho,\qquad \eta_{R}(\tau)=\tau+\rho\tau_{0}
\]
for the canonical classes $\tau\in\MM_{0,-1}\cong\mu_{2}(F)$ and 
$\rho\in\MM_{-1,-1}\cong F^{\times}/(F^{\times})^{2}$.
The mod $2$ Bockstein on $\tau$ equals $\rho$.
While $\tau$ is always nontrivial 
--- being the class of $-1\in\mu_{2}(F)$ --- 
we have $\rho=0$ if $\sqrt{-1}\in F$. 
The graded mod-$2$ Milnor $K$-theory ring $k_{\ast}^{M}\subseteq\MM_{\star}$ of the base 
field $F$ is comprised of primitive elements.
The element $\tau$ is not primitive in general.
If $F$ contains a primitive $p$th root of unity so that $\MM_{0,-1}\cong \ZZ/p\{\tau\}$,
then $\MM_{\star}\cong k^{M}_{\ast}[\tau]$ by the norm residue isomorphism \cite{Voevodskymod2}, \cite{MR2811603}.
We shall also use the antipodal generators 
\begin{equation}
\label{equation:antipodalgenerators}
c(\tau_i)=-\tau_i + \sum_{j=0}^{i-1} \xi_{i-j}^{p^j}c(\tau_j), 
\qquad
c(\xi_i) = -\xi_i + \sum_{j=1}^{i-1} \xi_{i-j}^{p^j}c(\xi_j)
\end{equation}
detailed in \cite[\S5]{MR3730515}. 
{\it For legibility, 
we will abuse notation by implicitly using the antipodal classes \eqref{equation:antipodalgenerators} 
in our computations.}
Voevodsky defines in \cite[\S3.1]{MR1977582} the mod-$p$ rigid motivic Steenrod algebra
\begin{equation}
\label{equation:rigidA}
\AAA_{\star}^{\textrm{rig}}
:=
\bigotimes_{i\geq 0}\SSS_{\FF_{p}}(\xi_{i+1})\otimes\LAMBDA_{\FF_{p}}(\tau_{i}).
\end{equation}

The equation \eqref{equation:coproducts} gives the coproducts of the generators.
For $p\neq 2$ this is the dual topological Steenrod algebra at $p$.

\begin{remark}
\label{remark:rigidity}
Suppose $\overline{F}$ is an algebraically closed field of positive characteristic $\neq p$.
Its ring of Witt vectors $W(\overline{F})$ is a henselian local ring with residue field $\overline{F}$.
Let $\overline{K}$ denote an algebraic closure of the quotient field $K$ of $W(\overline{F})$. 
We note that $\overline{K}$ has characteristic zero.
The natural maps 
$$
\overline{K}
\leftarrow  
W(\overline{F})
\rightarrow
\overline{F}
$$
induce isomorphisms on $\MM_{\star}$ and $\AAA_{\star}$ according in \cite[\S 4,5,6]{zbMATH06698209}.
These algebra isomorphisms preserve the classes $\tau_{i}$ and $\xi_{i}$.
Moreover, 
$\MM_{\star}$ and $\AAA_{\star}$ are invariant under extensions of algebraically closed fields 
of characteristic zero.
\end{remark}

The structure of the dual Steenrod algebra has some direct consequences for motivic Hochschild homology.
Recall the suspension operation
$$
\xymatrix{\sigma\colon S^1_+\smsh(\M\FF_p\smsh\M\FF_p)\ar[r]& 
\M\FF_p\smsh\MHH(\FF_p)\ar[r]^-{\text{mult.}}&\MHH(\FF_p)}.
$$
Here, 
$s_1\in H_1(S^1_+;\FF_p)$ is the standard generator and if $\zeta\in\pi_{s,w}(\M\FF_p\smsh\M\FF_p)$, 
we let ``$\sigma\zeta$'' denote the ``homology suspension'' of \(\zeta\), namely the image of \(s_1\zeta\) in $\MHH_{s+1,w}(\FF_p)$ and also in
$\pi_{s+1,w}(\M\FF_p\smsh\MHH(\FF_p))$.

\begin{lemma}
  \label{lem:extension}
  In the motivic Hochschild homology $\MHH_\star(\FF_p)$ we have the relations
  $$\tau^{p-1}\sigma\tau_{i+1}=(\sigma\tau_{i})^p,\qquad
  \tau^{p-1}\sigma\xi_{i+1}=\rho\sigma\tau_{i+1}$$
  for all $i\geq 0$ (where $\rho:=0$ for odd primes $p$).
\end{lemma}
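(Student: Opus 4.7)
The proof rests on the universal formula
\[
\nu^{-1}_\star \alpha_\star(a) = 1^\ast \otimes \iota(a) + s_1^\ast \otimes \sigma(a)
\]
from the end of the previous subsection, applied with $X = S^1$, $\RRR = \M\FF_p$, and $\AAA = \M\FF_p \wedge \M\FF_p$.  Here $\iota$ is the map induced by the basepoint inclusion $\ast \hookrightarrow S^1$ and $\sigma$, after composition with the action map $\mu\colon \M\FF_p \wedge \MHH(\FF_p) \to \MHH(\FF_p)$, is the suspension appearing in the lemma.  The key algebraic inputs are that $\alpha^\RRR$ is a ring map, so $\alpha(\zeta^p) = \alpha(\zeta)^p$, and that $(s_1^\ast)^2 = 0$ since $s_1^\ast$ lies in odd degree.

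For the second identity at $p = 2$, I would feed the quadratic relation $\tau_i^2 = \tau\xi_{i+1} + \rho(\tau_{i+1} - \tau_0 \xi_{i+1})$ in $\AAA_\star$ into this machinery.  Expanding $\alpha(\tau_i)^2$ using $(s_1^\ast)^2 = 0$ and matching the $s_1^\ast$-components yields
\[
0 = \tau\,\sigma\xi_{i+1} + \rho\,\sigma\tau_{i+1} - \rho\,\iota(\tau_0)\,\sigma\xi_{i+1} - \rho\,\sigma\tau_0\,\iota(\xi_{i+1})
\]
in $\pi_\star(\M\FF_p \wedge \MHH(\FF_p))$.  Pushing forward along $\mu$, the last two terms vanish because $\iota(\tau_0)$ and $\iota(\xi_{i+1})$ factor through the multiplication $\M\FF_p \wedge \M\FF_p \to \M\FF_p$, and the images of $\tau_0, \xi_{i+1}$ in $\MM_\star$ are zero by bidegree (since $\MM_\star \cong \FF_p[\tau]$ over an algebraically closed $F$ with $|\tau|=(0,-1)$).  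What remains is exactly $\tau\,\sigma\xi_{i+1} = \rho\,\sigma\tau_{i+1}$.  At odd primes $\rho = 0$, and the claim $\tau^{p-1}\sigma\xi_{i+1} = 0$ follows from a parallel argument using $\tau_i^p = 0$ in $\AAA_\star$ combined with the $p$-fold expansion of $\alpha(\tau_i)^p$.

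The first identity $\tau^{p-1}\sigma\tau_{i+1} = (\sigma\tau_i)^p$ is the main obstacle.  The $p$-th power $(\sigma\tau_i)^p$ is invisible to the naive expansion because the relevant $(s_1^\ast)^p$ coefficient vanishes.  To detect it, I would upgrade the construction by replacing $X = S^1$ with the subdivided-circle model $dS^1$ from \Cref{sec:compsimp}, whose pinch map $dS^1 \to S^1 \vee S^1$ provides access to iterated bar components and hence to $p$-fold products.  Equivalently, one invokes the $S^1$-action on $\MHH(\FF_p)$ and a Connes-type $B$-operator, paralleling B\"okstedt's topological argument cited just above the lemma.  The discrepancy factor $\tau^{p-1}$ relative to the topological relation $(\sigma\tau_i)^p = \sigma\tau_{i+1}$ arises from the motivic Bott element $\tau \in \MM_{0,-1}$; it is forced at $p = 2$ by reapplying the quadratic relation on $\tau_i$ after the $p$-fold power expansion, and at odd primes by the analogous Dyer-Lashof-theoretic identification of the iterated bar product with a suspension of $\tau_{i+1}$ twisted by $\tau^{p-1}$.
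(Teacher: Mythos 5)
Your handling of the second relation at $p=2$ is essentially correct and closely parallels the paper's alternative derivation in \Cref{rem:firstdiff}: there the quadratic relation $\tau_i^2 = \tau\xi_{i+1} + \rho\tau_{i+1} + \rho\tau_0\xi_{i+1}$ is fed into the Tor $d^1$-differential, which is exactly the bar-complex avatar of your $\alpha$-formula calculation; your observation that the $\iota(\tau_0)$ and $\iota(\xi_{i+1})$ terms die under the multiplication (because $\MM_{1,0}$ and $\MM_{2p^{i+1}-2,p^{i+1}-1}$ vanish) plays the role of the $d^1[\tau_0\mid\xi_{i+1}]=[\tau_0\xi_{i+1}]$ cancellation there. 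However, the paper's proof of \Cref{lem:extension} itself derives the second relation as a consequence of the first (by applying the mod-$p$ Bockstein $\beta$ to $\tau^{p-1}\sigma\tau_{i+1}=(\sigma\tau_i)^p$), so the two methods diverge structurally.

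The serious problems are with the parts you did not carry out. First, your odd-prime argument for $\tau^{p-1}\sigma\xi_{i+1}=0$ does not work. When $p$ is odd, $\tau_i$ is exterior, so $\tau_i^2=0$ already and $\alpha(\tau_i)$ is a homogeneous element of odd total degree, which forces $\alpha(\tau_i)^2=0$ automatically by graded commutativity. The ``$p$-fold expansion of $\alpha(\tau_i)^p$'' therefore yields only the tautology $0=0$: both the $1^\ast$- and $s_1^\ast$-components vanish identically because each monomial contains $(\iota\tau_i)^{p-1}=0$ or $(s_1^\ast)^2=0$. No amount of reorganizing this produces the nontrivial identity $\tau^{p-1}\sigma\xi_{i+1}=0$. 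The paper gets it by applying $\beta$ to the first relation and using $\beta\tau=0$ (odd $p$), so that $\tau^{p-1}\sigma\xi_{i+1}=\beta[(\sigma\tau_i)^p]=p(\sigma\tau_i)^{p-1}\beta\sigma\tau_i=0$; that route is unavailable until the first relation is in hand.

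Second, and more centrally, you offer no actual proof of the first relation $\tau^{p-1}\sigma\tau_{i+1}=(\sigma\tau_i)^p$, which is the hard part. You correctly identify that $(\sigma\tau_i)^p$ sits in the $(s_1^\ast)^p$-component and so is invisible to the $\alpha$-formula, but the proposed remedies do not close the gap. Replacing $S^1$ by the subdivided circle $dS^1$ cannot help here: $dS^1\simeq S^1$, so $(s_1^\ast)^p$ still vanishes in the target cohomology and the same obstruction persists. Invoking the circle action or ``Dyer--Lashof-theoretic identification'' points in the direction the paper actually goes, but the content is missing: the paper's argument requires (a) an $E_\infty$-structure providing power operations $Q^s$, (b) the commutation $Q^s\sigma = \sigma Q^s$ (established via the $\alpha$-formula applied to the compatibility $Q^s\nu^{-1}_\ast\alpha^\RRR_\ast=\nu^{-1}_\ast\alpha^\RRR_\ast Q^s$), (c) the nontrivial identity $\tau^{p-1}\tau_{i+1}=Q^{p^i}\tau_i$ in $\AAA_\star$, which is proved by rigidity (\Cref{remark:rigidity}) reducing to $\CC$ and then Betti realization to the classical Dyer--Lashof calculus, and (d) the fact that $Q^{p^i}$ is the $p$-th power on a class of topological degree $2p^i$. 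Steps (b)--(d) are what actually produce the factor $\tau^{p-1}$ and the $p$-th power; asserting that the discrepancy ``arises from the motivic Bott element'' or that the quadratic relation ``forces'' it is not a substitute. As it stands your proposal proves only the $p=2$ case of the second relation.
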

\begin{proof}
  Both relations are shown already in homology from which the homotopy versions follow by the $\M\FF_p$-algebra structure $\text{mult}\colon\M\FF_p\smsh\MHH(\FF_p)\to\MHH(\FF_p)$ (splitting the inclusion of homotopy in homology).
  
  We first show $$\tau^{p-1}(1\smsh\sigma^+)_*(s_1\otimes\tau_{i+1})=[(1\smsh\sigma^+)_*(s_1\otimes\tau_{i})]^p$$
  in the homology $\MM_\star$-algebra $\pi_{\star}(\M\FF_p\smsh \MHH(\FF_p))$.
  Let $\RRR$ be a commutative $\M\FF_p$-algebra and let $\alpha\colon S^{s,w}\to\RRR$ represent a class in $\pi_{s,w}\RRR$.  
  Commutative motivic ring spectra are $E_\infty$-algebras and the induced composite
  $$\xymatrix{
     \M\FF_p\smsh((E\Sigma_p)_+\smsh_{\Sigma_p} (S^{s,w})^{\smsh p})\ar[r]^-\alpha&
 (E\Sigma_p)_+\smsh_{\Sigma_p} \RRR^{\smsh_{\M\FF_p} p}\ar[r]^-{\text{mult}}&
 \RRR
}$$
(where $E\Sigma_p$ is the nerve of the translation category of the symmetric group $\Sigma_p$ on $p$ letters)
gives us a ``power operation''
$$P(\alpha)\colon\pi_\star(\M\FF_p\smsh((E\Sigma_p)_+\smsh_{\Sigma_p}  (S^{s,w})^{\smsh p}))\to \pi_\star\RRR.$$
The image under $P(\alpha)$ of generators may be called Dyer-Lashof operations on $\alpha$.
Precomposing with 
$$H_*(B\Sigma_p;\FF_p)\otimes H_*(S^{s,w};\M\FF_p)\to \pi_\star(\M\FF_p\smsh((E\Sigma_p)_+\smsh_{\Sigma_p}  (S^{s,w})^{\smsh p}))
$$
defined on the chain level as the diagonal
$$C_*(E\Sigma_p,\FF_p)/\Sigma_p\otimes C_*(S^{s,w};\M\FF_p)\to
C_*(E\Sigma_p,\FF_p)\otimes_{\Sigma_p} C_*(S^{s,w};\M\FF_p)^{\otimes_{\FF[\tau]} p}
$$
and evaluation at the classical choice of generator of $H_i(BC_p;\FF_p)\subseteq H_i(\Sigma_p,\FF_p)$ gives us 
the (topological) Dyer-Lashof operation $Q_i(\alpha)\in\pi_{i+ps,pw}\RRR$ on $\alpha$.  We do the usual shift to upper indexing with $Q^r(\alpha)\overset\cdot=Q_{(2r-s)(p-1)}(\alpha)$  (for $p$ odd; $Q^r(\alpha)=Q_{r-s}(\alpha)$ for $p=2$) so that $Q^0(\alpha)=\alpha$ and $Q^s(\alpha)=\alpha^p$ when $2r=s$ (for $p$ odd; $r=s$ for $p=2$).
\vspace{0.1in}

If in \Cref{sec:compsimp} we set $\RRR=\M\FF_p$ and $\AAA=\RRR\smsh\RRR$ and let $X$ be any space with finite basis $\{x\}$ for the homology $H_*(X;\FF_p)$ and recall that $\alpha^\RRR\colon\AAA\to(X\otimes^\RRR\AAA)^X$ and $\nu\colon \RRR^X\smsh_X(X\otimes^\RRR\AAA)\to X\otimes^\RRR\AAA$ were homomorphisms of commutative $\RRR$-algebras we get that
$$Q^s\nu^{-1}_*\alpha^\RRR_*=\nu^{-1}_*\alpha^\RRR_*Q^s$$
and so
$$\sum_x\sum_{a+b=s}Q^ax^*\otimes Q^b\sigma^\RRR(x\otimes a)=\sum_xx^*\otimes\sigma^\RRR(x\otimes Q^s a).$$
When $X=S^1$ the Dyer-Lashof operations $Q^a(x^*)$ are trivial for $a\neq0$ and so we get
$$
Q^s\sigma^\RRR(x\otimes a)=\sigma^\RRR(x\otimes Q^s a).
$$
Restricting to the generator $x=s_1\in H_1(S^1;\FF_p)$ and multiplying down to homotopy, 
we get the crucial formula
$$
Q^s\sigma=\sigma Q^s\colon\pi_{s,w}(\M\FF_p\smsh\M\FF_p)\to \MHH_{s+1+2s(p-1),pw}(\FF_p).
$$
By construction, 
the power operations are preserved under base change.  
Over any algebraically closed field, we claim there is a relation 
\begin{equation}
\label{equation:relation1}
\tau^{p-1}\tau_{i+1}=Q^{p^i}\tau_i\in\pi_{2p^{i+1}-1,p^{i+1}-p}(\M\FF_p\smsh\M\FF_p).
\end{equation}
By \Cref{remark:rigidity} the relation holds over $\bar F$ if and only if it does so over $\bar K$. 
Consequently, 
by rigidity is suffices to know that the relation holds over the complex numbers, 
which follows by Betti realization to the topological situation
(the motivic correction factor $\tau^{p-1}$ ensures the weights agree).
Thus, 
for the antipodal classes \eqref{equation:antipodalgenerators}, 
we obtain the formula
\begin{equation}
\label{equation:relation2}
\tau^{p-1}\sigma\tau_{i+1}=\sigma Q^{p^i}\tau_i=Q^{p^i}\sigma\tau_i= (\sigma\tau_{i})^p.
\end{equation}

We use this result to prove the vanishing of $(1\smsh\sigma^+)_*(s_1\otimes\tau^{p-1}\xi_{i+1})$.
  Let $\beta\colon\M\FF_p\to S^1\smsh\M\FF_p$ be the $p$-Bockstein, i.e.,~the $\M\FF_p$-linear boundary map in the fiber sequence of Eilenberg-MacLane spectra associated with the short exact sequence $p\ZZ/p^2\subseteq\ZZ/p^2\to\ZZ/p$.
  For any commutative ring spectrum $\RRR$ the map $(\beta\smsh 1)_*\colon\pi_\star(\M\FF_p\smsh\RRR)\to \pi_*(S^1\smsh\M\FF_p\smsh\RRR)$ is a derivation (since $p\ZZ/p^2\subseteq\ZZ/p^2$ is a square zero ideal) and as usual we allow ourselves the shorthand $\beta$ for $(\beta\smsh1)_*$.
By construction of the motivic Steenrod algebra, see \cite[\S 5]{MR3730515}, \cite[\S 9]{MR2031198}, 
the generators in the dual motivic Steenrod algebra $\AAA_\star=\pi_\star(\M\FF_p\smsh\M\FF_p)$ are connected via $$\xi_{i+1}=\beta\tau_{i+1}.$$ 
  Obviously, the diagram
  $$\xymatrix{\M\FF_p\smsh S^1_+\smsh\M\FF_p\ar[r]^{1\smsh\sigma^+}\ar[d]^{\beta\smsh 1}&\M\FF_p\smsh \MHH(\FF_p)\ar[d]^{\beta\smsh 1}\\
    S^1\smsh\M\FF_p\smsh S^1_+\smsh\M\FF_p\ar[r]^{1\smsh\sigma^+}&S^1\smsh\M\FF_p\smsh \MHH(\FF_p)
  }$$
  commutes, and since the power operations commute with $\sigma^{\M\FF_p}=1\smsh\sigma^+$ we get for $p$ odd (where $\beta\tau=0$) 
  that
  \begin{align*}
    0&=\text{mult.}\beta[\sigma^{\M\FF_p}_*(s_1\otimes\tau_i)]^p
     =\text{mult.}\beta\sigma^{\M\FF_p}_*(s_1\otimes\tau^{p-1}\tau_{i+1})\\
     &=\sigma\beta\tau^{p-1}\tau_{i+1}=\sigma\tau^{p-1}\xi_{i+1}
    =\tau^{p-1}\sigma\xi_{i+1}.
  \end{align*}
  For $p=2$ we'll see that we can easily read the last formula directly from the $d^1$-differentials in the Tor-spectral sequence, but we may also use the Bockstein and compute
  \begin{align*}
    0&=\text{mult.}\beta[\sigma^{\M\FF_2}_*(s_1\otimes\tau_i)]^2
       =\text{mult.}\beta\sigma^{\M\FF_2}_*(s_1\otimes\tau\tau_{i+1})\\
       &=\sigma(\beta(\tau\tau_{i+1}))=
    \sigma(\beta(\tau)\tau_{i+1}+\tau\beta(\tau_{i+1}))=\sigma(\rho\tau_{i+1}+\tau\xi_{i+1})=\rho\sigma\tau_{i+1}+\tau\sigma\xi_{i+1}.
  \end{align*}
\end{proof}

\subsection{Tor spectral sequence for motivic Hochschild homology}
\label{subsction:tkssfmhh}

A motivic spectrum is cellular if it belongs to the smallest full subcategory of the 
stable motivic homotopy category, which is closed under homotopy colimits and contains 
the motivic spheres $S^{p,q}$ for all $p, q \in \ZZ$, 
see \cite[\S2.8]{MR2153114}.
The cellularity assumption is central in motivic homotopy theory, 
see, e.g., 
\cite[\S2.3]{zbMATH07003144}.
It is, moreover, needed for running the motivic Tor spectral sequence.
\vspace{0.1in}

We begin by relating the integral Tor spectral sequence to the bar construction.
Our setup is a map of motivic ring spectra $\RRR\rightarrow\QQQQ$ and an $\QQQQ$-$\RRR$ bimodule $\MMM$.
We assume that $\RRR$ is a commutative motivic ring spectrum and $\AAA=\QQQQ\wedge\RRR$ is a cofibrant $\RRR$-algebra.
Then the derived smash product $\MMM\wedge_{\AAA}\QQQQ$ in \eqref{equation:mhh2} is the realization of the 
simplicial object 
$$
\{[s]\mapsto \MMM\smsh_{\RRR} \AAA^{\smsh_{\RRR}^{s}}\smsh_{\RRR} \QQQQ\}. 
$$
The skeletal filtration yields the $E^1$ page of the Tor spectral sequence, 
which --- if $\pi_{\star}\AAA$ is flat over $\pi_{\star}\RRR$ --- 
takes the form
$$
E^1_{s,\star}
=
B_s(\pi_{\star}\MMM,\pi_{\star}\AAA,\pi_{\star}\QQQQ)
=
\pi_{\star}\MMM
\otimes_{\pi_{\star}\RRR}\pi_{\star}\AAA^{\otimes_{\pi_{\star}\RRR}^{s}}
\otimes_{\pi_{\star}\RRR}
\pi_{\star}\QQQQ.
$$
It is conventional to denote the generators of the bar complex by $[m_0|a_1|\dots|a_s|n_{s+1}]$.
When $\MMM=\QQQQ=\RRR$ we abbreviate $[1|a_1|\dots|a_s|1]$ to $[a_1|\dots|a_s]$.
The homology of $E^1_{s,\star}$ computes the $E^{2}$ page of the Tor spectral sequence \eqref{equation:bss}.
We recall the $d^{1}$ differential is given by the alternating sum of the face maps
$$
d_{i}[m_0|a_1|\dots|a_s|n_{s+1}]
=
\begin{cases}
  [m_0\cdot a_1|a_2|\dots|a_s|n_{s+1}]&i=0\\
  [m_0|a_1|\dots|a_i\cdot a_{i+1}|\dots|a_s|n_{s+1}]&0<i<s\\
  [m_0|a_1|\dots|a_{s-1}|a_s\cdot n_{s+1}]&i=s.
\end{cases}
$$

If $\AAA$ is commutative 
and the modules $\MMM$ and $\QQQQ$ are commutative $\AAA$-algebras, 
then the skeletal filtration of the simplicial object 
$\{[s]\mapsto \MMM\smsh_{\RRR} \AAA^{\smsh_{\RRR}^{s}}\smsh_{\RRR} \QQQQ\}$ is isomorphic to $(\MMM\smsh_\RRR\QQQQ)\otimes_\AAA (S^1\otimes \AAA)$ \emph{in the category of commutative $\RRR$-algebras}. This is 
isomorphic to the more Hochschild-homology-looking $\{[s]\mapsto(\MMM\smsh_\RRR\QQQQ)\smsh\QQQQ^{\smsh s}\}$, and the filtration arises from the simplicial circle.
Hence the Tor spectral sequence \eqref{equation:bss} is a $\pi_{\star}\RRR$-algebra spectral sequence with the 
multiplicative structure on the $E^{1}$ page given by the shuffle product introduced by Eilenberg-Mac Lane 
\cite{zbMATH03080175}.

\begin{proposition}
\label{prop:bss}
Suppose $\MMM$, $\RRR$, and $\QQQ$ are cellular motivic spectra.
Then 
the skeletal filtration of the simplicial circle gives rise to a strongly convergent trigraded Tor spectral sequence 
\begin{equation}
\label{equation:bss}
E^{2}_{h,t,w}
=
\TOR^{\pi_{\star}(\QQQ\wedge\RRR^{\op})}_{h,t,w}(\pi_{\star}(\MMM),\pi_{\star}(\QQQ))
\Rightarrow
\MHH_{h+t,w}(\RRR;\MMM).
\end{equation}
Here, 
$h$ is the homological grading on the torsion product and $(t,w)$ is the internal grading 
for the bigraded motivic homotopy groups in topological degree $t$ and weight $w$.
The differentials are of the form 
\[
d^{r}
\colon 
E^{r}_{\ast,\ast,\ast}\rightarrow E^{r}_{\ast-r,\ast+r-1,\ast}.
\]
If $\RRR$ is commutative and $\QQQ$ and $\MMM$ are commutative $\RRR$-algebras, then the Tor spectral sequence is a spectral sequence of $\RRR$-algebras.  The pinch map on the circle induces the Hopf-algebra structure on the torsion groups on the $E^2$ page. If $E^2$,\dots,$E^r$ are all flat over $\RRR_\star$ for $2\leq r$, then the $E^r$ page inherits an $\RRR$-Hopf algebra structure; in particular, the $r$-th differential $d^r\colon E^r\to E^r$ satisfies the ``co-Leibniz'' rule in the sense that it commutes with the coproduct $\psi\colon E^r\to E^r\otimes E^r$.
\end{proposition}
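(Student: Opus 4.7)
The plan is to realize $\MHH(\RRR;\MMM)\simeq \MMM\wedge_{\AAA}\QQQ$ as the geometric realization of the two-sided bar simplicial motivic spectrum $[s]\mapsto \MMM\wedge_\RRR\AAA^{\wedge_\RRR s}\wedge_\RRR\QQQ$ displayed just above the statement, and to filter by simplicial skeleta. Passing to the associated tower of cofiber sequences and applying $\pi_{\ast,\ast}$ produces an exact couple indexed by skeletal (homological) degree $h$ and the internal motivic bigrading $(t,w)$. The standard bookkeeping for such a filtered spectrum forces the differentials to take the stated form $d^r\colon E^r_{h,t,w}\to E^r_{h-r,t+r-1,w}$. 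Strong convergence is then obtained from exhaustiveness of the skeletal filtration together with the fact that, for each fixed bidegree $(h+t,w)$, only finitely many simplicial degrees can contribute, because the bar pieces become increasingly connected.

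To identify the $E^2$-page with $\TOR$, I would invoke the cellularity hypothesis on $\MMM$, $\RRR$, $\QQQ$. Cellularity is preserved under smash products, so every $\MMM\wedge_\RRR\AAA^{\wedge_\RRR s}\wedge_\RRR\QQQ$ is cellular, and the motivic K\"unneth theorem for cellular spectra \cite{MR2153114} identifies $E^1_{s,\star}$ with the classical bar complex
\[
\pi_\star\MMM\otimes_{\pi_\star\RRR}\pi_\star\AAA^{\otimes_{\pi_\star\RRR} s}\otimes_{\pi_\star\RRR}\pi_\star\QQQ,
\]
with $d^1$ the signed alternating sum of the face maps recalled in the text. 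Its homology is by definition $\TOR^{\pi_\star\AAA}_{h,t,w}(\pi_\star\MMM,\pi_\star\QQQ)$, yielding the claimed $E^2$-term.

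For the multiplicative assertions I would pass to the equivalent commutative-$\RRR$-algebra model $(\MMM\wedge_\RRR\QQQ)\otimes_\AAA(S^1\otimes\AAA)$ of \Cref{sec:compsimp}. The skeletal filtration now arises from the simplicial skeleta of $S^1$ itself; since the multiplication on $S^1\otimes\AAA$ is induced by the fold map $S^1\coprod S^1\to S^1$, this filtration is multiplicative, and the Eilenberg--Mac Lane shuffle formula identifies the induced product at the $E^1$-level. The Hopf structure on $E^2$ is obtained by transporting the pinch map $\nabla\colon dS^1\to S^1\vee S^1$ through the subdivision equivalence $dS^1\otimes\RRR\xrightarrow{\sim} S^1\otimes\RRR$, which at the bar-complex level recovers the standard shuffle coproduct. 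Under the flatness hypothesis on $E^2,\dots,E^r$ over $\RRR_\star$, a K\"unneth argument gives $E^r$ of the smash square as $E^r\otimes_{\RRR_\star}E^r$, so the coproduct descends page by page; the co-Leibniz relation for $d^r$ then follows because $\nabla$ is a map of filtered objects and $d^r$ is compatible with the filtration.

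The principal obstacle will be checking that the filtration-level product and coproduct arising geometrically from the fold and pinch maps on $S^1$ really agree, after the cellular K\"unneth identification, with the classical shuffle product and shuffle coproduct on the bar complex. This is standard in the topological setting but requires care here, because one must track the interaction between the cellular K\"unneth isomorphism and the simplicial decomposition of $S^1$, and because the flatness hypotheses on $E^2,\dots,E^r$ are exactly what is required to promote the Hopf-algebra structure and the co-Leibniz rule beyond the $E^2$-page.
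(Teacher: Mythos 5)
Your proposal is correct and takes essentially the same approach the paper relies on; the paper's own proof is just a chain of citations (the reassociation in equation (2.2), the cellular K\"unneth theorem of Dugger--Isaksen, and standard references for the bar-construction Tor spectral sequence and the Eilenberg--Mac\,Lane shuffle and pinch structures), and your argument unpacks exactly that chain. The one point worth flagging is that your identification of $E^1_{s,\star}$ with the algebraic bar complex via ``the motivic K\"unneth theorem'' quietly requires $\pi_\star\AAA$ to be flat over $\pi_\star\RRR$ (otherwise the K\"unneth spectral sequence for the smash powers does not collapse); this hypothesis is stated in the paragraph just before the proposition in the paper and holds in the paper's application, so you should make it explicit rather than leave it implicit.
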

\begin{proof}
This follows from (\ref{equation:mhh2}),
\cite[\S4]{zbMATH02221879}, 
\cite[Proposition 7.7]{MR2153114}, 
\cite[\S2]{zbMATH03730966}.
\end{proof}

The suspension map $\sigma\colon \RRR\smsh S^1\smsh\RRR\to S^1\otimes\RRR$ has a simple interpretation under 
the isomorphism
$$S^1\otimes\RRR\cong\{[s]\mapsto (\RRR\smsh_{\RRR}\RRR)\smsh(\RRR\smsh\RRR)^{\smsh_{\RRR}s}\}
$$
of the Tor-interpretation: it is the map from $S^1\smsh(\RRR\smsh\RRR)=\{[s]\mapsto\bigvee_{\{1,\dots,s\}}(\RRR\smsh\RRR)\}$ sending the $i$th summand to the inclusion on the $i$th factor (and units elsewhere).  In particular, if $x\in\pi_{d,w}(\RRR\smsh\RRR)$, then $\sigma_*x\in \pi_{d+1,w}(S^1\otimes\RRR)$ is the class represented by $[x]\in E^1_{1,d,w}$.
\vspace{0.1in}

The Hopkins-Morel equivalence shown by Hoyois \cite[Proposition 8.1]{MR3341470} implies the cellularity assumption 
in \Cref{prop:bss} holds for $\MMM=\RRR=\QQQ=\M\FF_{p}$ since the base scheme $F$ is a field of exponential 
characteristic $\cc(F)\neq p$.
In this case, 
we have the Tor spectral sequence
\begin{equation}
\label{equation:kssMFp}
E^{2}_{h,t,w}
=
\TOR^{\AAA_{\star}}_{h,t,w}(\MM_{\star},\MM_{\star})
\Rightarrow
\MHH_{h+t,w}(\M\FF_{p}).
\end{equation}

\begin{remark}
\label{remark:rigidity2}
By \Cref{remark:rigidity2} and \eqref{equation:kssMFp} it follows that, 
for algebraically closed fields, 
$\MHH_{\star}(\M\FF_{p})$ is independent of the exponential characteristic $\neq p$.
\end{remark}

\subsection{Torsion products}
For reference we record some basic facts about the structure of $\Tor$ in simple situations.
If $(\x_{i})$ is a basis for an $\FF_{p}$-vector space,
we denote the corresponding divided power, exterior, and symmetric algebras by $\GAMMA_{\FF_{p}}(\x_{i})$, 
$\LAMBDA_{\FF_{p}}(\x_{i})$, and $\SSS_{\FF_{p}}(\x_{i})$, 
respectively.
Let $\gamma_{n}\x$ denote the $n$th divided power of a class $\x$ in degree $d$.
Then the graded divided power algebra $\GAMMA_{\FF_{p}}(\x)$ is generated by elements $\gamma_{p^{j}}\x$ in degree 
$dp^{j}$ subject to the relations $\gamma_{0}\x=1$, 
$\gamma_{1}\x=\x$, and
\[ 
\gamma_{m}\x\cdot\gamma_{n}\x
=\left (\begin{array}{c} m+n \\ [1ex] m \end{array}\right)
\gamma_{m+n}\x.
\]

The symmetric algebra functor $\SSS_{\FF_{p}}(-)$ is left adjoint to the forgetful functor from 
$\FF_{p}$-algebras to $\FF_{p}$-modules, 
and the symmetric algebra $\SSS_{\FF_{p}}(\x_{i})$ is canonically isomorphic to a polynomial ring.
We let $\PPP^{h}_{\FF_{p}}(\x)$ denote the height $h$ truncated polynomial ring $\PPP_{\FF_{p}}(\x)/\x^{h}$.
With these definitions, 
an exercise in binomial coefficients shows there is an isomorphism of algebras
\begin{equation}
\label{equation:gammaisotruncated}
\GAMMA_{\FF_{p}}(\x)
=
\FF_{p}\{\gamma_{p^{j}}\x\vert j\geq 0\}
\cong
\bigotimes_{j\geq 0}\PPP^{p}_{\FF_{p}}(\gamma_{p^{j}}\x).
\end{equation}

We shall make repeatedly use of the following torsion product computations,
see \cite[\S6]{zbMATH03730966}.
\begin{lemma}
\label{lemma:torsionproducts}
\begin{enumerate}
\item[(i)]
For the symmetric algebra $\SSS_{\FF_{p}}(\x)$ on a generator $\x$ in even degree $d$,
there is an $\FF_{p}$-bialgebra isomorphism 
\begin{equation*}
\TOR^{\SSS_{\FF_{p}}(\x)}_{\ast}(\FF_{p},\FF_{p})
\cong
\LAMBDA_{\FF_{p}}(\sigma\x).
\end{equation*}
Here, 
$\sigma\x$ in degree $(1,d)$ is a coalgebra primitive represented in the bar complex by $[\x]$.
\vspace{0.1cm}
\item[(ii)] 
For the exterior algebra $\LAMBDA_{\FF_{p}}(\x)$ on a generator $\x$ in odd degree $d$,
there is an $\FF_{p}$-bialgebra isomorphism 
\begin{equation*}
\TOR^{\LAMBDA_{\FF_{p}}(\x)}_{\ast}(\FF_{p},\FF_{p})
\cong
\GAMMA_{\FF_{p}}(\sigma\x).
\end{equation*}
Here, $\gamma_j\sigma\x$ in degree $(j,dj)$ is represented in the bar complex by $[\x\vert\x\vert\dots\vert\x]$ and has coproduct
\begin{equation*}
\psi\gamma_{k}\sigma\x
=
\underset{i+j=k}{\sum}
\gamma_{i}\sigma\x
\otimes
\gamma_{j}\sigma\x.
\end{equation*}
\end{enumerate}  
\end{lemma}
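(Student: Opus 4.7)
The plan is to compute both torsion products via explicit small resolutions of $\FF_p$, read off the additive structure, and then promote to the bialgebra structure by identifying the canonical bar-complex representatives and computing shuffle products and the Eilenberg-Zilber coproduct.

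For part (i), since $\SSS_{\FF_p}(\x) = \FF_p[\x]$ is a polynomial ring on one generator, I would use the length-one Koszul resolution
\[
0 \longrightarrow \SSS_{\FF_p}(\x)\langle e\rangle \xrightarrow{\;e \mapsto \x\;} \SSS_{\FF_p}(\x) \longrightarrow \FF_p \longrightarrow 0,
\]
with $e$ in homological degree $1$ and internal degree $d$. Applying $\FF_p \otimes_{\SSS_{\FF_p}(\x)} -$ immediately gives $\TOR_*^{\SSS_{\FF_p}(\x)}(\FF_p,\FF_p) \cong \FF_p\{1,\sigma\x\}$ concentrated in homological degrees $0$ and $1$, with $\sigma\x$ corresponding to $[\x]$ in the bar complex under the standard comparison. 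Since $\TOR$ over a graded commutative algebra is graded commutative in the total (homological plus internal) degree and $\TOR$ vanishes in homological degrees $\geq 2$ by the resolution, one has $(\sigma\x)^2 = 0$, yielding the exterior algebra structure. The coproduct is forced by the Eilenberg-Zilber map: $\sigma\x = [\x]$ is a coalgebra primitive.

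For part (ii), the exterior algebra $\LAMBDA_{\FF_p}(\x)$ admits the infinite periodic resolution
\[
\cdots \xrightarrow{\cdot \x} \LAMBDA_{\FF_p}(\x)\langle \gamma_n \rangle \xrightarrow{\cdot \x} \cdots \xrightarrow{\cdot \x} \LAMBDA_{\FF_p}(\x)\langle \gamma_1 \rangle \xrightarrow{\cdot \x} \LAMBDA_{\FF_p}(\x) \longrightarrow \FF_p \longrightarrow 0,
\]
with $\gamma_n$ in homological degree $n$ and internal degree $nd$; this is exact because $\x$ annihilates $\x$. Applying $\FF_p \otimes_{\LAMBDA_{\FF_p}(\x)} -$ gives a free $\FF_p$-module on generators $\gamma_n \sigma\x$ in degree $(n, nd)$, matching $\GAMMA_{\FF_p}(\sigma\x)$ additively. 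To pin down the divided-power multiplicative structure, I would use the standard comparison map between this resolution and the bar resolution, under which $\gamma_n \sigma\x$ corresponds to $[\x|\x|\dots|\x]$ ($n$ factors). The shuffle product in the bar complex then gives $[\x|\dots|\x]_m \cdot [\x|\dots|\x]_n = \binom{m+n}{m}[\x|\dots|\x]_{m+n}$, which is exactly the defining relation of divided powers. The coalgebra structure comes from the Alexander-Whitney/Eilenberg-Zilber formula applied to $[\x|\dots|\x]$, producing the deconcatenation coproduct $\psi\gamma_k\sigma\x = \sum_{i+j=k}\gamma_i\sigma\x \otimes \gamma_j\sigma\x$.

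The main technical point is keeping track of Koszul signs, especially at $p=2$ for part (i) where graded commutativity alone does not force the square of an odd-degree element to vanish; here the upper bound on homological degree from the Koszul resolution is what closes the argument. In part (ii), the combinatorics of shuffles producing precisely $\binom{m+n}{m}$ requires some care to extract from the explicit formula, but no deeper obstacle arises. Both computations are standard consequences of Koszul duality between $\SSS$ and $\LAMBDA$ (and $\LAMBDA$ and $\GAMMA$), so one may alternatively cite \cite{zbMATH03730966} for the bialgebra identifications.
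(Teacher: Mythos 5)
Your proposal is correct. The paper does not actually supply a proof of \Cref{lemma:torsionproducts}; it simply refers the reader to the cited source, which records exactly this kind of standard computation. Your argument is the canonical one: the length-one Koszul resolution of $\FF_p$ over $\SSS_{\FF_p}(\x)$ and the periodic $\x$-multiplication resolution over $\LAMBDA_{\FF_p}(\x)$ give the additive answer immediately, and the identification of generators in the bar complex together with the shuffle product and deconcatenation coproduct gives the bialgebra structure. You correctly flag the one point that is not purely formal: in part (i), at $p=2$ the element $\sigma\x$ has even total degree, so graded commutativity alone does not yield $(\sigma\x)^2 = 0$; it is the vanishing of $\Tor$ in homological degree $\geq 2$, read off from the Koszul resolution, that forces the exterior relation. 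One minor detail worth making explicit in part (ii): since $\x$ has odd internal degree $d$, the bar element $[\x]$ has even total degree $d+1$, so the Koszul signs in the shuffle sum are all $+1$ and the count of $(m,n)$-shuffles cleanly produces the binomial coefficient $\binom{m+n}{m}$. With that observation in place, the divided-power relation follows exactly as you say, and the deconcatenation coproduct gives the stated comultiplication. So your proof fills in the details that the paper leaves to the citation, and does so correctly.
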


\begin{remark}
  \label{rem:firstdiff}
  As an example, let us reconstruct a direct proof of the equation 
  $\tau\sigma\xi_{i+1}=\rho\sigma\tau_{i+1}\in\MHH_{\star}(\FF_2)$ of \Cref{lem:extension}.  Consider the $E^1$ page
  $$E^1_{s,\star}=\AAA_\star^{\otimes_{\MM_\star}s}\cong\MM_\star\otimes_{\MM_\star}\AAA_\star^{\otimes_{\MM_\star}s}\otimes_{\MM_\star}\MM_{\star}
  $$
  of the spectral sequence for $\MHH(\FF_2)$, where $\AAA_\star=\pi_\star(\M\FF_2\smsh\M\FF_2)$ is the dual Steenrod algebra.  Then
  $$d^1[\tau_i|\tau_i]=[\tau^2_i]=\tau[\xi_{i+1}]+\rho[\tau_{i+1}]+\rho[\tau_0\xi_{i+1}]$$
  and $d^1[\tau_0|\xi_{i+1}]=[\tau_0\xi_{i+1}]$, so that $\tau[\xi_{i+1}]+\rho[\tau_{i+1}]$ is a boundary.
  Hence $\tau\sigma\xi_{i+1}=\rho\sigma\tau_{i+1}$.

  With the notation $\gamma_j\sigma\tau_i=[\tau_i|\dots|\tau_i]\in E^1_{j,\star}$ and $\sigma x=[x]$ the shuffle product yields an explicit formula for the $d^{1}$ differentials
  \begin{align*}
        d^1\gamma_{j+2}\sigma\tau_i&=\sum_{a=1}^{j-1}[\tau_i|\dots|\tau_i^2|\dots|\tau_i]=
                                     [\tau_i^2]\cdot[\tau_i\dots|\tau_i]=(\tau[\xi_{i+1}]+\rho[\tau_{i+1}]+\rho[\tau_0\xi_{i+1}])\cdot[\tau_i\dots|\tau_i]\\
    &=(\tau\sigma\xi_{i+1}+\rho\sigma\tau_{i+1}+\rho\sigma(\tau_0\xi_{i+1}))\cdot\gamma_j\sigma\tau_i.
  \end{align*}
  When the ground field contains a square root of $-1$, so that $\rho=0$, we get the formula
  $$d^1\gamma_{j+2}\sigma\tau_i=\tau\sigma\xi_{i+1}\cdot\gamma_j\sigma\tau_i.$$

     Conversely, for odd primes $p$ we can use \Cref{lem:extension} to \emph{deduce} differentials by a simple weight count --- simplifying the corresponding topological argument.
     \Cref{lemma:torsionproducts} tells us that
     $$E^2=\MM_\star\otimes_{\FF_p}
     \bigotimes_{i\geq}\Lambda_{\FF_{p}}(\sigma\xi_{i+1})\otimes\Gamma_{\FF_{p}}(\sigma\tau_i).$$
     We know that $\tau^{p-1}\sigma\xi_{i+1}$ has to be hit by a differential.  When the ground field is algebraically closed, $\MM_\star=\FF_p[\tau]$ with $\tau\in\MM_{0,-1}$.  In this case the source of the differential hitting $\tau^{p-1}\sigma\xi_{i+1}$ must come from linear combinations of monomials in $\sigma\xi_{i}$s and $\gamma_j\sigma\tau_{i}$s of total degree  $2p^{i+1}$ and weight at least $p^{i+1}-p$.  A quick count shows that the only monomial with sufficient weight is $\gamma_p\sigma\tau_i$ and so  we have the equation 
     (described up to a unit in $\FF_p$)
     $$d^{p-1}\gamma_p\sigma\tau_i\overset{\cdot}=\tau^{p-1}\sigma\xi_{i+1}.$$
   \end{remark}

\subsection{A Bockstein type complex}
\label{sec:Dcomplex}
We end the section by doing an entirely algebraic exercise which will be needed later on.  
Let $p$ be any prime and consider the commutative differential graded $\FF_p$-algebra $(C,D)$, where
$$C=\bigotimes_{i\geq 0}\GAMMA_{\FF_{p}}(\bmu_{i})
\otimes 
\LAMBDA_{\FF_{p}}(\blambda_{i+1})$$
and $D\colon C\to C$ is the derivation generated by $D(\gamma_{j+p}\bmu_i)=\blambda_{i+1}\gamma_j\bmu_i$.
As before, 
$B^D=\im D$, $Z^D=\ker D$ and $H^D=Z^D/B^D$ --- the aim of this subsection is to calculate these.
In the application $C$ will be the mod-$\tau^{p-1}$ motivic Hochschild homology of $\FF_p$ 
(the reader may recognize it as $\Tor^{\AAA^{\mathrm{rig}}_\star}_*(\FF_p,\FF_p)$) and $D$ will be derived from a Bockstein.
\vspace{0.1in}

We first fix some notation.
For each non-empty finite set of natural numbers $S\subseteq\NN$,
we choose an element $t_S\in S$ with the property that $t_{S\cup T}\in\{t_S,t_T\}$.  
The minimum, $t_S=\min S$, is a good choice, but there are many others.  
Down the road, such a choice amounts to a particular choice of basis, 
and there is no reason to prefer one over the other, 
except that in concrete examples, 
some can be more convenient.  
If the function $f\colon\NN\circlearrowleft$ has finite non-empty support, 
we write $t_f=t_{\supp f}$.
For every $j\in\NN$, 
let $\delta_j\colon\NN\circlearrowleft$ be the function with $\supp\,\delta_j=\{j\}$ and $\delta_j(j)=1$.

\begin{defn}
\label{def:JandK}
Let $J$ denote the set of pairs $(S,f)$, 
where the function $f\colon \NN\circlearrowleft$ has finite support and $S\subseteq\supp f$.  
The subset $K\subseteq J$ consists of the pairs $(S,f)$, 
where the support of $f$ is non-empty and $S$ does not contain $t_f$.
\end{defn}

\begin{defn}
For $(S,f)\in J$, 
we set
\begin{equation}
\label{equation:modtausubcomplex}
\chi_{S,f}
:=
\left(\prod_{m\in S}\blambda_{m+1}\gamma_{pf(m)-p}\bmu_m\right)
\left(\prod_{n\not\in S}\gamma_{pf(n)}\bmu_n\right)
\in C.
\end{equation}
\end{defn}
In particular, 
$\chi_{\emptyset,0}=1$, 
$\chi_{\emptyset,p^j\delta_n}=\gamma_{p^{j+1}}\bmu_n$ and $\chi_{\{m\},\delta_m}=\lambda_{m+1}$.

We note that
$$D\chi_{S,f}=\sum_{n\in\supp(f)-S}\chi_{S\cup\{n\},f}$$ 
since $D\gamma_n\bmu_{i}=\blambda_{i+1}\gamma_{n-p}\bmu_{i}$.
Next we construct sub-complexes of $(C,D)$.

\begin{defn}
  If $f\colon \NN\circlearrowleft$ has finite support, 
the \emph{associated $f$-cube} is the sub-complex
$$
(C^f,D)
:=
(\bigoplus_{S\subseteq\supp f}
\FF_p\{\chi_{S,f}\},D)
\subseteq (C,D).
$$
If $f=0$, then $C^f=\FF_{p}$ . 
Furthermore, let $Z^f=\ker D\cap C^f$, $B^D=\im D\cap C^f$ and $H^f=Z^f/B^f$.
\end{defn}

Note that if $f=0$, then $Z^f=H^f=\FF_p$.  
Recall the number $t_f\in\supp f$ chosen once and for all (whenever $\supp f$ is non-empty) just before 
Definition~\ref{def:JandK}.
\begin{lemma}
\label{lem:gamma2lambdaacyclic}
  If $f\colon \NN\circlearrowleft$ has finite nonempty support,  then $(C^f,D)$ is contractible so that $H^f=0$.  
  Furthermore, $B^f=Z^f$ is generated by the $D\chi_{S,f}$ with $t_f\notin S\subseteq \supp f$.
\end{lemma}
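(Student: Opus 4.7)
The plan is to recognize $(C^f, D)$ as a tensor product of contractible two-term complexes, which yields $H^f = 0$ immediately, and then to use an explicit contracting homotopy to pin down a basis for $B^f = Z^f$. For each $n \in T := \supp f$ let $(C^{f,n}, D)$ denote the two-term subcomplex with $\FF_p$-basis
\[
\{\gamma_{pf(n)}\bmu_n,\; \blambda_{n+1}\gamma_{pf(n)-p}\bmu_n\}
\]
on which $D$ acts as the evident isomorphism. The factorization $\chi_{S,f} = \prod_{n \in T} a_n(S)$, with $a_n(S)$ the second generator if $n \in S$ and the first otherwise, presents $(C^f, D)$ as $\bigotimes_{n \in T}(C^{f,n}, D)$: contributions to $D\chi_{S,f}$ from differentiating factors at indices already in $S$ vanish because $\blambda_{n+1}^2 = 0$, and the surviving terms reassemble into the tensor product differential with Koszul signs. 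Each tensor factor is contractible since its differential is invertible, so $(C^f, D)$ is contractible and $H^f = 0$.

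To identify generators of $B^f = Z^f$, fix $t := t_f \in T$ and define $h \colon C^f \to C^f$ on basis elements by $h(\chi_{S,f}) = 0$ if $t \notin S$ and $h(\chi_{S,f}) = (-1)^{|\{m \in S : m < t\}|}\chi_{S \setminus \{t\}, f}$ if $t \in S$. A case analysis verifies $Dh + hD = \mathrm{id}_{C^f}$. Decomposing $C^f = C^f_0 \oplus C^f_1$ by absence or presence of $t_f$ in $S$, the vanishing $h|_{C^f_0} = 0$ combined with $Dh + hD = \mathrm{id}$ gives $hD = \mathrm{id}$ on $C^f_0$, so $D|_{C^f_0}$ is injective; conversely any cycle $c$ satisfies $c = D(hc) \in D(C^f_0)$. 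Hence $B^f = Z^f = D(C^f_0)$ has basis $\{D\chi_{S,f} : t_f \notin S \subseteq \supp f\}$.

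The main technical obstacle is tracking Koszul signs in the identity $Dh + hD = \mathrm{id}$, particularly in the case $t \in S$, where off-diagonal terms $\chi_{(S \setminus \{t\}) \cup \{n\}, f}$ receive contributions from both $Dh$ and $hD$. These cancel in pairs because, for $n \neq t$, exactly one of $n < t$ and $t < n$ holds, making the two Koszul sign parities opposite. Once this single cancellation is recorded, the identity follows by direct substitution, and the remaining case $t \notin S$ is immediate.
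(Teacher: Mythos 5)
Your proof is correct, but it follows a genuinely different route than the paper's.  You identify $(C^f,D)$ as the tensor product over $n\in\supp f$ of the two-term complexes $\FF_p\{\gamma_{pf(n)}\bmu_n\}\xrightarrow{\ \cong\ }\FF_p\{\blambda_{n+1}\gamma_{pf(n)-p}\bmu_n\}$, each contractible because $D$ is an isomorphism there (and the derivation applied to the already-differentiated factor vanishes because $\blambda_{n+1}^2=0$ and $D\blambda_{n+1}=0$).  Acyclicity of $(C^f,D)$ is then immediate from K\"unneth, and the explicit contracting homotopy $h=s_{t}\otimes\mathrm{id}$, contracting the $t=t_f$ factor, recovers the basis statement: $h$ vanishes on $C^f_0$ (absence of $t_f$), so $hD=\mathrm{id}$ there gives injectivity of $D|_{C^f_0}$, while $c=Dhc$ for any cycle gives $Z^f=D(C^f_0)$.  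The paper instead runs a lean dimension count: filter by $|S|$, observe that the family $\{D\chi_{S,f}: t_f\notin S,\ |S|=k\}$ is linearly independent because each has a distinguished $\chi_{S\cup\{t_f\},f}$-component, hence $\operatorname{rank}(D|_{C^f_k})\geq\binom{N-1}{k}$, and conclude $\dim H_k\leq\binom{N}{k}-\binom{N-1}{k}-\binom{N-1}{k-1}=0$.  Your decomposition explains structurally \emph{why} the cube is acyclic (it is a Koszul-type complex on $|\supp f|$ invertible two-term pieces) and hands over a contracting homotopy for free; the paper's version avoids any engagement with Koszul signs in the homotopy identity, which your write-up asserts by ``a case analysis'' rather than proving.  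Both are valid, and the extra information in your approach is not used downstream, so neither is objectively better.  One cosmetic note: your $(C^{f,n},D)$ is not literally a \emph{subcomplex} of $C^f$ when $|\supp f|>1$; it is a tensor factor, and the claimed factorization is an isomorphism of complexes, not a subcomplex inclusion.
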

\begin{proof}
For $N=|\supp f|$ and $k=0,\dots,N$, 
let $C^f_{k}\subseteq C^f$ be the span of the $\chi_{S,f}$ with $k=|S|$.  
From the formula $D\chi_{S,f}=\sum_{n\in\supp(f)-S}\chi_{S\cup\{n\},f}$ we see that the differential restricts 
to a chain complex
$$
(C^f,{D})=\left\{
\xymatrix{C^f_{0}\ar[r]^{D} & 
C^f_{1}\ar[r]^{D} & 
\dots\ar[r]^-{D}&C^f_{N-1}\ar[r]^-{D} &
C^f_{N}}\right\}.
$$
Here the $\FF_{p}$-vector space $C^f_{k}$ is of dimension $\binom Nk$ with basis elements $\chi_{S,f}$, 
where $|S|=k$.
Note that the set 
$$\{{D}\chi_{S,f}\mid k=|S|, t_f\notin S\}$$ is linearly independent because only ${D}\chi_{S,f}$ 
has a nontrivial $\chi_{\{t_f\}\cup S,f}$-coefficient.  
Hence the rank of ${D}\colon C^f_{k}\to C^f_{k+1}$ is at least $\binom{N-1}k$, 
and we deduce that
\begin{align*}
\dim H_k(C^f,{D}) & =
\dim\ker\{{D}\colon C^f_{k}\to C^f_{k+1}\}-\dim\im\{{D}\colon C^f_{k-1}\to C^f_{k}\}\\
& \leq \binom Nk-\binom{N-1}k-\binom{N-1}{k-1}=0
\end{align*}
and so $B^f=Z^f$ is generated by the $D\chi_{S,f}$ with $t_f\notin S\subseteq \supp f$, as claimed.
The calculation works when $k=0$ or $k=N$ 
(but not for $N=0$ since then we cannot choose $t_f$).
\end{proof}

We analyze the multiplicative structure.

\begin{defn}\label{def:somenumbers}
For functions $f,g\colon \NN\circlearrowleft$ with finite support and non-empty finite sets $S,T\subseteq\NN$ 
define $K_{S,T,f,g}\in\FF_{p}$ by
$$
K_{S,T,f,g}
=
\left(\prod_{s\in S}\binom{fs-1+gs}{fs-1}\right)
\left(\prod_{t\in T}\binom{ft+gt-1}{ft}\right)
\left(\prod_{c\notin S\cup T}\binom{fc+gc}{fc}\right)
$$
if $(S,f),(T,g)\in J$ and $S\cap T=\emptyset$, and set $K_{S,T,f,g}=0$ otherwise.
Moreover, 
we define 
$$
\epsilon_{u,S,T,f,g}
=
K_{S\cup\{u\},T\cup\{t_{f+g}\},f+g}
+
K_{S\cup\{t_{f+g}\},T\cup\{u\},f+g}.
$$
\end{defn}
Note that when $(S,f),(T,g)\in J$ and $S\cap T=\emptyset$, 
each factor in the formula $K_{S,T,f,g}$ is $1$ unless the index is in 
$\supp \, f\cap\supp \, g=\supp (f\cdot g)$, 
and so we can restrict to these factors to simplify the calculation.  
We will need $\epsilon_{u,S,T,f,g}$ only in the case when $u\in\supp(f+g)$, 
$u\not=t_{f+g}$ and $u\notin S\cup T$.
\vspace{0.1in}

The following lemma, 
a consequence of the defining relations among divided power generators of $C$, 
explains the relevance of these numbers.
\begin{lemma}
  \label{lem:chismultiply}
  For $(S,f),(T,g)\in J$ we have
$$
\chi_{S,f}\chi_{T,g}=K_{S,T,f,g}\cdot\chi_{S\cup T,f+g}
$$
and if $(S,f),(T,g)\in K$, then 
$$
D\chi_{S,f}\cdot D\chi_{T,g}
=
\sum_{t_{f+g}\not=u\in\supp(f+g)-S\cup T}\epsilon_u\cdot D\chi_{S\cup T\cup\{u\},f+g}.
$$\hfill\qedsymbol
\end{lemma}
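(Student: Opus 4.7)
The plan is to unpack the multiplicative structure of the commutative differential graded $\FF_p$-algebra $C$ and expand both sides of each identity directly.

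For the first equality, the starting observation is that if $S\cap T\neq\emptyset$ then for any $c\in S\cap T$ the product $\chi_{S,f}\cdot\chi_{T,g}$ contains a factor $\blambda_{c+1}^2=0$, so it vanishes, matching the convention $K_{S,T,f,g}=0$. Assuming $S\cap T=\emptyset$, one rewrites $\chi_{S,f}$ and $\chi_{T,g}$ in normal form with all exterior generators collected to the left (using that each $\gamma_n\bmu_i$ is even and hence central against the odd $\blambda_{j+1}$), and then groups the divided-power factors by each index $c\in\NN$. Applying the divided-power relation $\gamma_a\bmu_c\cdot\gamma_b\bmu_c=\binom{a+b}{a}\gamma_{a+b}\bmu_c$ produces one binomial per index, and Lucas's theorem reduces those with $p$-divisible entries modulo $p$ to the three cases appearing in $K_{S,T,f,g}$: for $c\in S$, $\binom{pf(c)-p+pg(c)}{pf(c)-p}\equiv\binom{f(c)-1+g(c)}{f(c)-1}\pmod p$; for $c\in T$, the symmetric formula; and for $c\notin S\cup T$, $\binom{pf(c)+pg(c)}{pf(c)}\equiv\binom{f(c)+g(c)}{f(c)}\pmod p$.

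For the second equality, the derivation formula $D\chi_{S,f}=\sum_{n\in\supp f-S}\chi_{S\cup\{n\},f}$ and its analogue for $(T,g)$ yield
\[
D\chi_{S,f}\cdot D\chi_{T,g}=\sum_{n\in\supp f-S}\sum_{m\in\supp g-T}\chi_{S\cup\{n\},f}\cdot\chi_{T\cup\{m\},g}.
\]
Substituting the first equality into each summand and regrouping by the unordered pair $\{n,m\}\subseteq\supp(f+g)-(S\cup T)$ gives the coefficient of $\chi_{S\cup T\cup\{n,m\},f+g}$ as $K_{S\cup\{n\},T\cup\{m\},f,g}+K_{S\cup\{m\},T\cup\{n\},f,g}$. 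Separately, expanding the proposed right-hand side via $D\chi_{S\cup T\cup\{u\},f+g}=\sum_v\chi_{S\cup T\cup\{u,v\},f+g}$ gives coefficient $[n\neq t_{f+g}]\,\epsilon_n+[m\neq t_{f+g}]\,\epsilon_m$ for the same monomial. Matching the two expressions as elements of $\FF_p$ is precisely what the symmetrized \Cref{def:somenumbers} is engineered to achieve.

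The hard part will be the combinatorial bookkeeping around the distinguished element $t_{f+g}$. When $t_{f+g}\in\{n,m\}$, exactly one of the two $\epsilon$-terms on the right survives, and by its symmetrized definition it directly equals the left-hand coefficient. When $t_{f+g}\notin\{n,m\}$, both $\epsilon_n$ and $\epsilon_m$ contribute, and the resulting four-term sum must collapse to the two-term left-hand expression; here one uses the hypotheses $(S,f),(T,g)\in K$, the compatibility $t_{S\cup T}\in\{t_S,t_T\}$, and the automatic vanishing of those $K$-factors whose support conditions are violated (noting that $\binom{f(n)-1+g(n)}{f(n)-1}=0$ when $f(n)=0$). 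The Lucas reductions and sign bookkeeping for odd $p$, arising from the anticommutativity of the $\blambda$-generators, are routine but must be applied uniformly across many cases.
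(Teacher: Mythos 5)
Your proof of the first identity is sound: the divided-power rule $\gamma_a\bmu_c\cdot\gamma_b\bmu_c=\binom{a+b}{a}\gamma_{a+b}\bmu_c$ together with Lucas reducing $\binom{pA}{pB}$ to $\binom{A}{B}$ and the $\blambda^2=0$ vanishing when $S\cap T\neq\emptyset$ is exactly what is needed (the paper offers no proof, calling it a consequence of the defining relations). For odd $p$ you do need to be more careful than you let on with the ordering convention on $\prod_{m\in S}\blambda_{m+1}$ — merging the sorted sets $S$ and $T$ into $S\cup T$ contributes a sign of the merge permutation that the paper's unsigned $K_{S,T,f,g}$ does not display — but that is a bookkeeping point.

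The genuine gap is in your proof of the second identity. You expand both sides and try to match coefficients of $\chi_{S\cup T\cup\{n,m\},f+g}$ for \emph{every} unordered pair $\{n,m\}$. When $t_{f+g}\in\{n,m\}$ the definition of $\epsilon_u$ is indeed set up so the coefficients match tautologically. But when $t_{f+g}\notin\{n,m\}$ the required identity
\[
K_{S\cup\{n\},T\cup\{m\},f,g}+K_{S\cup\{m\},T\cup\{n\},f,g}
\;\overset{?}{=}\;\epsilon_n+\epsilon_m
\]
is not a formal consequence of Pascal's rule $\binom{A-1+B}{A-1}+\binom{A+B-1}{A}=\binom{A+B}{A}$. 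Writing $a_c,b_c,w_c$ for the three types of local binomial factors and using $a_c+b_c=w_c$, the difference between the two sides reduces to $2(a_m a_n b_{t_{f+g}}+a_{t_{f+g}} b_m b_n)$ times the common factor, which vanishes only when $p=2$. For odd $p$ this residual is absorbed by the signs you deferred, and the correct combinatorics is far from ``routine.'' You assert the four-term sum ``must collapse'' because ``the symmetrized \Cref{def:somenumbers} is engineered to achieve'' it, but that is the conclusion, not a proof.

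What actually closes the argument is the structural fact you do not invoke. By the first identity, $D\chi_{S,f}\cdot D\chi_{T,g}$ lies in $C^{f+g}$, and since $D$ is a square-zero derivation it equals $D(\chi_{S,f}\cdot D\chi_{T,g})$, hence lies in $B^{f+g}$. By \Cref{lem:gamma2lambdaacyclic}, $B^{f+g}$ in the relevant homogeneous degree is spanned by the linearly independent elements $D\chi_{S',f+g}$ with $t_{f+g}\notin S'$, and an element of $B^{f+g}$ is therefore uniquely determined by the coefficients of the $\chi_{S',f+g}$ with $t_{f+g}\in S'$, because $\chi_{S'\cup\{t_{f+g}\},f+g}$ appears (with coefficient $1$) only in $D\chi_{S',f+g}$. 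Reading off those determining coefficients from the expansion you already wrote down gives exactly $\epsilon_u$, so the two sides agree on the determining coefficients and hence everywhere. This sidesteps the problematic pairs $\{n,m\}\not\ni t_{f+g}$ entirely and is where the hypothesis $(S,f),(T,g)\in K$ and the choice of $t_{f+g}$ actually earn their keep. Without this observation your argument does not go through.
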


\begin{lemma}
\label{lem:fracturebyfs}
The multiplication gives an extra grading indexed by the generators of the commutative differential 
graded sub-algebra $\bigoplus_fC^f\subseteq C$.
In particular, if $f,g\colon \NN\circlearrowleft$ there is a commutative diagram 
$$
\xymatrix{ 
C^f\otimes C^g \ar[r] \ar[d] & 
C\otimes C \ar[d] \\
C^{f+g}\ar[r] & \,C. }
$$
Here the rows are given by the evident inclusion and the columns by multiplication.
The resulting algebra inclusions
$$
\bigoplus_{f\colon \NN\circlearrowleft}C^f\subseteq C
$$ 
and 
$$
{\FF_{p}}[\bmu_{i}]/\bmu_i^p\subseteq C
$$ 
induce isomorphisms of graded commutative $\FF_{p}$-algebras
$$
\bigoplus_{f\colon \NN\circlearrowleft} C^f
\cong 
\bigotimes_{i\geq 0} 
\left(\LAMBDA_{\FF_{p}}(\blambda_{i+1})
\otimes 
\bigotimes_{j> 0}{\FF_{p}}[\gamma_{p^{j}}\bmu_{i}]/(\gamma_{p^{j}}\bmu_{i})^p\right)
$$
and
$$
C
\cong
\left(\bigotimes_{i\geq 0}{\FF_{p}}[\bmu_{i}]/\bmu_{i}^p\right)
\otimes
\left(\bigoplus_{f\colon \NN\circlearrowleft}C^f\right).
$$
\end{lemma}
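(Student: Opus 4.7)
The plan is to first check that multiplication really does carry $C^f\otimes C^g$ into $C^{f+g}$, and then to exhibit an explicit bijection between the basis $\{\chi_{S,f}\mid(S,f)\in J\}$ of $\bigoplus_f C^f$ and a monomial basis of the claimed tensor-product algebra.

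First I would invoke \Cref{lem:chismultiply}: since $\chi_{S,f}\chi_{T,g}=K_{S,T,f,g}\chi_{S\cup T,f+g}$, the product of any element of $C^f$ with any element of $C^g$ lies in $C^{f+g}$, which gives the commutative square and shows that $C\mapsto\bigoplus_f C^f$ is a sub-algebra graded by $f\colon\NN\circlearrowleft$. The product on $\bigoplus_f C^f$ is then determined by this formula on the $\chi_{S,f}$.

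Next I would pass to a more convenient basis of $\GAMMA_{\FF_p}(\bmu_i)$. By \eqref{equation:gammaisotruncated}, $\GAMMA_{\FF_p}(\bmu_i)\cong\bigotimes_{j\geq 0}\FF_p[\gamma_{p^j}\bmu_i]/(\gamma_{p^j}\bmu_i)^p$, and using Lucas' theorem the sub-vector space spanned by $\gamma_{pk}\bmu_i$ for $k\geq 0$ coincides with the sub-algebra $\bigotimes_{j>0}\FF_p[\gamma_{p^j}\bmu_i]/(\gamma_{p^j}\bmu_i)^p$ (the elements with no $j=0$ factor). Given a pair $(S,f)\in J$, define $k\colon\NN\to\NN$ by $k(n)=f(n)$ for $n\notin S$ and $k(m)=f(m)-1$ for $m\in S$; conversely given a finite $S\subset\NN$ and a finitely supported $k\colon\NN\to\NN$, set $f(n)=k(n)+[n\in S]$ so that $S\subseteq\supp f$. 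This is a bijection between $J$ and pairs (finite subset $S$, finitely supported $k$), and under the correspondence $\chi_{S,f}$ is precisely
\[
\Bigl(\prod_{m\in S}\blambda_{m+1}\Bigr)\cdot\prod_{i}\gamma_{pk(i)}\bmu_i,
\]
which is the corresponding monomial basis element of $\bigotimes_{i\geq 0}\bigl(\LAMBDA_{\FF_p}(\blambda_{i+1})\otimes\bigotimes_{j>0}\FF_p[\gamma_{p^j}\bmu_i]/(\gamma_{p^j}\bmu_i)^p\bigr)$.

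Finally, I would verify that this bijection is an algebra map by comparing the multiplication on basis elements on both sides; on the right this is obvious from the product structure in exterior and truncated polynomial algebras, while on the left it is exactly \Cref{lem:chismultiply} (the combinatorial coefficients $K_{S,T,f,g}$ collapse to $0$ or $1$ on basis monomials because $0\leq k(i)<\infty$ decomposes uniquely in base-$p$ digits independent of whether the index lies in $S$ or $T$). For the second isomorphism, factoring the $j=0$ piece out of each $\GAMMA_{\FF_p}(\bmu_i)$ yields
\[
C\cong\bigotimes_{i\geq 0}\FF_p[\bmu_i]/\bmu_i^p\otimes\bigotimes_{i\geq 0}\Bigl(\LAMBDA_{\FF_p}(\blambda_{i+1})\otimes\bigotimes_{j>0}\FF_p[\gamma_{p^j}\bmu_i]/(\gamma_{p^j}\bmu_i)^p\Bigr),
\]
and the second factor is $\bigoplus_f C^f$ by the first isomorphism. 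The main subtlety, and the only place one has to be careful, is the combinatorial bookkeeping showing the basis $\{\chi_{S,f}\}$ matches the monomial basis; once the bijection on bases is in hand the algebra statement is immediate.
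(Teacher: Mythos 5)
Your proof is correct and follows essentially the same route as the paper's terse one-line argument, only spelled out in detail: the bijection $(S,f)\leftrightarrow(S,k)$ together with \eqref{equation:gammaisotruncated} and Lucas identifies the span of the $\gamma_{pk}\bmu_i$ with $\bigotimes_{j>0}\FF_p[\gamma_{p^j}\bmu_i]/(\gamma_{p^j}\bmu_i)^p$. One correction: the parenthetical claiming $K_{S,T,f,g}$ collapses to $0$ or $1$ is not right as stated --- it is a product of binomial coefficients and can take any value in $\FF_p$ (e.g.\ the factor $\binom{fc+gc}{fc}=\binom{2}{1}=2$ for $p=5$ with $f(c)=g(c)=1$ at some $c\notin S\cup T$); the step is still sound because what you actually need, and have, is that each $\chi_{S,f}$ is a \emph{unit} multiple of the corresponding monomial in the $\blambda_{i+1}$ and the $\gamma_{p^j}\bmu_i$, so $\bigoplus_f C^f$ and the tensor-product sub-algebra occupy the same subspace of $C$ and therefore inherit the same multiplication, with no separate verification required.
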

\begin{proof}
The multiplicative structure follows from Lemma~\ref{lem:chismultiply}, 
and the last two isomorphisms follow from the fact that a monomial in $C$ does not have any 
$\bmu_i$-factors of the form $\chi_{S,f}$.
 \end{proof}

 \begin{corollary}
   \label{cor:HDandZD}
   As an $\FF_p$-algebra,
   $$H^D\cong\bigotimes_{i\geq0}\FF_{p}[\bmu_{i}]/\bmu_i^p
$$ 
and $Z^D$ is the subalgebra of $C$ generated by the $\bmu_i$ with $i\geq 0$ and the $D\chi_{(S,f)}$ with $t_f\notin S\subseteq \supp f$.  More explicitly, and writing $\x_{S,f}=D\chi_{(S,f)}$,  the relation expressed in Lemma~\ref{lem:chismultiply} gives an isomorphism
$$
Z^D \cong 
\FF_p[\bmu_{i},\x_{S,f}]_{i\in\NN, (S,f)\in K}/(\bmu_{i}^p,\x_{S,f}
\cdot
\x_{T,g}-\sum_{u}\epsilon_u\cdot\x_{S\cup T\cup\{u\},f+g}).
$$
Here, 
$u\in\supp(f+g)-S\cup T$ and $u\neq t_{f+g}$.
\end{corollary}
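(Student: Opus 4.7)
My plan is to compute $Z^D$ and $H^D$ by combining the tensor decomposition of \Cref{lem:fracturebyfs} with the acyclicity result of \Cref{lem:gamma2lambdaacyclic}, then identify $Z^D$ with the claimed presentation via an explicit algebra map whose injectivity reduces to a normal-form argument.

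First I would apply \Cref{lem:fracturebyfs} to write $C \cong A \otimes_{\FF_p} \bigoplus_{f\colon \NN\circlearrowleft} C^f$ with $A = \bigotimes_{i\geq 0}\FF_p[\bmu_i]/\bmu_i^p$. The defining formula $D(\gamma_{j+p}\bmu_i) = \blambda_{i+1}\gamma_j\bmu_i$ requires $j\geq 0$, so $D(\bmu_i) = D(\gamma_{p^0}\bmu_i) = 0$ and $D$ vanishes on $A$. On the other factor $D$ preserves the $f$-grading since $D\chi_{S,f} \in C^f$. Working over a field, the Künneth formula yields
$$Z^D \cong A \otimes \Bigl(\FF_p \oplus \bigoplus_{f\neq 0} Z^f\Bigr), \qquad H^D \cong A \otimes \bigoplus_f H^f.$$
Invoking \Cref{lem:gamma2lambdaacyclic} gives $H^f = 0$ for every $f \neq 0$ (and $H^0 = \FF_p$) together with $Z^f = B^f$ spanned by the linearly independent set $\{D\chi_{S,f} : (S,f) \in K\}$. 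This immediately establishes the first assertion $H^D \cong A = \bigotimes_{i\geq 0}\FF_p[\bmu_i]/\bmu_i^p$.

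For the presentation of $Z^D$, I would define the $\FF_p$-algebra homomorphism
$$\Phi \colon \FF_p[\bmu_i,\x_{S,f}]_{i\in\NN,\,(S,f)\in K}/\mathcal{J} \longrightarrow Z^D$$
by $\bmu_i \mapsto \bmu_i$ and $\x_{S,f}\mapsto D\chi_{S,f}$, where $\mathcal{J}$ is the ideal listed in the statement. Well-definedness is exactly $\bmu_i^p = 0$ in $A$ together with the multiplicative formula of \Cref{lem:chismultiply}; with the canonical choice $t_S = \min S$ one verifies $t_{f+g} = \min(t_f,t_g) \notin S\cup T$ whenever $(S,f),(T,g) \in K$, so each index $(S\cup T\cup\{u\},f+g)$ appearing on the right-hand side of the $\x\x$-relation genuinely lies in $K$. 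Surjectivity is then immediate from the vector-space description of $Z^D$, since every basis element is either $\bmu^\alpha$ or $\bmu^\alpha\cdot D\chi_{S,f}$, both in the image.

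For injectivity I would argue that every element of the source reduces to an $\FF_p$-linear combination of canonical monomials $\bmu^\alpha$ and $\bmu^\alpha\x_{S,f}$ with $\alpha_i < p$ and $(S,f)\in K$, by iteratively applying the $\x\x$-relation to eliminate products of two or more $\x$'s and using $\bmu_i^p = 0$ to truncate exponents. The $\Phi$-images of these canonical monomials are pairwise distinct basis elements of $Z^D$, so the spanning set must already be linearly independent and $\Phi$ is an isomorphism. The main obstacle is the consistency of this normal-form reduction: one must confirm that iterating the $\x\x$-relation in different orders produces the same canonical form, and that no hidden relation among the canonical monomials is forced. This follows because $Z^D$ is an honest commutative $\FF_p$-algebra in which the canonical monomials already form a basis, so consistency on the target forces consistency on the source.
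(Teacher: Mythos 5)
Your argument is correct and is exactly the proof the authors intend: combine \Cref{lem:fracturebyfs} (which reduces $(C,D)$ to $A\otimes(\bigoplus_f C^f, D)$ with $A=\bigotimes_i\FF_p[\bmu_i]/\bmu_i^p$ carrying the zero differential) with \Cref{lem:gamma2lambdaacyclic} (which gives $H^f=0$ for $f\neq 0$ and $Z^f=B^f$ with the explicit basis $\{D\chi_{S,f}:(S,f)\in K\}$), and then \Cref{lem:chismultiply} for well-definedness of the presentation map; the injectivity-via-normal-form step is exactly the right closing move since the canonical monomials visibly span the source and hit a basis of $Z^D$. One small remark: you specialize to $t_S=\min S$ when checking that $(S\cup T\cup\{u\},f+g)\in K$, but this loses no generality, since the paper's condition $t_{S\cup T}\in\{t_S,t_T\}$ forces $t$ to be the minimum with respect to some total order on $\NN$, and the same inequality argument ($t_{f+g}\leq\min(t_f,t_g)<y$ for any $y\in S\cup T$) then applies verbatim.
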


\section{Motivic Hochschild homology over algebraically closed fields}
\label{section:acf}

In this section we work over an algebraically closed field $F$ of  exponential characteristic $\cc(F)\neq p$. 
Then $\rho=0$ since every unit is a square, 
and 
\begin{equation}
\label{equation:algclosedmodp}
\MM_{\star}
\cong
k^{M}_{\ast}[\tau]
\cong
\FF_{p}[\tau]
\end{equation}
by \cite[Corollary 4.3, p.254]{MR1764203}, 
where $\vert \tau\vert=(0,-1)$.
From \eqref{equation:Aingeneral} and \eqref{equation:algclosedmodp} we deduce
\begin{equation}
\label{equation:Aalgclosed}
\AAA_{\star}
=
\begin{cases}
\FF_{p}[\tau,\xi_{i+1},\tau_{i}]_{i\geq 0}/(\tau_{i}^{2}-\tau\xi_{i+1}) & p=2 \\
\FF_{p}[\tau,\xi_{i+1}]_{i\geq 0}\otimes\LAMBDA_{\FF_{p}}(\tau_{i})_{i\geq 0} & p\neq 2.
\end{cases}
\end{equation}
If $p=2$ and we invert $\tau$ in $\AAA_{\star}$, 
then $\xi_{i}$ is no longer needed as a generator because $\xi_{i+1}=\tau^{-1}\tau_{i}^{2}$:
\begin{equation}
\label{equation:Aalgclosedtauinv}
\pi_\star((\M\FF_p\smsh\M\FF_p)[\tau^{-1}])\cong\AAA_{\star}[\tau^{-1}]
=
\begin{cases}
\FF_{p}[\tau^{\pm1},\tau_{i}]_{i\geq 0} & p=2 \\
\FF_{p}[\tau^{\pm1},\xi_{i+1}]_{i\geq 0}\otimes\LAMBDA_{\FF_{p}}(\tau_{i})_{i\geq 0} & p\neq 2.
\end{cases}
\end{equation}
Likewise, since $\AAA_\star$ is free as an $\FF_p[\tau]$-module, taking the quotient by $\tau^{p-1}$ (for any prime $p$) gives an isomorphism of Hopf algebras
\begin{equation}
\label{equation:Aalgclosedmodtaup-1}
\pi_\star((\M\FF_p\smsh\M\FF_p)/\tau^{p-1})\cong\AAA_{\star}/\tau^{p-1}
=
(\bigotimes_{i\geq 0}\FF_{p}[\xi_{i+1}]\otimes\LAMBDA_{\FF_{p}}(\tau_{i}))
\otimes
\FF_{p,\tau}.
\end{equation}
Here $\FF_{p,\tau}$ is shorthand for $\FF_{p}[\tau]/\tau^{p-1}$.
In \Cref{sec:modtau}, 
we use \eqref{equation:Aalgclosedmodtaup-1} to compute the coefficients of the mod 
$\tau^{p-1}$ reduction of $\MHH(\FF_{p})$.

\subsection{{\'E}tale motivic Hochschild homology}
\label{sec:tauinverted}

We refer to \cite{2020arXiv200304006B}, \cite{2017arXiv171106258E} for $\tau$-self maps and applications 
towards {\'e}tale hyperdescent for motivic spectra.
Suppose $\RRR/p$ is a motivic $E_{\infty}$ ring spectrum defined over an algebraically closed field.
Then the canonical map 
\begin{equation}
\label{equation:periodization}
\RRR/p\to\RRR/p[\tau^{-1}]
\end{equation}
exhibits the $\tau$-periodization as a motivic $E_{\infty}$ ring spectrum under $\RRR/p$;
see \cite[\S12]{2017arXiv171103061B}, \cite[\S8]{2017arXiv171106258E} for recent expositions.
If $\RRR$ happens to be cellular, 
then so is $\RRR/p[\tau^{-1}]$.
Owing to \cite[Theorem 1.2]{2020arXiv200304006B}, 
\eqref{equation:periodization} is an {\'e}tale localization
(the $\rho$-completion in \cite{2020arXiv200304006B} is obsolete over algebraically closed fields, 
and for $p\neq 2$ the {\'e}tale localization involves only the ``+''-part of $\RRR/p$).
We note that \eqref{equation:periodization} induces an isomorphism on $\tau$-inverted homotopy groups.
\vspace{0.1in}

At all primes, 
the $\tau$-periodic mod-$p$ motivic Steenrod algebra agrees with the tensor product of the topological 
mod-$p$ Steenrod algebra with the Laurent polynomial ring $\FF_{p}[\tau^{\pm 1}]$.
This observation implies that after $p$-completion the $\tau$-periodic motivic stable homotopy groups are 
isomorphic to the classical stable homotopy groups with $\tau^{\pm 1}$ adjoined 
\cite{zbMATH05898278}, \cite[\S4]{zbMATH07303324}.
In \Cref{sec:tauinverted}, 
we prove a similar statement for motivic and topological Hochschild homology.
\vspace{0.1in}

We calculate  $\MHH_\star(\FF_p)[\tau^{-1}]\cong\pi_\star(\MHH(\FF_p)[\tau^{-1}])$ directly by the Tor spectral sequence, using the relations and differentials from \Cref{lem:extension} and \Cref{rem:firstdiff} and by appealing to \Cref{equation:Aalgclosedtauinv} and 
the naturally induced equivalence of motivic spectra
\begin{equation}
\label{cor:tauinvertedMHH}
\MHH(\FF_{p})[\tau^{-1}]
=
(\M\FF_{p}\smsh_{\M\FF_{p}\smsh\M\FF_{p}}\M\FF_{p})[\tau^{-1}]
\xrightarrow{\simeq} 
\M\FF_{p}[\tau^{-1}]\smsh_{(\M\FF_{p}\smsh\M\FF_{p})[\tau^{-1}]}\M\FF_{p}[\tau^{-1}].
\end{equation}
As before we set $\mu_{i}:=\sigma\tau_i$ and $\lambda_{i}:=\sigma\xi_{i}$.

\begin{lemma}
\label{lemma:Einfinitytauinverted}
The Tor spectral sequence of $\MHH(\FF_{p})[\tau^{-1}]$ collapses at the $E^{p}$ page and 
$$
E^{\infty}
=(\bigotimes_{i\geq 0}\LAMBDA_{\FF_{p}}(\mu_{i}))[\tau^{\pm 1}].
$$
For $p$ odd the only nonzero differentials $d^r$ for $r>1$ are generated by
\begin{equation}
\label{equation:bdifferentialoddprimes}
d^{p-1}(\gamma_{j}\mu_{i})
\overset{\cdot}=
\tau^{p-1}\lambda_{i+1}\gamma_{j-p}\mu_{i}
\end{equation}
for all $i\geq 0$, $j\geq p$. 
\end{lemma}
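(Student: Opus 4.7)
The plan is to compute the $E^{2}$ page of the Tor spectral sequence \eqref{equation:bss} for $\MHH(\FF_p)[\tau^{-1}]$ using \eqref{equation:Aalgclosedtauinv} and \Cref{lemma:torsionproducts}, then locate the nontrivial differentials via \Cref{lem:extension} combined with the weight count in \Cref{rem:firstdiff}, and finally verify collapse by a filtration argument.

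Applying \Cref{lemma:torsionproducts} factor-by-factor to the presentation \eqref{equation:Aalgclosedtauinv}, and writing $\mu_{i}=\sigma\tau_{i}$ and $\lambda_{i+1}=\sigma\xi_{i+1}$, I find
$$
E^{2}
\cong
\begin{cases}
\FF_{2}[\tau^{\pm 1}]\otimes\bigotimes_{i\geq 0}\LAMBDA_{\FF_{2}}(\mu_{i}), & p=2, \\
\FF_{p}[\tau^{\pm 1}]\otimes\bigotimes_{i\geq 0}\LAMBDA_{\FF_{p}}(\lambda_{i+1})\otimes\GAMMA_{\FF_{p}}(\mu_{i}), & p\text{ odd.}
\end{cases}
$$
When $p=2$ this already matches the claimed $E^{\infty}$, and collapse at $E^{2}$ follows from a bidegree argument: each $\mu_{i}$ sits in filtration $1$, so $d^{r}(\mu_{i})$ for $r\geq 2$ lies in nonpositive filtration where $\FF_{2}[\tau^{\pm 1}]$ carries no class of the requisite bidegree, and the Leibniz rule of \Cref{prop:bss} extends the vanishing to all of $E^{2}$.

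For $p$ odd the heart of the argument is pinning down the $d^{p-1}$ differential. Since $\rho=0$ over an algebraically closed field, \Cref{lem:extension} specializes to $\tau^{p-1}\lambda_{i+1}=0$ in $\MHH_{\star}(\FF_{p})$; after inverting $\tau$ this forces $\lambda_{i+1}$ to be a boundary in the spectral sequence. A weight count as in \Cref{rem:firstdiff} shows that $\gamma_{p}\mu_{i}$ at filtration $p$ is the unique $E^{2}$-class of compatible bidegree capable of mapping to a nonzero multiple of $\tau^{p-1}\lambda_{i+1}$, and the same count rules out any nontrivial $d^{r}$ for $1<r<p-1$, yielding
$$
d^{p-1}(\gamma_{p}\mu_{i})\overset{\cdot}{=}\tau^{p-1}\lambda_{i+1}.
$$
To propagate the general formula $d^{p-1}(\gamma_{j}\mu_{i})\overset{\cdot}{=}\tau^{p-1}\lambda_{i+1}\gamma_{j-p}\mu_{i}$ for $j\geq p$, I invoke the multiplicative and Hopf-algebra structure of \Cref{prop:bss}: whenever $\binom{j}{p}\not\equiv 0\pmod p$ the Leibniz rule applied to $\gamma_{p}\mu_{i}\cdot\gamma_{j-p}\mu_{i}=\binom{j}{p}\gamma_{j}\mu_{i}$ pins the image down directly, and in the exceptional cases $j=p^{s}$ with $s\geq 2$ I will use the co-Leibniz rule against the coproduct $\psi\gamma_{j}\mu_{i}=\sum_{a+b=j}\gamma_{a}\mu_{i}\otimes\gamma_{b}\mu_{i}$ inductively.

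Finally I compute the cohomology of $E^{2}$ under $d^{p-1}$ factor by factor. In each $\LAMBDA_{\FF_{p}}(\lambda_{i+1})\otimes\GAMMA_{\FF_{p}}(\mu_{i})$ every class divisible by $\lambda_{i+1}$ becomes a boundary while every $\gamma_{j}\mu_{i}$ with $j\geq p$ ceases to be a cycle, leaving the span $\{1,\mu_{i},\gamma_{2}\mu_{i},\dots,\gamma_{p-1}\mu_{i}\}$. Tensoring over $i$ and extending scalars by $\FF_{p}[\tau^{\pm 1}]$ reproduces the claimed $E^{\infty}$; since every surviving class sits at filtration less than $p$, any $d^{r}$ for $r\geq p$ would land in negative filtration and therefore vanish, giving collapse at $E^{p}$. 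The principal obstacle I anticipate is establishing the $d^{p-1}$ formula at the divided power generators $\gamma_{p^{s}}\mu_{i}$ with $s\geq 2$: by Lucas' theorem all the useful binomial coefficients degenerate, so one genuinely needs the co-Leibniz machinery rather than a bare multiplicative argument to close the induction.
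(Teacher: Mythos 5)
Your approach is essentially the same as the paper's: compute the $E^2$ page from \eqref{equation:Aalgclosedtauinv} and \Cref{lemma:torsionproducts}, handle $p=2$ by a filtration bound, establish $d^{p-1}(\gamma_p\mu_i)\overset{\cdot}{=}\tau^{p-1}\lambda_{i+1}$ from \Cref{lem:extension} and \Cref{rem:firstdiff}, propagate to higher divided powers, and conclude collapse at $E^p$. However, two steps deserve more care.

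First, you rule out $d^r$ for $1<r<p-1$ by ``a weight count as in \Cref{rem:firstdiff}.'' That remark's count uses the inequality ``weight at least $p^{i+1}-p$,'' which comes from $\tau$-exponents being non-negative; after inverting $\tau$ that lower bound has no force, since any weight can be reached by multiplying by $\tau^{\pm 1}$. The paper instead uses the Hopf-algebra primitivity argument of \Cref{prop:bss}: the first nonzero differential must send an algebra generator (filtration a power of $p$) to a coalgebra primitive (filtration $1$), forcing $r=p^s-1\geq p-1$. You should either switch to that argument or replace the weight count by a filtration--plus--total-degree count that does not appeal to the non-negativity of $\tau$-exponents.

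Second, the split into ``Leibniz when $\binom{j}{p}\not\equiv 0$, co-Leibniz only at $j=p^s$'' does not exhaust the cases. By Lucas, $\binom{j}{p}\equiv 0\pmod p$ whenever $j\bmod p^2<p$ (e.g.\ $j=p^2+1,\dots,p^2+p-1$), not only for prime powers, so the plain decomposition $\gamma_j\mu_i\propto\gamma_p\mu_i\cdot\gamma_{j-p}\mu_i$ fails on a larger set than you describe. You can fix this by choosing a different splitting $\gamma_j\mu_i\propto\gamma_{p^k}\mu_i\cdot\gamma_{j-p^k}\mu_i$ with a nonzero binomial coefficient and inducting, but then you still need the divided-power generators $\gamma_{p^s}\mu_i$ as base cases and must establish $d^{p-1}$ there by co-Leibniz anyway. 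The paper sidesteps this bookkeeping by running the co-Leibniz computation uniformly for all $j\geq p$, showing $d^{p-1}(\gamma_{p+k}\mu_i)-\tau^{p-1}\lambda_{i+1}\gamma_k\mu_i$ is a primitive and hence zero; this is cleaner. Finally, note that for the collapse at $E^p$ you should reason about the algebra generators of $E^p$ (all in filtration $\leq 1$) together with the derivation property, since arbitrary products of surviving classes can have filtration $\geq p$.
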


\begin{proof}
  \Cref{lemma:torsionproducts}, \Cref{equation:Aalgclosedtauinv}, and \Cref{cor:tauinvertedMHH} yield the $E^{2}$ page.
When $p=2$, 
we have 
\begin{equation}
\label{equation:E2peventauinverted}
E^{2}
=
(\bigotimes_{i\geq 0}\LAMBDA_{\FF_{2}}(\mu_{i}))[\tau^{\pm 1}].
\end{equation}
Since all the $\mu_{i}$s have filtration degree $1$, 
there are no non-trivial differentials and we conclude that $E^{\infty}=E^2$.
When $p$ is odd, 
the $E^{2}$ page takes the form
\begin{equation}
\label{equation:E2poddtauinverted}
E^{2}
=
(\bigotimes_{i\geq 0}\GAMMA_{\FF_{p}}(\mu_{i})
\otimes 
\LAMBDA_{\FF_{p}}(\lambda_{i+1}))
[\tau^{\pm 1}].
\end{equation}
The Tor spectral sequence starts out as an augmented unital 
$\MM_\star[\tau^{-1}]$-Hopf algebra 
since \eqref{equation:E2poddtauinverted} is flat over $\MM_\star[\tau^{-1}]$. 
Arguing as in \cite[\S4]{zbMATH02221879}, 
\cite[\S5]{zbMATH05010546}, \cite[\S1.2]{zbMATH07303328}, 
\cite{zbMATH00224014}, \cite{zbMATH03730966},
we'll see that the non-trivial differentials are as claimed.
More precisely, since the shortest differential in the lowest total degree must go from an algebra generator 
(these lie in filtration powers of $p$) to a coalgebra primitive (these lie in filtration $1$),  
the differentials $d^r$ for $1<r<p-1$ are all zero.
Recall from \Cref{rem:firstdiff} that we established the said differential for $j=p$ integrally:
$d^{p-1}(\gamma_{j}\mu_{i})
\overset{\cdot}=
\tau^{p-1}\lambda_{i+1}$
and we move from there by induction on $j\geq p$ and the coalgebra structure in \Cref{lemma:torsionproducts}; 
this is, for $k\geq 0$, the calculation
\begin{align}
\psi d^{p-1}(\gamma_{p+k}\mu_{i})
-
\psi(\tau^{p-1}\lambda_{i+1}\gamma_{k}\mu_{i})
& =  
(d^{p-1}\otimes 1+1\otimes d^{p-1})\psi(\gamma_{p+k}\mu_{i}) 
\nonumber \\
&\phantom{=}~ -
\tau^{p-1}
(\lambda_{i+1}\otimes 1+1\otimes\lambda_{i+1})
(\Sigma_{a+b=k}\gamma_{a}\mu_{i}\otimes\gamma_{b}\mu_{i}) 
\nonumber \\
& =
(d^{p-1}\otimes 1)(\gamma_{p+k}\mu_{i}\otimes 1)
+
(1\otimes d^{p-1})(1\otimes\gamma_{p+k}\mu_{i}) 
\nonumber \\
&\phantom{=}~ +
\tau^{p-1}
\underset{a+b=p+k; a,b>0}{\sum}
(
\lambda_{i+1}\gamma_{a-p}\mu_{i}\otimes\gamma_{b}\mu_{i}
+
\gamma_{a}\mu_{i}\otimes\lambda_{i+1}\gamma_{b-p}\mu_{i}
)
\nonumber \\
&\phantom{=}~ -
\tau^{p-1}
\underset{a+b=k}{\sum}(\lambda_{i+1}\gamma_{a}\mu_{i}\otimes\gamma_{b}\mu_{i}
+\gamma_{a}\mu_{i}\otimes\lambda_{i+1}\gamma_{b}\mu_{i})
\nonumber \\
& =
(d^{p-1}\otimes 1)(\gamma_{p+k}(\mu_{i})\otimes 1)
+
(1\otimes d^{p-1})(1\otimes\gamma_{p+k}(\mu_{i}))
\nonumber
\end{align}
shows the difference $d^{p-1}(\gamma_{p+k}\mu_{i})-\tau^{p-1}\lambda_{i+1}\gamma_{k}\mu_{i}$ is a coalgebra primitive;
however, $0$ is the only such element in the given degree.
The remaining algebra generators on the $E^{p}$ page are in filtration degree $\leq 1$, 
and hence $E^{\infty}=E^{p}$.
\end{proof}

\begin{remark}
Alternatively, 
an appeal to rigidity for extensions of algebraically closed fields as in \Cref{remark:rigidity} or \cite{zbMATH05284762} 
(in characteristic zero) reduces to considering the complex numbers.
Over $\CC$, 
the differential \eqref{equation:bdifferentialoddprimes} is forced by B{\"o}kstedt's differential 
$d^{p-1}(\gamma_{j}\mu_{i})=\lambda_{i+1}\gamma_{j-p}\mu_{i}$ in the Tor spectral sequence for $\THH_{\ast}(\FF_{p})$.
In the motivic case, 
the correction term $\tau^{p-1}$ ensures agreement of the weights.
\end{remark}

\begin{theorem}
\label{theorem:tauinvertedMHH}
There are isomorphisms
$$
\MHH_{\star}(\FF_{p})[\tau^{-1}]
\cong
\FF_{p}[\tau^{\pm 1},\mu_{i}]_{i\geq 0}/(\mu_{i}^{p}-\tau^{p-1}\mu_{i+1})
\cong
\FF_{p}[\mu,\tau^{\pm 1}]
\cong
\THH_{\ast}(\FF_{p})[\tau^{\pm 1}].
$$
The generator $\mu$ has bidegree $(2,0)$.
\end{theorem}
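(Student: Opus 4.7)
The plan is to read off the additive structure of $\MHH_{\star}(\FF_p)[\tau^{-1}]$ from the $E^{\infty}$-page of the Tor spectral sequence as computed in \Cref{lemma:Einfinitytauinverted}, resolve the one hidden multiplicative extension using \Cref{lem:extension}, and finally recover the comparison with topological Hochschild homology via Bökstedt.

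First, \Cref{lemma:Einfinitytauinverted} identifies the $E^{\infty}$-page (after inverting $\tau$) as
\[
E^{\infty} \cong \bigotimes_{i \geq 0} \FF_p[\mu_i]/(\mu_i^p) \otimes \FF_p[\tau^{\pm 1}],
\]
with $\mu_i$ in filtration one and bidegree $(2p^i, p^i-1)$. A straightforward bigraded dimension count, using the base-$p$ expansion of $k = \sum a_i p^i$ with $0 \le a_i < p$, produces exactly one basis monomial $\tau^{w'} \prod \mu_i^{a_i}$ in each bidegree $(2k, w)$; this matches the bigraded Poincaré series of $\FF_p[\mu, \tau^{\pm 1}]$ with $|\mu| = (2,0)$ under the substitution $\mu = \mu_0$, $\mu_{i+1} = \tau^{1-p}\mu_i^p$.

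Second, \Cref{lem:extension} provides the hidden multiplicative extension
\[
\mu_i^p = \tau^{p-1}\mu_{i+1}
\]
in $\MHH_{\star}(\FF_p)$, and hence in its $\tau$-inversion. This relation is invisible on $E^{\infty}$ because the left-hand side lies in filtration $p$ while the right-hand side lies in filtration one. Since $\tau$ is a unit after localization, the relation rewrites as $\mu_{i+1} = \tau^{1-p}\mu_i^p$, so inductively every $\mu_i$ with $i \ge 1$ is a polynomial in $\mu := \mu_0$ and $\tau^{\pm 1}$. This yields a surjective $\FF_p$-algebra map
\[
\FF_p[\tau^{\pm 1},\mu_i]_{i \ge 0}/(\mu_i^p - \tau^{p-1}\mu_{i+1}) \twoheadrightarrow \MHH_{\star}(\FF_p)[\tau^{-1}],
\]
and the dimension count from the first step forces it to be an isomorphism. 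The same count identifies the quotient with $\FF_p[\mu,\tau^{\pm 1}]$, giving the first two stated isomorphisms. The third isomorphism is immediate from Bökstedt's calculation $\THH_{\ast}(\FF_p) \cong \FF_p[\mu]$ with $|\mu| = 2$ \cite{MB:THHZF} upon tensoring with $\FF_p[\tau^{\pm 1}]$.

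The main potential obstacle is ruling out further hidden extensions, either multiplicative or additive, beyond the one recorded in \Cref{lem:extension}. The sparsity of $E^{\infty}$ (one basis element per bidegree) closes this gap: any additional relation would collapse the bigraded dimension and contradict the matching Poincaré series, while the relation $\mu_i^p = \tau^{p-1}\mu_{i+1}$ already suffices to pass from the associated graded to the stated ring structure. The relation $\tau^{p-1}\sigma\xi_{i+1} = 0$ from \Cref{lem:extension} (with $\rho = 0$ over an algebraically closed field) is consistent with the $\tau$-inverted $\lambda_{i+1}$ being killed by the $d^{p-1}$-differential, so nothing further needs to be extracted from the spectral sequence.
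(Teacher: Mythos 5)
Your proposal is correct and follows the paper's proof essentially step for step: read off the $\tau$-inverted $E^\infty$ page from \Cref{lemma:Einfinitytauinverted}, resolve the multiplicative extension $\mu_i^p = \tau^{p-1}\mu_{i+1}$ via \Cref{lem:extension}, and compare with $\THH_*(\FF_p)$; you also spell out the bigraded dimension count ruling out further hidden extensions, which the paper leaves implicit. One small note: for odd $p$ the $E^\infty$ page is $\bigotimes_i \FF_p[\mu_i]/\mu_i^p \otimes \FF_p[\tau^{\pm 1}]$ (as you write), so the $\LAMBDA_{\FF_p}(\mu_i)$ appearing in the statement of \Cref{lemma:Einfinitytauinverted} should be read as the height-$p$ truncated polynomial algebra rather than the exterior algebra.
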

\begin{proof}
  \Cref{cor:tauinvertedMHH} 
  shows the $E^{\infty}$ page for $\MHH(\FF_{p})[\tau^{-1}]$ 
is the Laurent polynomials in $\tau$ of the $E^{\infty}$ page for $\THH(\FF_{p})$.
The result now follow from \Cref{lemma:Einfinitytauinverted} and the multiplicative extension 
\begin{equation}
\label{equation:periodizedextension}
\mu_{i}^{p}=
\tau^{p-1}\mu_{i+1}.
\end{equation}
of \Cref{lem:extension}.
\end{proof}

Hence 
all the classes $\mu_{i}\in\MHH(\FF_p)$ are nontrivial and
we may identify the $\tau$-free part in $\MHH_{\star}(\FF_{p})$ with 
\begin{equation}
\label{equation:nontautorsion}
\FF_{p}[\tau,\mu_{i}]_{i\geq 0}/(\mu_{i}^{p}-\tau^{p-1}\mu_{i+1}).
\end{equation}
This is depicted graphically for $p=2$ and $p=3$ in \Cref{fig:MHHF2et} and \Cref{fig:MHHF3et}, respectively.

\begin{figure}
    \centering
    \includegraphics[width=5in]{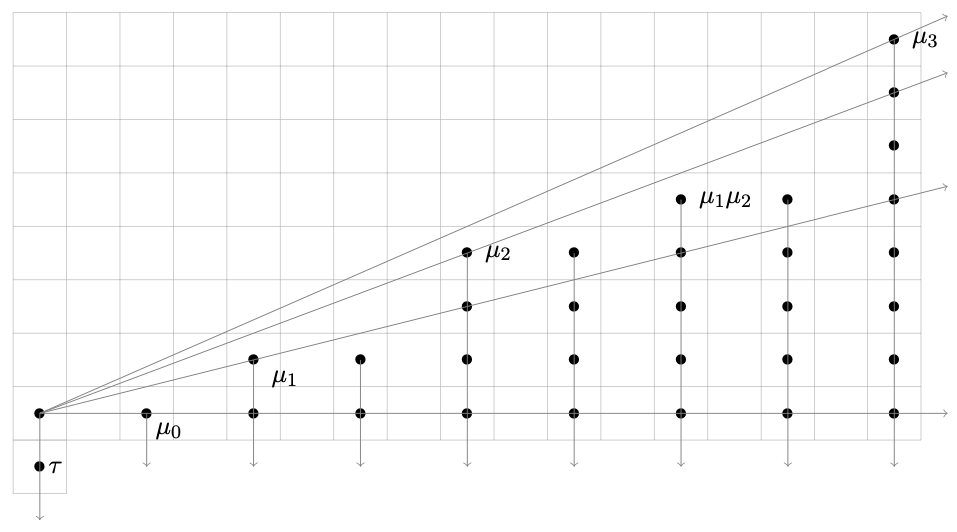}
    \caption{The \'etale motivic Hochschild homology of $\FF_2$. The vertical lines indicate $\tau$-multiplication, while the horizontal and diagonal lines depict powers of $\mu_i$, $i=0,1,2,3$.}
    \label{fig:MHHF2et}
\end{figure}

\begin{figure}
    \centering
    \includegraphics[width=\textwidth]{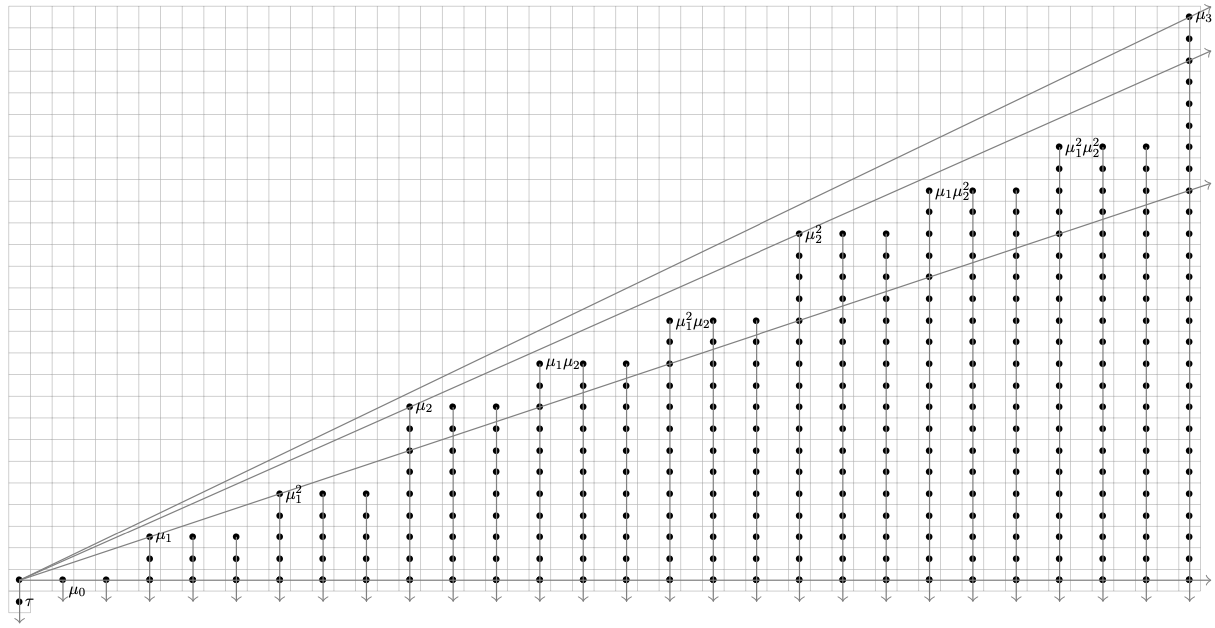}
    \caption{The \'etale motivic Hochschild homology of $\FF_3$ depicted in the same graphical style as \Cref{fig:MHHF2et}.}
    \label{fig:MHHF3et}
\end{figure}

\subsection{Reduced motivic Hochschild homology}\label{sec:modtau}
To proceed to the next step in our strategy for calculating $\MHH(\FF_p)$ over an algebraically closed field $F$ with $\cc(F)\neq p$, 
we form the cofiber of $\tau^{n}$ 
(for our calculations, it suffices to consider $n=p-1$) 
\begin{equation}
\label{equation:cofibertau}
\Sigma^{0,n}\M\FF_{p}
\overset{\tau^{n}}{\rightarrow}
\M\FF_{p}
\rightarrow
\M\FF_{p}/\tau^{n}.
\end{equation}

We thank Markus Spitzweck for informing us that $\M\FF_{p}/\tau^{n}$ is a motivic $E_{\infty}$ ring spectrum 
for all $n\geq 1$.
His argument goes as follows:
$\M\mathbb{Z}$ is strongly periodizable by \cite[Corollary C.3]{zbMATH07015021}.
Thus the mod-$p$ coefficient ring $\MM_{\star}$ is a motivic differential graded algebra, 
i.e., 
a graded $E_{\infty}$ ring spectrum in complexes of $\FF_{p}$-vector spaces.
In fact, 
$\MM_{\star}$ is formal so that $\MM_{\star}/\tau^{n}$ is $E_{\infty}$ over $\MM_{\star}$ for all $n\geq 1$.
This implies the corresponding claim for $\M\FF_{p}/\tau^{n}$. 
In effect, 
let $\mathcal{E}$ be the free $E_{\infty}$ algebra in graded complexes on a generator $\tau$ in degree $(0,1)$. 
Its $0$-truncation, 
with respect to the natural $t$-structure on the derived category of graded abelian groups, 
is the formal model $\FF_{p}[\tau]$.
Thus $\MM_{\star}$ and $\FF_{p}[\tau]$ are equivalent since the natural map $\mathcal{E}\rightarrow \MM_{\star}$ 
is the $0$-truncation.
When $n=1$, 
we also refer to 
Gheorghe \cite{arXiv:1701.04877} for the fact that $\M\FF_{2}\rightarrow\M\FF_{2}/\tau$ 
is a map of motivic $E_{\infty}$ ring spectra.
\vspace{0.1in}

Inserting $\M\FF_{p}/\tau^{n}$ into \eqref{equation:mhh0} yields the derived smash product
\begin{equation}
\label{equation:mhhMFCtau}
\MHH(\FF_{p})/\tau^{n}
\simeq
\M\FF_{p}/\tau^{n}
\wedge_{(\M\FF_{p}\wedge\M\FF_{p})/\tau^{n}}
\M\FF_{p}/\tau^{n}.
\end{equation}
Owing to \eqref{equation:cofibertau} and cellularity of $\M\FF_{p}$, 
see \S\ref{subsction:tkssfmhh}, 
it follows that $\M\FF_{p}/\tau^{n}$ is cellular.
Thus \eqref{equation:mhhMFCtau} gives rise to the Tor spectral sequence
\begin{equation}
\label{equation:kssCtau}
E^{2}_{h,t,w}
=
\TOR^{\AAA_{\star}/\tau^{n}}_{h,t,w}(\MM_{\star}/\tau^{n},\MM_{\star}/\tau^{n})
\Rightarrow
\MHH_{h+t,w}(\FF_{p})/\tau^{n}.
\end{equation}
Recall that $\FF_{p,\tau}$ is shorthand for $\FF_{p}[\tau]/\tau^{p-1}$.
Lemma \ref{lemma:torsionproducts} and \eqref{equation:Aalgclosedmodtaup-1} implies the Tor spectral sequence 
\eqref{equation:bss} for $\MHH_{\star}(\FF_{p})/\tau^{p-1}$ takes the form
\begin{equation}
\label{equation:kssCtaualgebraicallyclosed}
\bar E^{2}_{\ast,\star}
\cong
\left(\bigotimes_{i\geq 0}
\GAMMA_{\FF_{p}}(\sigma\tau_{i})
\otimes 
\LAMBDA_{\FF_{p}}(\sigma\xi_{i+1})\right)
\otimes
\FF_{p,\tau}
\Rightarrow
\MHH_{\star}(\FF_{p})/\tau^{p-1}.
\end{equation}
This is a first quadrant spectral sequence; 
the horizontal direction is the ``filtration'', 
the vertical direction is the ``degree'', 
and every term is graded by ``weight.'' 
Recall that if $x$ has filtration $f_{x}$, degree $d_{x}$ and weight $w_{x}$, 
we write $|x|=(f_{x},d_{x};w_{x})$ so that the differentials take the form
$$
d^r\colon \bar E^r_{f,d;w}\to \bar E^r_{f-r,d+r-1;w}.
$$
In \eqref{equation:kssCtaualgebraicallyclosed}, 
we set $\bmu_{i}:=\sigma\tau_i$ and $\blambda_{i+1}:=\sigma\xi_{i+1}$.  The bar is meant to signify that the generators are 
mod-$\tau^{p-1}$ classes and should not be confused with the conjugate classes.
For these classes, 
we note the degrees 
\begin{enumerate}
\item $|\blambda_{i+1}|=(1,2p^{i+1}-2;p^{i+1}-1)$,
\item $|\gamma_{p^{j}}\bmu_{i}|=(p^{j},2p^{i+j}-p^{j};p^{i+j}-p^{j})$.
\end{enumerate}
Thus for $x=\blambda_{i+1}$ and $x=\gamma_{p^{j}}\bmu_{i}$ we have the congruence $w_{x}\equiv 0\bmod p-1$.
Hence if $x\in \bar E^{2}_{\ast,\star}$ in \eqref{equation:kssCtaualgebraicallyclosed} has weight 
$w_{x}=-n+(p-1)m$, $0\leq n\leq p-1$, 
then $n$ equals $x$'s $\tau$-multiplicity.
Another helpful bookkeeping device for our calculation is the Chow degree of $x$, 
see \cite[Definition 3.1]{2020arXiv201202687B} and \cite[Definition 2.1.10]{2014arXiv1407.8418I} 
for related terminology, 
defined by 
$$
c(x)=f_{x}+2w_{x}-d_{x}
$$
In particular, we have 
\begin{enumerate}
\item $c(\blambda_{i+1})=1+2(p^{i+1}-1)-(2p^{i+1}-2)=1$
\item $c(\gamma_{p^{j}}\bmu_{i})=p^{j}+2(p^{i+j}-p^{j})-(2p^{i+j}-p^{j})=0$
\end{enumerate}
Every homogeneous class $x\in \bar E^{2}_{\ast,\star}$ in \eqref{equation:kssCtaualgebraicallyclosed} 
is a monomial in the generators $\blambda_{i+1}$ and $\gamma_{p^{j}}\bmu_{i}$.
The Chow degree $c(x)$ records the number of $\lambda_{i+1}$ classes in $x$, 
and the equality $0\leq c(x)\leq f_{x}$ follows from the definition.

\begin{lemma}
  \label{lem:MHH/taup-1}
The Tor spectral sequence \eqref{equation:kssCtaualgebraicallyclosed} for $\MHH_{\star}(\FF_{p})/\tau^{p-1}$ 
collapses at its $E^{2}$ page.
\end{lemma}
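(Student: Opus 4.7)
The plan is to prove collapse by showing that every differential $d^r$ for $r\geq 2$ vanishes, via a weight and Chow-degree sparsity argument combined with the derivation property. Since $\M\FF_p/\tau^{p-1}$ is a commutative motivic ring spectrum, Proposition~\ref{prop:bss} applies and the spectral sequence is one of $\FF_{p,\tau}$-algebras; consequently each $d^r$ is a derivation on $E^r$. Because $E^2$ is generated as an $\FF_{p,\tau}$-algebra by the classes $\gamma_{p^j}\bmu_i$ (for $i,j\geq 0$) and $\blambda_{i+1}$ (for $i\geq 0$), it suffices to verify that $d^r$ vanishes on each of these generators.

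The key observation is the tight interaction between weight and Chow degree. Every algebra generator $\gamma_{p^j}\bmu_i$ and $\blambda_{i+1}$ has weight divisible by $p-1$, while $\tau$ has weight $-1$, so $\tau^{p-1}=0$ forces each non-zero homogeneous class to have a well-defined $\tau$-multiplicity $a\in\{0,1,\ldots,p-2\}$ determined bijectively by the weight modulo $p-1$ via $a\mapsto -a\pmod{p-1}$. Since differentials preserve weight, they also preserve this $\tau$-multiplicity. Moreover, any monomial has Chow degree $-2a+k_\lambda$, where $k_\lambda\geq 0$ counts the $\blambda$-factors. In particular, any non-zero class of $\tau$-multiplicity $0$ has non-negative Chow degree.

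Applying this to the generators, $\gamma_{p^j}\bmu_i$ has $\tau$-multiplicity $0$ and Chow degree $0$ (since its weight $p^j(p^i-1)$ is divisible by $p-1$), while $\blambda_{i+1}$ has $\tau$-multiplicity $0$ and Chow degree $1$. A differential $d^r$ decreases the Chow degree by $2r-1$ without changing the $\tau$-multiplicity, so for $r\geq 2$ the target would live in Chow degree at most $1-(2r-1)=2-2r\leq -2$, contradicting the non-negativity bound in the $\tau$-free sector. Hence $d^r$ annihilates each algebra generator, and the derivation property forces $d^r=0$ on all of $E^r$. Induction on $r$ gives $E^2=E^\infty$, which is the claimed collapse.

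I do not foresee a substantial obstacle; the Chow degree and the passage to the mod-$\tau^{p-1}$ reduction appear to have been set up precisely so that the higher differentials of the ambient Tor spectral sequence for $\MHH(\FF_p)$---notably the $\tau^{p-1}$-multiples of $\blambda_{i+1}$-classes appearing in the $\tau$-inverted formula \eqref{equation:bdifferentialoddprimes}---are forced to disappear in this reduction. The only care required is to record that $d^r$ is a genuine derivation on the truncated polynomial/divided power/exterior generators, which is automatic from Proposition~\ref{prop:bss}.
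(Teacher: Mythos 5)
Your proposal is correct and follows essentially the same approach as the paper's proof: both reduce to the algebra generators $\blambda_{i+1}$ and $\gamma_{p^j}\bmu_i$ via the derivation property of $d^r$, use the fact that weight is preserved and the generators' weights vanish mod $p-1$ to see that $d^r$ of a generator cannot be $\tau$-divisible (hence has nonnegative Chow degree), and then derive a contradiction from $c(d^r x)=c(x)-2r+1\leq 2-2r<0$. Your explicit bookkeeping of the ``$\tau$-multiplicity'' is just a restatement of the paper's observation that the weight $w_x=-n+(p-1)m$ determines the $\tau$-exponent $n$.
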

\begin{proof}
For $r\geq 2$ and $x\in E^{r}_{\ast,\star}$ we note the equality of weights $w_{x}=w_{d^{r}x}$.
If $x=\tau$, 
then $d^{r}\tau=0$ since \eqref{equation:kssCtaualgebraicallyclosed} is an $\FF_{p}[\tau]$-algebra spectral sequence.
If $x=\blambda_{i+1}$ or $x=\gamma_{p^{j}}\bmu_{i}$, 
the congruence $w_{d^{r}x}\equiv 0\bmod p-1$ shows the monomials in $d^{r}x$ are not $\tau$-divisible.
Hence,
$d^{r}x=0$, and we are done, or $c(d^{r}x)\geq 0$.
It remains to note that $c(d^{r}x)=c(x)-2r+1<0$.
\end{proof}

\begin{lemma}
There are no multiplicative extensions in the mod-$\tau^{p-1}$ Tor spectral sequence 
\eqref{equation:kssCtaualgebraicallyclosed}.
\end{lemma}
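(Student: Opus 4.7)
The plan is to show that the $E^{2}=E^{\infty}$-page ring structure established in \Cref{lem:MHH/taup-1} determines the ring structure of $\MHH_{\star}(\FF_{p})/\tau^{p-1}$ as an $\FF_{p,\tau}$-algebra with no hidden multiplicative corrections. I would first reduce the problem using the weight modulo $p-1$ analysis from the proof of \Cref{lem:MHH/taup-1}: each generator $\bmu_{i}$ and $\blambda_{i+1}$ has weight divisible by $p-1$, so the $\tau$-multiplicity of any nonzero homogeneous class in $\bar{E}^{\infty}$ is determined by its weight modulo $p-1$. Consequently, hidden multiplicative extensions cannot mix $\tau$-powers, and the task reduces to verifying the algebra relations among the pure $\bmu_{i}$ and $\blambda_{i+1}$ generators in $\MHH(\FF_{p})/\tau^{p-1}$.

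Next I would verify the key relations in turn. The relation $\bmu_{i}^{p}=0$ is immediate from \Cref{lem:extension}, since $\bmu_{i}^{p}=\tau^{p-1}\bmu_{i+1}=0$ modulo $\tau^{p-1}$. Graded commutativity of the $E_{\infty}$-algebra $\MHH(\FF_{p})/\tau^{p-1}$ yields $\blambda_{i+1}^{2}=0$ for $p$ odd. For $p=2$, the bidegree $(2^{i+3}-2,2^{i+2}-2)$ of $\blambda_{i+1}^{2}$ admits no other monomial in $\bar{E}^{\infty}$: such a monomial would force $2^{i+2}$ to decompose either as a sum of two distinct powers of $2$ or as $2^{k}-1$, contradicting its unique binary expansion. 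Hence $\blambda_{i+1}^{2}$ must vanish in $\MHH(\FF_{p})/\tau^{p-1}$ as well.

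The divided power multiplication $\gamma_{m}\bmu_{i}\cdot\gamma_{n}\bmu_{i}=\binom{m+n}{m}\gamma_{m+n}\bmu_{i}$ is realized by the shuffle product on the bar complex: the class $\gamma_{n}\bmu_{i}$ is represented by the $n$-fold bar $[\tau_{i}|\cdots|\tau_{i}]$, and the shuffle product yields the binomial coefficient directly, descending to $\bar{E}^{\infty}=\bar{E}^{2}$. For the truncation relations $(\gamma_{p^{j}}\bmu_{i})^{p}=0$ with $j\geq 1$, the bidegree $(2p^{i+j+1},p^{i+j+1}-p^{j+1})$ can genuinely admit additional $\bar{E}^{\infty}$-monomials in higher filtration involving $\blambda$-factors (for instance, at $p=2$, $j=i=1$ the class $\gamma_{3}\bmu_{0}\blambda_{1}\blambda_{2}$ sits in strictly higher filtration than $(\gamma_{2}\bmu_{1})^{2}$ in the same bidegree), and these are the candidates for hidden extensions. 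I would rule them out by invoking the Hopf coalgebra structure on $\bar{E}^{\infty}$ afforded by flatness over $\FF_{p,\tau}$ (\Cref{prop:bss}): any candidate extension class carries nontrivial coproduct components absent from the coproduct of a trivially-zero $p$-th power, which forces the extension coefficient to vanish.

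The main obstacle will be the last case, the truncation relations for $j\geq 1$. Here the target bidegrees do admit extra monomials, and the coalgebra argument must be executed carefully, combining the $\FF_{p,\tau}$-Hopf algebra structure on the $E^{\infty}$-page, the explicit shuffle product on the bar complex, and the bidegree bookkeeping underlying \Cref{lem:MHH/taup-1} to eliminate each potential hidden extension.
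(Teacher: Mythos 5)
Your proposal has the right general shape (weight bookkeeping, reduce to the generators, handle the truncation relations $(\gamma_{p^j}\bmu_i)^p = 0$ as the essential case), but it contains a fundamental error in the key step that invalidates the proposed argument.

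The error is in the direction of the filtration. The skeletal filtration on the Tor spectral sequence is increasing, and the product of classes in filtrations $f_1$ and $f_2$ lands in filtration $f_1 + f_2$. If the associated-graded product in $E^\infty_{f_1+f_2}$ is zero, the actual product in the abutment lies in $F_{f_1+f_2-1}$, so a hidden extension is detected by a class of \emph{strictly lower} filtration in the same bidegree. (Compare the paradigm example: in $\THH(\FF_p)$, the extension $\mu_i^p = \mu_{i+1}$ passes from filtration $p$ to filtration $1$.) You instead search in \emph{higher} filtration, and your illustrative class $\gamma_3\bmu_0\blambda_1\blambda_2$ indeed sits in filtration $5$, above the filtration $4$ of $(\gamma_2\bmu_1)^2$. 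It is therefore not a candidate for a hidden extension, and the Hopf-coalgebra argument you sketch is aimed at the wrong targets. You also concede the coalgebra argument isn't executed; even with the filtration direction corrected, it is not clear the coproduct comparison forces the extension to vanish, since the coalgebra structure on $E^\infty$ doesn't automatically constrain the ring structure on the abutment.

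The paper's proof is a single clean bookkeeping argument using the Chow degree $c(x) = f_x + 2w_x - d_x$. Because each $\blambda_{i+1}$ has $c = 1$ and each $\gamma_{p^j}\bmu_i$ has $c = 0$, every monomial satisfies $0 \le c(x) \le f_x$. A would-be hidden extension class $x$ for $(\gamma_{p^j}\bmu_i)^p = 0$ must have the same $(d+f, w)$ and \emph{smaller} filtration $f_x < p^{j+1}$, and a direct computation gives $c(x) = 2(f_x - p^{j+1}) < 0$, a contradiction; the $\blambda_{i+1}^2 = 0$ case is ruled out by inspecting weights of filtration-$1$ classes. (Your observation that weights of the generators are divisible by $p-1$, so $\tau$-powers cannot appear in the extension class, is correct and is implicitly needed for the Chow-degree computation --- that part of your proposal makes an implicit step in the paper explicit.) Your direct derivation of $\bmu_i^p = 0$ from the relation $\mu_i^p = \tau^{p-1}\mu_{i+1}$ and the graded-commutativity argument for $\blambda_{i+1}^2 = 0$ at odd $p$ are also correct shortcuts, but the truncation relations for $j \ge 1$ remain unresolved in your proposal, and the filtration reversal means the argument as stated would fail.
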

\begin{proof}{}
The Chow degree of $x$ equals $c(x)=2f_{x}+2w_{x}-(d_{x}+f_{x})$.
To find a hidden extension for $g=(\gamma_{p^{j}}\bmu_{i})^p=0$, 
we search among the $x$'s that satisfy
\begin{enumerate}
\item $d_{x}+f_{x}=d_g+f_g=2p^{i+j+1}$, 
\item $w_{x}=w_g=p(p^{i+j}-p^{j})$,
\item $0<f_{x}<f_g=p^{j+1}$.
\end{enumerate}
This rules out the existence of multiplicative extensions, 
since for the Chow degree, 
we have 
$$
c(x)
=
2f_{x}+2w_{x}-(d_{x}+f_{x})
=
2f_{x}+2p(p^{i+j}-p^{j})-2p^{j+i+1}
=
2(f_{x}-p^{j+1})
<0.
$$ 

Likewise, 
a hidden extension for $\blambda_{i+1}^2=0$ would be a class $x$ with $|x|=(1,4(p^{i+1}-1),2p^{i+1}-2)$; 
by inspection, no such class exists since all possible $x$ of filtration $1$ have weight $p^{j}-1$, $j\geq 0$.
\end{proof}

\begin{theorem}
\label{theorem:MHHMFCtau}
There is an isomorphism of graded commutative $\FF_{p,\tau}$-algebras
\begin{equation*}
\MHH_{\star}(\FF_p)/\tau^{p-1}
\cong
(\bigotimes_{i\geq 0}
\GAMMA_{\FF_{p}}(\bmu_{i})
\otimes 
\LAMBDA_{\FF_{p}}(\blambda_{i+1}))
\otimes
\FF_{p,\tau}.
\end{equation*}
The bidegrees of the generators are $|\bmu_{i}|=(2p^{i},p^{i}-1)$ and $|\blambda_{i+1}|=(2p^{i+1}-1,p^{i+1}-1)$.
\end{theorem}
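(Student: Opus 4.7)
The plan is to assemble the theorem directly from the preceding two lemmas together with the identification of the $E^2$ page in \eqref{equation:kssCtaualgebraicallyclosed}. The Tor spectral sequence
\[
\bar E^{2}_{\ast,\star}\cong \Bigl(\bigotimes_{i\geq 0}\GAMMA_{\FF_p}(\sigma\tau_i)\otimes\LAMBDA_{\FF_p}(\sigma\xi_{i+1})\Bigr)\otimes\FF_{p,\tau}\Longrightarrow \MHH_{\star}(\FF_p)/\tau^{p-1}
\]
exists by \Cref{prop:bss} applied to $\MMM=\RRR=\QQQ=\M\FF_p/\tau^{p-1}$ (cellularity of the quotient follows from cellularity of $\M\FF_p$ and the cofiber sequence \eqref{equation:cofibertau}). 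The identification of the $E^2$ page uses the Hopf algebra description \eqref{equation:Aalgclosedmodtaup-1} of $\AAA_{\star}/\tau^{p-1}$ and \Cref{lemma:torsionproducts} factor by factor, since $\AAA_{\star}/\tau^{p-1}$ splits as a tensor product of polynomial algebras on even-degree generators $\xi_{i+1}$ and exterior algebras on odd-degree generators $\tau_i$ over $\FF_{p,\tau}$.

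Next, I will invoke \Cref{lem:MHH/taup-1}, which shows that the spectral sequence collapses at $E^2$. Then I will invoke the unnumbered multiplicative-extensions lemma immediately following it, which uses a Chow-degree argument to rule out all possible hidden relations among the generators $\gamma_{p^j}\bmu_i$ and $\blambda_{i+1}$. Combining these two facts gives an isomorphism of associated graded algebras
\[
\mathrm{gr}\,\MHH_{\star}(\FF_p)/\tau^{p-1}\cong \bar E^{\infty}_{\ast,\star}=\bar E^{2}_{\ast,\star},
\]
and the absence of multiplicative extensions promotes this to an isomorphism of bigraded $\FF_{p,\tau}$-algebras. Finally, I need to identify the named generators $\bmu_i$ and $\blambda_{i+1}$ on the left-hand side as the images of $\sigma\tau_i$ and $\sigma\xi_{i+1}$ on the $E^2$ page, which follows from the description of the suspension map as $[x]\in \bar E^{1}_{1,\ast;\ast}$ discussed after \Cref{prop:bss}.

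The bidegrees are then a bookkeeping check: using $|\tau_i|=(2p^i-1,p^i-1)$ and $|\xi_{i+1}|=(2p^{i+1}-2,p^{i+1}-1)$, the filtration-$1$ suspension adds one to the topological degree, yielding $|\bmu_i|=(2p^i,p^i-1)$ and $|\blambda_{i+1}|=(2p^{i+1}-1,p^{i+1}-1)$, which agree with the weights required for the Chow-degree constraints. The main conceptual obstacle — ruling out differentials and hidden extensions — is already carried out by the Chow-degree arguments in the preceding two lemmas, so at this stage the proof is genuinely a matter of assembly; the one small subtlety is being careful that the $E^2$ identification and the collapse are compatible with the multiplicative structure coming from the shuffle product, but this is guaranteed by \Cref{prop:bss}.
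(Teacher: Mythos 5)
Your proposal is correct and matches the paper's implicit argument: Theorem~\ref{theorem:MHHMFCtau} is stated in the paper without a separate proof environment precisely because it is the direct assembly of the $E^2$-page identification \eqref{equation:kssCtaualgebraicallyclosed}, the collapse result of \Cref{lem:MHH/taup-1}, and the unnumbered no-extensions lemma, with the bidegree bookkeeping exactly as you describe. Your identification of $\bmu_i$ and $\blambda_{i+1}$ with $\sigma\tau_i$ and $\sigma\xi_{i+1}$ via the filtration-$1$ representatives in the bar complex, and your note that multiplicativity of the spectral sequence comes from \Cref{prop:bss}, are the right supporting observations.
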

 
\begin{remark}
  The reader may recognize the answer as $\MHH_{\star}(\FF_{p})/\tau^{p-1}\cong C\otimes \FF_{p,\tau}$ where $C=\TOR^{\AAA_{\star}^{\textrm{rig}}}_{\ast,\star}(\FF_{p},\FF_{p})$
 appeared in \Cref{sec:Dcomplex}.
\end{remark}

\subsection{Integral motivic Hochschild homology}
\label{subsection:imhh}

We now turn to the integral case of the Tor spectral sequence
\begin{equation}
E^{r}_{h,t,w}
\Rightarrow
\MHH_{h+t,w}(\M\FF_{p}).
\end{equation}

There is a natural comparison map $q\colon E^r_{\star;*}\rightarrow \bar E^r_{\star;*}$ to the mod-$\tau^{p-1}$ 
Tor spectral sequence analyzed in \S\ref{sec:modtau}.
Due to \Cref{theorem:MHHMFCtau} we have the following non-trivial mod-$\tau^{p-1}$ classes and their representatives 
in the bar complex:
\begin{enumerate}
\item $\blambda_{i+1}\in \MHH_{\star}(\FF_{p})/\tau^{p-1}$ is the class of the permanent cycle 
$[\bxi_{i+1}]\in \bar E^{1}_{1,2p^{i+1}-2;p^{i+1}-1}$, 
\item 
$\gamma_j\bmu_{i}\in \MHH_{\star}(\FF_{p})/\tau^{p-1}$ is the class of the permanent cycle
$[\btau_i|\dots|\btau_i]\in \bar E^{1}_{j,j(2p^{i}-1);j(p^{i}-1)}$.
\end{enumerate}
As before, to aid the bookkeeping we also set
$$\lambda_{i+1}=[\xi_{i+1}]\in E^{1}_{1,2p^{i+1}-2;p^{i+1}-1}$$ and
$$\gamma_j\mu_{i}=[\tau_i|\dots|\tau_i]\in E^{1}_{j,j(2p^{i}-1);j(p^{i}-1)},$$ even though the $\gamma_j\mu_i$s turn out to be permanent cycles for $j<p$ only.

As already noted, when $p$ is an odd prime 
  $E^2=\FF_p[\tau]\otimes \bigotimes_{i\geq 0}\Lambda(\lambda_{i+1})\otimes\Gamma(\mu_i)$.
\begin{lemma}
  Let  $p$ be a prime.
  \begin{itemize}
  \item For $0<r<p$ the {\'e}tale localization
$$L^r_{\text{{\'e}t}}\colon E^r\to E^r[\tau^{-1}]$$ is an injection.
\item For $1<r<p-1$, the differentials $d^r:E^r\to E^r$ are all zero. 
\item For all $p$
  $$d^{p-1}\gamma_{j+p}\mu_i\overset{\cdot}=\tau^{p-1}\lambda_{i+1}\gamma_j\mu_i$$
  for $i,j\geq 0$ and for odd $p$, this generates the  $d^{p-1}$-differential multiplicatively.
  \end{itemize}
\end{lemma}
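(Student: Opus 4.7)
The plan is to bootstrap from the $\tau$-inverted calculation of Lemma~\ref{lemma:Einfinitytauinverted} via the natural map of spectral sequences $L^r_{\text{\'et}}\colon E^r \to E^r[\tau^{-1}]$. The crux is establishing injectivity of $L^r_{\text{\'et}}$ in the stated range; once this is secured, vanishing of low-degree differentials and the explicit form of $d^{p-1}$ both follow by naturality from the \'etale side.

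First I would verify injectivity of $L^r_{\text{\'et}}$ at $r = 1, 2$. Over an algebraically closed field $\MM_\star \cong \FF_p[\tau]$, and $\AAA_\star$ is free as an $\MM_\star$-module by \eqref{equation:Aalgclosed}, so the bar terms $\AAA_\star^{\otimes_{\MM_\star}\bullet}$ are $\tau$-torsion-free, and hence so is their homology $E^2$. For $p$ odd this reads $E^2 \cong \FF_p[\tau] \otimes \bigotimes_i \LAMBDA_{\FF_{p}}(\lambda_{i+1}) \otimes \GAMMA_{\FF_{p}}(\mu_i)$, manifestly $\tau$-free. I would then argue by induction on $r$ in the range $1 < r < p-1$: assuming $L^r_{\text{\'et}}$ is injective and $d^r = 0$ in the \'etale spectral sequence by Lemma~\ref{lemma:Einfinitytauinverted}, naturality $L^r \circ d^r = d^r \circ L^r$ forces $d^r = 0$ integrally, whence $E^{r+1} = E^r$ and $L^{r+1}_{\text{\'et}}$ remains injective. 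For $p = 2$ this inductive range is vacuous and only the $r = 1$ base case is needed.

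For the third bullet, the $p = 2$ formula $d^1 \gamma_{j+2}\mu_i = \tau \lambda_{i+1} \gamma_j \mu_i$ (using $\rho = 0$) was already extracted from the shuffle product in Remark~\ref{rem:firstdiff}. For odd $p$, pulling back the \'etale differential of Lemma~\ref{lemma:Einfinitytauinverted} through the injective $L^{p-1}_{\text{\'et}}$ yields $d^{p-1}\gamma_{j+p}\mu_i \overset{\cdot}{=} \tau^{p-1}\lambda_{i+1}\gamma_j\mu_i$ in $E^{p-1}$. Multiplicative generation follows because $E^{p-1} = E^2$ is flat over $\FF_p[\tau]$, so Proposition~\ref{prop:bss} makes $d^{p-1}$ a derivation for the Hopf algebra structure; the remaining algebra generators $\tau$, $\lambda_{i+1}$, and $\mu_i$ are all $d^{p-1}$-cycles by filtration considerations, as each of $\lambda_{i+1}$ and $\mu_i$ sits in filtration $1$, forcing $d^{p-1}$ to land in filtration $2-p \leq 0$, and $\tau$ is a $d^{p-1}$-cycle because the spectral sequence is $\FF_p[\tau]$-linear. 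The main obstacle is securing the base case injectivity over an algebraically closed field; everything downstream is clean naturality and derivation bookkeeping.
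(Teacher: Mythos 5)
Your proposal matches the paper's argument almost step for step: establish base-case injectivity at $r=1,2$, induct using naturality to kill $d^r$ in the range $1<r<p-1$, then pull back the $d^{p-1}$ formula from the $\tau$-inverted spectral sequence through the injective $L^{p-1}_{\text{\'et}}$. The one place where your phrasing goes astray is the sentence asserting that since the bar terms $\AAA_\star^{\otimes_{\MM_\star}\bullet}$ are $\tau$-torsion-free, ``hence so is their homology $E^2$'' --- this implication is false in general (a complex of torsion-free modules can easily have torsion in its homology, e.g.\ $\ZZ\xrightarrow{2}\ZZ$). Fortunately your very next sentence supplies the correct justification, namely the explicit Tor computation $E^2\cong \FF_p[\tau]\otimes\bigotimes_i\LAMBDA_{\FF_p}(\lambda_{i+1})\otimes\GAMMA_{\FF_p}(\mu_i)$ for odd $p$, which is manifestly $\tau$-free; this is exactly what the paper invokes (``from the Tor-calculations we get that for odd primes $p$ also $L^2_{\text{\'et}}$ is an injection''). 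So the slip is harmless, but worth excising so the intermediate claim doesn't mislead a reader. Everything else --- the vacuity of the inductive range at $p=2$, the identification of the $p=2$ differential with the shuffle computation in \Cref{rem:firstdiff}, the observation that $d^{p-1}$ is determined as a derivation because $\tau$, $\lambda_{i+1}$, and $\mu_i$ are $d^{p-1}$-cycles by filtration reasons --- is in agreement with the paper, with your discussion of multiplicative generation being somewhat more explicit than the paper's.
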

\begin{proof}
  Since the dual Steenrod algebra has no $\tau$-torsion we have that $L^1_{\text{{\'e}t}}$ is an injection, and from the Tor-calculations we get that for odd primes $p$ also $L^2_{\text{{\'e}t}}$ is an injection.  Assume that for given $0<r<p$ $L^r_{\text{{\'e}t}}$ is injective.  For $1<r<p-1$ we have established that the differential on $E^r[\tau^{-1}]$ is trivial, and so the differential on $E^r$ is trivial too. Hence  $L^{r+1}_{\text{{\'e}t}}$ is injective, showing that (for odd primes $p$) $E^2=E^3=\dots=E^{p-1}$.

  Finally, since for all primes $p$ we now have $L^{p-1}_{\text{{\'e}t}}$ is an injection, the formula $d^{p-1}\gamma_{j+p}\mu_i\overset{\cdot}=\tau^{p-1}\lambda_{i+1}\gamma_j\mu_i$ follows from the same formula in $E^{p-1}[\tau^{-1}]$.
\end{proof}

The case for odd and even primes $p$ takes slightly different paths from here on.  The case $p=2$ is in many ways the simplest one but requires more care in that it turns out to be neither practical nor necessary to muddle through with the integral spectral sequence calculation: everything emanates from the torsion and $\tau$-inverted $\MHH$s together with minimal information about the integral $E^1$-page and an analysis of the Bockstein homology (called ``a Bockstein type complex'' in \Cref{sec:Dcomplex} since it also appears in the odd primary case in a slightly different guise) giving the answer --- with all multiplicative extensions --- without more ado.

\subsubsection{The even case}
\label{sec:even}
Let $p=2$. Since $\tau$ is a non-zero divisor in $\AAA_{\star}$, 
multiplication by $\tau$ gives the short exact sequence
$$\xymatrix{
0\ar[r] &
E^1_{f,d;w+1}
\ar[r]^{\tau}&
E^1_{f,d;w}
\ar[r]^q&
\bar E^1_{f,d;w}
0\ar[r] &
0.}
$$
We recall that the mod-$\tau$ spectral sequence collapses at $\bar E^2$ and has no multiplicative extensions: $\MHH_\star(\FF_2)/\tau\cong \bar E^2$.
Moving on to the abutment, 
the $\tau$-Bockstein on $\MHH_{\star}(\FF_{2})$ 
is the composite
\begin{equation}
\label{equation:Bocksteindefinition2}
\bar\partial
\colon
\MHH_{\ast+1,\ast}(\FF_{2})/\tau
\xrightarrow{\partial}
\MHH_{\ast,\ast+1}(\FF_{2})
\xrightarrow{q}
\MHH_{\ast,\ast+1}(\FF_{p})/\tau.
\end{equation}
Since \eqref{equation:Bocksteindefinition2} is a derivation we only need to know its value on the generators.
These are obtained from the integral $d^1$-differentials analyzed in \Cref{rem:firstdiff} as follows.
Since $\blambda_{i+1}$ is hit by the $d^1$-boundary $\lambda_{i+1}=[\xi_{i+1}]\in E^{1}_{1,2p^{i+1}-2;p^{i+1}-1}$ 
we get $\bar\partial\blambda_{i+1}=0$, 
and since $\gamma_{j+2}\bmu_i$ is hit by $\gamma_{j+2}\mu_i=[\tau_i|\dots|\tau_i]\in E^{1}_{j,j(2p^{i}-1);j(p^{i}-1)}$ 
and $d^1\gamma_{j+2}\mu_{i}=\tau\lambda_{i+1}\gamma_{j}\mu_{i}$ we deduce the following lemma.

\begin{lemma}
\label{lemma:nontrivialB}
The nontrivial $\tau$-Bocksteins on $\MHH_{\star}(\FF_{2})$ are generated by 
$$
\bar \partial \gamma_{j+2}\bmu_{i}=\blambda_{i+1}\gamma_{j}\bmu_{i}
$$
for all $i,j\geq 0$, i.e., $(\MHH_\star(\FF_2)/\tau,\bar\partial)=(C,D)$, where $(C,D)$ is the commutative differential graded algebra of \Cref{sec:Dcomplex}.
\end{lemma}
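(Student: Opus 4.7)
My plan is to deduce the lemma directly from the integral $d^1$-differential computed in \Cref{rem:firstdiff}, exploiting the standard principle that the $\tau$-Bockstein is encoded by ``the integral $d^1$ divided by $\tau$ and reduced modulo $\tau$.''

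I first use that $\bar\partial$ is an $\FF_2$-derivation on $\MHH_\star(\FF_2)/\tau$, being the composite of the connecting map from the cofiber sequence $\Sigma^{0,-1}\M\FF_2 \xrightarrow{\tau} \M\FF_2 \to \M\FF_2/\tau$ with the algebra quotient map. It therefore suffices to compute $\bar\partial$ on algebra generators. The vanishing $\bar\partial \blambda_{i+1} = 0$ and $\bar\partial \bmu_i = 0$ is immediate from exactness of the $\tau$-long exact sequence: both classes admit integral lifts to the filtration-$1$ classes $\lambda_{i+1}=[\xi_{i+1}]$ and $\mu_i=[\tau_i]$, which are automatically $d^1$-cycles in the integral Tor spectral sequence and hence permanent cycles.

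For the main claim $\bar\partial \gamma_{j+2}\bmu_i = \blambda_{i+1}\gamma_j\bmu_i$, the class $\gamma_{j+2}\bmu_i$ has integral representative $[\tau_i|\cdots|\tau_i]\in E^1_{j+2,\star}$, and \Cref{rem:firstdiff} supplies the key formula
\[ d^1 \gamma_{j+2}\mu_i = \tau\,\lambda_{i+1}\,\gamma_j\mu_i \]
over an algebraically closed field (where $\rho = 0$). This identifies the Bockstein: the relation $\tau\cdot\lambda_{i+1}\gamma_j\mu_i = 0$ in the abutment shows $\lambda_{i+1}\gamma_j\mu_i$ is $\tau$-torsion, and under the correspondence between the integral $d^1$ and the connecting map of the $\tau$-cofiber sequence this torsion class coincides with $\partial\gamma_{j+2}\bmu_i$. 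Reducing modulo $\tau$ yields the desired formula, which by the derivation property propagates to all of $\MHH_\star(\FF_2)/\tau$, giving the identification $(\MHH_\star(\FF_2)/\tau,\bar\partial)=(C,D)$.

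The hard part is justifying rigorously that the $E^1$-level calculation faithfully captures the homotopy-level Bockstein on $\gamma_{j+2}\bmu_i$, rather than merely its associated graded. This amounts to comparing the bar-complex filtration (underlying the integral Tor spectral sequence) with the $\tau$-adic filtration (underlying the long exact sequence of the cofiber of $\tau$), via the short exact sequence $0 \to E^\bullet \xrightarrow{\tau} E^\bullet \to \bar E^\bullet \to 0$ of spectral sequences. The collapse of the mod-$\tau$ spectral sequence at $\bar E^2$ proved in \Cref{theorem:MHHMFCtau}, together with the Chow-degree sparsity arguments used there, rules out any higher-filtration corrections to the formula, so the $E^\infty$-level identification lifts to the $\tau$-Bockstein at the level of homotopy.
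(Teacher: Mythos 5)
Your proof takes essentially the same approach as the paper's: use the derivation property of $\bar\partial$ and read the value on generators directly off the integral $d^1$-differentials from \Cref{rem:firstdiff}, with $\bar\partial\blambda_{i+1}=0$ coming from the fact that $\lambda_{i+1}=[\xi_{i+1}]$ is a $d^1$-cycle and the main formula from $d^1\gamma_{j+2}\mu_i=\tau\lambda_{i+1}\gamma_j\mu_i$. Your final paragraph correctly flags a subtlety the paper glosses over — that a priori the chain-level computation pins down only the associated graded of the abutment Bockstein — and your appeal to the $\bar E^2$-collapse and Chow-degree sparsity to rule out corrections is the right fix; note though that the potential corrections would live in \emph{lower} filtration (the spectral-sequence Bockstein produces the leading, i.e.\ highest-filtration, term), and in fact they are excluded here simply because a monomial in the $\gamma\bmu$'s alone has even topological degree while the target degree is odd.
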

Combined with \Cref{lem:fracturebyfs}, 
and using that the $\tau$-free element $\mu_{i}\in \MHH_{\star}(\FF_{p})$ 
maps to $\bmu_{i}\in\MHH_{\star}(\FF_{p})/\tau^{p-1}$, 
we deduce the following result.

\begin{corollary}
\label{corollary:Bhomology}
The Bockstein homology of $\MHH_{\star}(\FF_2)/\tau$ is isomorphic to the graded commutative 
$\FF_2$-algebra $\bigoplus_{i\geq 0}\LAMBDA(\bmu_{i})$.
\end{corollary}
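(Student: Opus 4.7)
The plan is to identify the Bockstein complex $(\MHH_\star(\FF_2)/\tau, \bar\partial)$ with the purely algebraic commutative differential graded algebra $(C, D)$ introduced in \Cref{sec:Dcomplex}, and then invoke the computation of $H^D$ from that section.

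First, Lemma~\ref{lemma:nontrivialB} already asserts $(\MHH_\star(\FF_2)/\tau, \bar\partial) = (C, D)$ at $p=2$ as commutative differential graded $\FF_2$-algebras: the underlying algebra was computed in \Cref{theorem:MHHMFCtau}, and the differential was matched on generators $\gamma_{j+2}\bmu_i$ (with the exterior generators $\blambda_{i+1}$ being $\bar\partial$-cycles since they already lift to $d^1$-boundaries in the integral Tor spectral sequence). Consequently the Bockstein homology of $\MHH_\star(\FF_2)/\tau$ coincides with $H^D$ as a graded commutative $\FF_2$-algebra.

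Second, I will apply \Cref{cor:HDandZD}, which yields the $\FF_p$-algebra isomorphism $H^D \cong \bigotimes_{i\geq 0}\FF_p[\bmu_i]/\bmu_i^p$. Specializing to $p=2$, the relation $\bmu_i^2 = 0$ identifies $\FF_2[\bmu_i]/\bmu_i^2$ with the exterior algebra $\LAMBDA_{\FF_2}(\bmu_i)$, yielding the desired form $\bigotimes_{i\geq 0}\LAMBDA_{\FF_2}(\bmu_i)$, which the statement writes using $\bigoplus$ (the coproduct in graded-commutative $\FF_2$-algebras). The translation between the $\bmu_i$ used on the algebraic side in \Cref{sec:Dcomplex} and the classes $\bmu_i = \sigma\tau_i$ in the Tor spectral sequence is exactly the one fixed in \Cref{theorem:MHHMFCtau} and used throughout, and the $\tau$-free lift $\mu_i \in \MHH_\star(\FF_2)$ reduces to $\bmu_i$ mod $\tau$, so the algebra identification is unambiguous.

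There is essentially no obstacle: all the heavy lifting has been completed in \Cref{sec:Dcomplex} (the purely algebraic Bockstein-type analysis, culminating in the $f$-cube decomposition of \Cref{lem:fracturebyfs} and the computation of \Cref{cor:HDandZD}) and in Lemma~\ref{lemma:nontrivialB} (where the geometric Bockstein is shown to match the algebraic differential $D$). The corollary is a formal deduction from these two inputs, and the only thing that would need writing out is the one-line specialization $p=2$ together with the chain of isomorphisms above.
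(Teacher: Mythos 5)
Your proposal is correct and follows essentially the same route as the paper: identify $(\MHH_\star(\FF_2)/\tau,\bar\partial)$ with the algebraic complex $(C,D)$ of \Cref{sec:Dcomplex} via Lemma~\ref{lemma:nontrivialB}, then read off $H^D$ from the $f$-cube decomposition; the paper cites Lemma~\ref{lem:fracturebyfs} directly while you cite the downstream Corollary~\ref{cor:HDandZD}, but these encode the same computation. Your handling of the $\bigoplus$ notation (coproduct of graded-commutative algebras, i.e.\ tensor product) and the $p=2$ identification $\FF_2[\bmu_i]/\bmu_i^2\cong\LAMBDA_{\FF_2}(\bmu_i)$ are both correct.
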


\Cref{corollary:Bhomology} lets us conclude that the $\tau$-torsion classes in $\MHH_{\star}(\FF_2)$ 
are not $\tau$-divisible.
The $\tau$-torsion in $\MHH_{\star}(\FF_2)$ agrees with the image of 
$\partial\colon \MHH_{\star}(\FF_2)/\tau\to \MHH_{\star}(\FF_2)$ and 
maps injectively via 
$q\colon \MHH_{\star}(\FF_2)\to \MHH_{\star}(\FF_2)/\tau$.
\vspace{0.1in} 

There is a naturally induced commutative diagram with exact rows
$$
\xymatrix{
0\ar[r]&(\tau\text{-torsion})\ar[r]\ar[d]^q_\cong & \MHH_{\star}(\FF_2)\ar[r]\ar[d]^q & 
\FF_2[\tau,\mu_0,\mu_1,\dots]/(\mu_{i}^2-\tau\mu_{i+1})\ar[r] & 0\\
0\ar[r] & \textrm{im}\bar\partial\ar[r] & 
\bigotimes_{i\geq 0}\GAMMA(\bmu_{i})\otimes \LAMBDA(\blambda_{i+1}). & &
}
$$
More elegantly, 
using \Cref{cor:HDandZD}, 
we have a pullback diagram of commutative $\FF_2[\tau]$-algebras
$$\xymatrix{
   \MHH_{\star}(\FF_2)\ar[r]\ar[d]&\FF_p[\tau,\mu_i]
   /\mu_i^2-\tau\mu_{i+1}\ar[d]\\
   \FF_2[\tau,\bmu_{i},\x_{S,f}]
   /\mathcal{I}
   \ar[r]&\FF_2[\tau,\bmu_i]
   /(\bmu_i^2,\tau)}
 $$
with indexation $i\in\NN$, $(S,f)\in K$ (see \Cref{def:JandK}), and 
$$\mathcal{I}=
(\tau,\bmu_{i}^2,\x_{S,f}\cdot\x_{T,g}-\sum_{t_{f+g}\not=u\in\supp(f+g)-S\cup T}\epsilon_{u}\cdot\x_{S\cup T\cup\{u\},f+g}).$$
Here $\mu_i$ maps to $\bmu_i$ and $\x_{S,f}$ maps to zero.
When we are done with the odd case, we'll see that by replacing $2$ with $p$, we have the general formula.

\subsubsection{The odd case}
\label{sec:odd}
Let $p$ be an odd prime.
The first task is using our knowledge of $d^{p-1}$ to calculate $E^p$.
Consider short exact sequence
$$\xymatrix{
0 
\ar[r]&
E^{p-1}_{f,d;w+p-1}
\ar[r]^-{\tau^{p-1}}&
E^{p-1}_{f,d;w}
\ar[r]^q&
\bar E^{p-1}_{f,d;w}
\ar[r]&
0}
$$
and the injection
$$L^{p-1}_{\text{{\'e}t}}\colon E^{p-1}_{f,d;w}\to E^{p-1}_{f,d;w}[\tau^{-1}].$$

\begin{defn}
  For $p\leq r$, let $P(r)$ be the conjunction of the propositions $P(r)_1$, $P(r)_2$, and $P(r)_3$ defined as follows:
  \begin{enumerate}
  \itemindent=2em%
  \item[$P(r)_1$:] $\xymatrix{
E^{r}_{f,d;w+p-1}
\ar[r]^-{\tau^{p-1}}&
E^{r}_{f,d;w}
\ar[r]^q&
\bar E^{r}_{f,d;w}}
$ is exact,
\item[$P(r)_2$:] in $E^{r}_{f,d;w}$ we have $\ker L^{r}_{\text{{\'e}t}} 
  =\ker \tau^{p-1}$, and
\item[$P(r)_3$:] for $p\leq j<r$ the $jth$ differential $d^j$ is trivial (so that $E^p=E^r$).
  \end{enumerate}
\end{defn}
To simplify notation, consider the $\FF_p$-algebra $C=\bigotimes_{i\geq 0}\GAMMA_{\FF_{p}}(\bmu_{i})
\otimes 
\LAMBDA_{\FF_{p}}(\blambda_{i+1})$ (with the above isomorphism $\bar E^r\cong C[\tau]/\tau^{p-1}$ for $r\geq 2$) and 
the derivation $D\colon C\to C$ generated by $D(\gamma_{j+p}\bmu_i)=\blambda_{i+1}\gamma_j\bmu_i$.
Let $B^D=\im D$, $Z^D=\ker D$ and $H^D=Z^D/B^D$.
\begin{lemma}
  \label{lem:P(p)}
  The proposition $P(p)$ is true.

Then $E^p$ is isomorphic to $Z^D[\tau]/\tau^{p-1}B^D[\tau]$ and under this isomorphism $E^p/\ker L^{p}_{\text{{\'e}t}}$ is isomorphic to $H^D[\tau]$.

Furthermore, the map $q\colon E^p\to\bar E^p\cong C[\tau]/\tau^{p-1}$ factors over $Z^D[\tau]/\tau^{p-1}\subseteq C[\tau]/\tau^{p-1}$ and the map
$$\ker\{\tau^{p-1}\colon E^p\to E^p\}\subseteq E^p\to \bar E^p\cong C[\tau]/\tau^{p-1}$$
is an injection factoring as an isomorphism $\ker\{\tau^{p-1}\colon E^p\to E^p\}\cong B^D[\tau]/\tau^{p-1}$ followed by the injection $B^D[\tau]/\tau^{p-1}\subseteq C[\tau]/\tau^{p-1}$.
Summing up, the resulting diagram of commutative $\FF_p[\tau]$-algebras
$$\xymatrix{E^p\ar[r]\ar[d]&H^D[\tau]\ar[d]\\
  Z^D[\tau]/\tau^{p-1}\ar[r]&H^D[\tau]/\tau^{p-1}}
  $$ is a pullback.
\end{lemma}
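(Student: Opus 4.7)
The plan is to identify the $(p-1)$-st differential on $E^{p-1}$ with $\tau^{p-1}$ times the derivation $D$ of \Cref{sec:Dcomplex}, and then to deduce every assertion by direct bookkeeping on the resulting $\tau$-free description. The previous lemma gives $E^2=E^3=\cdots=E^{p-1}$ and, under the natural identification of generators, an isomorphism $E^{p-1}\cong \FF_p[\tau]\otimes C$ with $C$ as in \Cref{sec:Dcomplex}. Multiplicativity of the Tor spectral sequence makes $d^{p-1}$ an $\FF_p[\tau]$-linear derivation, so the formula $d^{p-1}(\gamma_{j+p}\mu_i)\overset{\cdot}= \tau^{p-1}\lambda_{i+1}\gamma_j\mu_i$ supplied by the previous lemma, together with $d^{p-1}(\lambda_{i+1})=0$ (filtration $1-(p-1)\leq -1$ for $p$ odd), determines $d^{p-1}$ completely; after rescaling the $\mu_i$ by units of $\FF_p$ one obtains the clean identification $d^{p-1}=\tau^{p-1}D$.

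Since $\FF_p[\tau]\otimes C$ is $\tau$-free, the equation $d^{p-1}=\tau^{p-1}D$ yields $\ker d^{p-1}=Z^D[\tau]$ and $\im d^{p-1}=\tau^{p-1}B^D[\tau]$, whence $E^p\cong Z^D[\tau]/\tau^{p-1}B^D[\tau]$. Write a typical class of $E^p$ as $\zeta=\sum_k \tau^k z_k$ with $z_k\in Z^D$. Then $q(\zeta)=0$ in $\bar E^p$ iff $z_k=0$ for $0\leq k<p-1$ (by $\tau$-freeness of $C[\tau]$), iff $\zeta\in\tau^{p-1}Z^D[\tau]/\tau^{p-1}B^D[\tau]=\im\tau^{p-1}$; this gives $P(p)_1$. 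For $P(p)_2$, if $\tau^N\zeta=0$ in $E^p$ then every $z_k$ must lie in $B^D$ (again by $\tau$-freeness of $Z^D[\tau]$), so $\zeta\in B^D[\tau]/\tau^{p-1}B^D[\tau]$, and any such class is killed by $\tau^{p-1}$. The reverse inclusion $\ker\tau^{p-1}\subseteq\ker L^{p}_{\text{{\'e}t}}$ is immediate, so $\ker L^{p}_{\text{{\'e}t}}=\ker\tau^{p-1}$. Clause $P(p)_3$ is vacuous.

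The remaining structural claims follow formally. One has $E^p/\ker L^{p}_{\text{{\'e}t}}=Z^D[\tau]/B^D[\tau]\cong H^D[\tau]$; the map $q$ factors as $E^p\twoheadrightarrow Z^D[\tau]/\tau^{p-1}\hookrightarrow C[\tau]/\tau^{p-1}=\bar E^p$, and its restriction to $\ker\tau^{p-1}$ is the inclusion $B^D[\tau]/\tau^{p-1}B^D[\tau]\hookrightarrow C[\tau]/\tau^{p-1}$. Finally, the two short exact sequences
\[
0\to H^D[\tau]\xrightarrow{\tau^{p-1}} E^p\to Z^D[\tau]/\tau^{p-1}\to 0 \quad\text{and}\quad 0\to B^D[\tau]/\tau^{p-1}B^D[\tau]\to E^p\to H^D[\tau]\to 0
\]
combine with the analogous sequences for the abstract pullback $P$ of $Z^D[\tau]/\tau^{p-1}\to H^D[\tau]/\tau^{p-1}\leftarrow H^D[\tau]$, so that a five-lemma comparison identifies $E^p$ with $P$.

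The delicate point is expected to be the identification $d^{p-1}=\tau^{p-1}D$: one must check that no hidden correction terms appear in $d^{p-1}$ beyond the leading formula on $\gamma_{j+p}\mu_i$, and that a single choice of rescaling of the $\mu_i$ simultaneously yields the normalization for all $i$. The $\FF_p[\tau]$-linearity and derivation property, combined with the weight count used to establish the differential in the previous lemma, should suffice to pin these corrections down uniformly; once that is in hand, every remaining step is a routine calculation with $\tau$-free modules.
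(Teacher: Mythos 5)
Your proposal is correct and follows essentially the same route as the paper: both identify $E^{p-1}$ with a $\tau$-free polynomial ring over $C$, factor $d^{p-1}$ as $\tau^{p-1}$ times a Bockstein-type derivation, and read off $E^p$, the kernel/image of $\tau^{p-1}$, and the pullback square by tracking $\tau$-degrees. The only cosmetic differences are that the paper grades by $\tau^{p-1}$ via the "degree-zero part" $Q$ while you grade directly by $\tau$, and the paper reads off the pullback from the explicit decomposition $E^p = Z^{\bar\partial}\oplus\tau^{p-1}H^{\bar\partial}[\tau^{p-1}]$ whereas you invoke the five lemma; also, the normalization $d^{p-1}=\tau^{p-1}D$ that you flag as delicate is most cleanly achieved by rescaling the $\lambda_{i+1}$ rather than the $\mu_i$ (the paper sidesteps this by simply \emph{defining} $\bar\partial:=d^{p-1}/\tau^{p-1}$).
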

\begin{proof}
  For odd $p$, the first thing to notice is that $E^{p-1}$ is a free $\FF_p[\tau^{p-1}]$-module and that the differential factors $d^{p-1}=\tau^{p-1}{\bar\partial}$ where ${\bar\partial}$ (aka the Bockstein) is homogeneous with respect to the $\tau^{p-1}$-grading on $E^{p-1}$ and ${\bar\partial}^2=0$.
Let $Q$ be the degree zero part of $E^{p-1}$ (so that
  $E^{p-1}= Q[\tau^{p-1}]$ and
  $Q\subseteq E^{p-1}\to E^{p-1}/\tau^{p-1}\cong \bar E^{p-1}=\bar E^p$ is an isomorphism).
  If $Z^{\bar\partial}=\ker\{{\bar\partial}\colon Q\to Q\}$, then $\ker d^{p-1}=Z^{\bar\partial}[\tau^{p-1}]$, whereas if $B^{\bar\partial}=\im\{{\bar\partial}\colon Q\to Q\}$, then $\im \, d^{p-1}=\tau^{p-1}B^{\bar\partial}[\tau^{p-1}]$, 
  and if $H^{\bar\partial}=Z^{\bar\partial}/B^{\bar\partial}$, then (as an $\FF_p[\tau^{p-1}]$-module)
  $$E^p= Z^{\bar\partial}\oplus\tau^{p-1}H^{\bar\partial}[\tau^{p-1}],
  $$
  and $q\colon E^p\to\bar E^p$ may be identified with the composite
  $$Z^{\bar\partial}\oplus\tau^{p-1}H^{\bar\partial}[\tau^{p-1}]\to Z^{\bar\partial}\subseteq Q
  $$
  of the projection to the degree zero part followed by the inclusion.
  Hence
  $$\ker q=\tau^{p-1}H^{\bar\partial}[\tau^{p-1}]=\im\{\tau^{p-1}\colon E^p\to E^p\},$$ 
  $$\ker\{\tau^{p-1}\colon E^p\to E^p\}= B^{\bar\partial}=\ker L^{p}_{\text{{\'e}t}},
  $$
  and
  $$E^p/\ker \tau^{p-1} \cong H^{\bar\partial}[\tau^{p-1}]\subseteq H^{\bar\partial}[\tau^{\pm(p-1)}]\cong E^p[\tau^{-1}]
  $$
  Since $P(r)_3$ is vacuous in this case, we have proven $P(p)$.

  The formulation with the pullback follows when writing the above out as $\FF_p[\tau]$-algebras, 
  so that $\ker d^{p-1}=Z^D[\tau]$ and $\im \, d^{p-1}=\tau^{p-1}B^D[\tau]$ and remembering that $\bar E^{p-1}\cong C[\tau]/\tau^{p-1}$.
\end{proof}
\begin{lemma}
  \label{lem:Einftyanal1}
  For all $r\geq p$ the proposition $P(r)$ is true.  Hence,
  \begin{enumerate}
  \item $E^\infty=E^p$,
  \item the algebra map from the $\FF_p[\tau^{p-1}]$-free part to the $\tau^{p-1}$-localization
    $$L_{\text{{\'e}t}}\colon \left[\MHH_{\star}(\FF_{p})\right]/\ker\tau^{p-1} \to 
    \MHH_{\star}(\FF_{p})[\tau^{-1}]\cong \FF_{p}[\tau^{\pm 1},\mu_{i}]_{i\geq 0}/(\mu_{i}^{p}-\tau^{p-1}\mu_{i+1})
    $$ is injective so that $\left[\MHH_{\star}(\FF_{p})\right]/\ker\tau^{p-1}\cong \FF_p[\tau,\mu_i]_{i\geq 0}/(\mu_{i}^{p}-\tau^{p-1}\mu_{i+1})$,
  \item the  algebra map induced by $q\colon \MHH(\FF_{p})\to \MHH(\FF_{p})/\tau^{p-1}
    $
    $$q\colon \left[\MHH_{\star}(\FF_{p})\right]/\im\,\tau^{p-1}\to \MHH_{\star}(\FF_{p})/\tau^{p-1}\cong
(\bigotimes_{i\geq 0}
\GAMMA_{\FF_{p}}(\bmu_{i})
\otimes 
\LAMBDA_{\FF_{p}}(\blambda_{i+1}))
\otimes
\FF_{p,\tau}
    $$
    is injective, and
  \item the composite $\ker\tau^{p-1}\subseteq \MHH_{\star}(\FF_{p})\to 
  \left[\MHH_{\star}(\FF_{p})\right]/\im\,\tau^{p-1}$ is injective.
  \end{enumerate}
\end{lemma}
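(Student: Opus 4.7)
The plan is to prove $P(r)$ for every $r \geq p$ by induction on $r$, with the base case $P(p)$ supplied by \Cref{lem:P(p)}. Once $P(r)$ is known for all $r \geq p$, conclusion (1) is immediate: each inductive step will show $d^r = 0$, so $E^p = E^\infty$. Conclusions (2)--(4) then follow by lifting the pullback description of $E^p$ from \Cref{lem:P(p)} to $\MHH_\star(\FF_p)$, using \Cref{lem:extension} to identify the lone multiplicative extension $\mu_i^p = \tau^{p-1}\mu_{i+1}$.

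For the inductive step, assume $P(r)$ holds and take $x \in E^r$. Since the mod-$\tau^{p-1}$ spectral sequence collapses at $\bar E^2$ by \Cref{lem:MHH/taup-1}, one has $q(d^r x) = d^r(qx) = 0$, and $P(r)_1$ produces $y \in E^r$ with $d^r x = \tau^{p-1} y$. On the other hand, \Cref{lemma:Einfinitytauinverted} says the $\tau$-inverted spectral sequence collapses at $E^p$, so $L^r_{\text{{\'e}t}}(d^r x) = 0$, and $P(r)_2$ yields $\tau^{p-1}(d^r x) = 0$, i.e., $\tau^{2(p-1)} y = 0$. Since $\tau$ is invertible on the \'etale localization, this forces $L^r_{\text{{\'e}t}}(y) = 0$, so $y \in \ker \tau^{p-1}$ by $P(r)_2$ again, whence $d^r x = \tau^{p-1} y = 0$. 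Hence $E^{r+1} = E^r$, and all three components of $P(r+1)$ are inherited from $P(r)$ together with the freshly established vanishing $d^r = 0$.

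For conclusions (2)--(4), one uses the pullback description $E^p \cong Z^D[\tau]/\tau^{p-1} B^D[\tau]$ obtained in \Cref{lem:P(p)}, together with $H^D \cong \bigotimes_{i \geq 0} \FF_p[\bmu_i]/\bmu_i^p$ from \Cref{cor:HDandZD}. For (2), the $\tau^{p-1}$-free quotient on $E^\infty$ is $H^D[\tau]$; lifting $\bmu_i \mapsto \mu_i$ converts the relation $\bmu_i^p = 0$ on the $E^\infty$-page into the multiplicative extension $\mu_i^p = \tau^{p-1}\mu_{i+1}$ via \Cref{lem:extension}, yielding the stated presentation and its injection into $\MHH_\star(\FF_p)[\tau^{-1}]$ computed in \Cref{theorem:tauinvertedMHH}. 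Conclusion (3) is the $P(\infty)_1$ exactness $\ker q = \im \tau^{p-1}$ on $E^\infty$, lifted through the skeletal filtration of $\MHH_\star(\FF_p)$. For (4), the pullback structure shows that in $E^\infty$ an element $(h,z) \in H^D[\tau] \times_{H^D[\tau]/\tau^{p-1}} Z^D[\tau]/\tau^{p-1}$ satisfying both $\tau^{p-1}(h,z) = 0$ and $(h,z) = \tau^{p-1}(h',z')$ forces $h = \tau^{p-1} h' = 0$ by injectivity of $\tau^{p-1}$ on $H^D[\tau]$, hence $(h,z) = (0,0)$; a filtration induction transfers this to $\MHH_\star(\FF_p)$.

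The principal obstacle is the inductive step, specifically the double use of $P(r)_2$ to upgrade $\tau^{p-1}(d^r x) = 0$ to $d^r x = 0$; this exploits the stabilization of $\tau$-torsion afforded by $\ker L^r_{\text{{\'e}t}} = \ker \tau^{p-1}$. A secondary care is the filtration-induction argument needed to transport the pullback identities from $E^\infty$ to the abutment $\MHH_\star(\FF_p)$, which is tractable only because \Cref{lem:extension} supplies the unique multiplicative extension.
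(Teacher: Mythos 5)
Your proof takes essentially the same route as the paper's. The inductive step is correct, though you make one redundant move: after obtaining $L^r_{\text{\'et}}(d^r x)=0$ and $d^r x=\tau^{p-1}y$, it suffices to observe directly that $\tau^{p-1}L^r_{\text{\'et}}(y)=0$, hence $L^r_{\text{\'et}}(y)=0$ since $\tau$ is a unit in the localization, and then apply $P(r)_2$ once to $y$ to get $\tau^{p-1}y=d^rx=0$; your detour through $P(r)_2$ applied first to $d^rx$ and the auxiliary equation $\tau^{2(p-1)}y=0$ is valid but unnecessary. Your sketch of how (1)--(4) follow matches the structure the paper relies on (and which it dismisses with ``the other points then follow directly''), including the observation that $\ker\tau^{p-1}=\ker L_{\text{\'et}}$ implies $\ker\tau^{p-1}\cap\im\tau^{p-1}=0$; one small simplification is that conclusion (3) is actually automatic from the long exact sequence of the cofiber sequence for $\tau^{p-1}$, without needing a filtration argument.
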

\begin{proof}
  By Lemma~\ref{lem:P(p)} we have $P(p)$ so we only need to show that $P(r)$ implies $P(r+1)$ for all $r\geq p$.
  Note that if $P(r)$ and $P(r+1)_3$ are true, then $P(r+1)$ is true.  Recall from Lemma~\ref{lemma:Einfinitytauinverted} and Lemma~\ref{lem:MHH/taup-1} that the $r$th differentials in both the localized and reduced $\Tor$-spectral sequences are trivial.
  
  Assume $P(r)$ and consider $x\in E^{r}_{f,d;w}$.  From the fact that $d^r\colon\bar E^r\to\bar E^r$ is trivial so that $0=d^rqx=qd^rx$ we get that there is a $y\in E^{r}_{f-r,d+r-1;w+p-1}$ so that $P(r)_1$ implies that $d^rx=\tau^{p-1}y$.
  Since $d^r\colon E^r[\tau^{-1}]\to\bar E^r[\tau^{-1}]$ is trivial we get that
  $0=d^rL^{r}_{\text{{\'e}t}}x=L^{r}_{\text{{\'e}t}}d^rx=L^{r}_{\text{{\'e}t}}\tau^{p-1}y=\tau^{p-1}L^{r}_{\text{{\'e}t}}y$ so that $0=L^{r}_{\text{{\'e}t}}y$ and $P(r)_2$ implies that $0=\tau^{p-1}y=d^rx$.

  The other points then follow directly, where in the last point we have used that 
  $\ker\tau^{p-1}=\ker L_{\text{{\'e}t}}$ gives that $\ker\tau^{p-1}\cap\im\,\tau^{p-1}=0$.
\end{proof}

Summing up in the language of Lemma~\ref{lem:P(p)}, we have achieved a pullback of commutative $\FF_p[\tau]$-algebras

\begin{equation}
\label{equation:firstEinfinitypullback}
\xymatrix{
E^\infty\ar[r]\ar[d]&H^D[\tau]\ar[d]\\
Z^D[\tau]/\tau^{p-1}\ar[r]&H^D[\tau]/\tau^{p-1}.
}
\end{equation}
Moreover, 
the pullback survives to the abutment in the sense that the maps out of $E^\infty$ are the associated graded versions of maps induced from maps of commutative ring spectra. 

We now set out analyzing $Z^D$ and $H^D$.

 \subsection{Multiplicative extensions}
 \label{sec:multex}

From Lemma~\ref{lem:Einftyanal1} we deduced the pullback \eqref{equation:firstEinfinitypullback} of commutative 
$\FF_p[\tau]$-algebras, 
which, 
given the information of Corollary~\ref{cor:HDandZD}, 
takes the form
$$
\xymatrix{
E^\infty\ar[r]\ar[d]&\FF_p[\tau,\mu_i]/\mu_i^p\ar[d]\\
\FF_p[\tau,\bmu_{i},\x_{S,f}]/\overline{\mathcal{I}}
\ar[r] & \FF_p[\tau,\bmu_i]/(\bmu_i^p,\tau^{p-1}) 
}
$$
with indexation $i\in\NN$, $(S,f)\in K$, 
and 
$$\overline{\mathcal{I}}=
(\tau^{p-1},\bmu_{i}^p,\x_{S,f}\cdot\x_{T,g}-\sum_{t_{f+g}\not=
u\in\supp(f+g)-S\cup T}\epsilon_{u}\cdot\x_{S\cup T\cup\{u\},f+g}).$$
Here
$\mu_i$ maps to $\bmu_i$ and $\x_{S,f}$ maps to zero.
Moreover, 
the pullback survives to the abutment in the sense that the maps out of $E^\infty$ are the associated graded versions 
of maps induced from maps of commutative ring spectra.
\vspace{0.1in}

In the abutment, 
we know that $\mu_i^p=\tau^{p-1}\mu_{i+1}$, but can there be further extensions?  
Since $\mu_i$ maps to $\bmu_i$, 
such an extension must be witnessed when passing from the associated graded 
$\FF_p[\tau,\bmu_{i},\x_{S,f}]/\overline{\mathcal{I}}$ to $\MHH_\star(\FF_p)/\tau^{p-1}$, 
but this we have seen in the mod $\tau^{p-1}$-calculation is not the case.  
In conclusion, 
we have shown the following result.
\begin{theorem}
  There is an isomorphism of graded commutative $\FF_p[\tau]$-algebras
  $$\MHH_*(\FF_p)\cong \FF_p[\tau,\mu_i,\x_{S,f}]_{i\in\NN, (S,f)\in K}/\mathcal{I}
  $$
  where the indexing set $K$ is given in Definition~\ref{def:JandK} and $\mathcal{I}$ is the ideal generated by
  \begin{itemize}
  \item $\mu_i^p-\tau^{p-1}\mu_{i+1}$,
  \item $\tau^{p-1}\x_{S,f}$, and
  \item $\x_{S,f}\cdot\x_{T,g}-\sum_u\epsilon_{u,S,T,f,g}\cdot\x_{S\cup T\cup\{u\},f+g}$ where the sum runs over all 
  elements $u\notin S\cup T$ so that $(f+g,S\cup T\cup\{u\})\in K$, 
  and the coefficient $\epsilon_{u,S,T,f,g}\in\FF_p$ is given in Definition~\ref{def:somenumbers}.
  \end{itemize}

\end{theorem}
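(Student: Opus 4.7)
The plan is to assemble the theorem as a corollary of the work already done: the pullback square coming from the \'etale/mod-$\tau^{p-1}$ factorization of the Tor spectral sequence, together with the algebraic identification of $Z^D$ and $H^D$ in Corollary~\ref{cor:HDandZD}. By Lemma~\ref{lem:Einftyanal1} we have $E^\infty = E^p$ with the pullback
\[
\xymatrix{
E^\infty\ar[r]\ar[d]&H^D[\tau]\ar[d]\\
Z^D[\tau]/\tau^{p-1}\ar[r]&H^D[\tau]/\tau^{p-1},
}
\]
and Corollary~\ref{cor:HDandZD} identifies the right hand column with $\FF_p[\tau,\mu_i]/\mu_i^p$ and the lower left with $\FF_p[\tau,\bmu_{i},\x_{S,f}]/\overline{\mathcal{I}}$, where $\x_{S,f}=D\chi_{S,f}$ and $\overline{\mathcal{I}}$ is the ideal displayed in Section~\ref{sec:multex}. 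First I would record that this pullback already gives the stated ring on the $E^\infty$ page modulo the presence of $\tau^{p-1}$: the three families of relations --- $\mu_i^p\equiv 0$, $\tau^{p-1}\x_{S,f}=0$, and the quadratic $\x\cdot\x$ relation --- are literally read off from $\overline{\mathcal{I}}$ and the column $\FF_p[\tau,\mu_i]/\mu_i^p$.

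Next I would lift everything to the abutment. The classes $\mu_i=\sigma\tau_i$ already exist in $\MHH_\star(\FF_p)$ by the suspension construction, and by Step~5 of the outline the $\tau$-torsion classes $\x_{S,f}=\partial\chi_{S,f}$ lift canonically via the boundary map $\partial\colon \MHH_{*+1,*}(\FF_p)/\tau^{p-1}\to\MHH_{*,*+p-1}(\FF_p)$, which by Lemma~\ref{lem:Einftyanal1}(4) embeds $\ker\tau^{p-1}$ into $\MHH_\star(\FF_p)/\im\,\tau^{p-1}$. The relation $\tau^{p-1}\x_{S,f}=0$ is then automatic since $\x_{S,f}\in\partial(\MHH_\star(\FF_p)/\tau^{p-1})$. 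The multiplicative relation among the $\x_{S,f}$ follows by applying the injective comparison $q\colon\ker\tau^{p-1}\hookrightarrow\MHH_\star(\FF_p)/\tau^{p-1}$ from Lemma~\ref{lem:Einftyanal1}(3) and invoking the identity $D\chi_{S,f}\cdot D\chi_{T,g}=\sum_u\epsilon_u D\chi_{S\cup T\cup\{u\},f+g}$ of Lemma~\ref{lem:chismultiply}. The non-torsion relation $\mu_i^p=\tau^{p-1}\mu_{i+1}$ is Lemma~\ref{lem:extension}, already present at the $E^\infty$ stage via Theorem~\ref{theorem:tauinvertedMHH}.

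The main (and essentially the only) remaining point is to rule out further hidden multiplicative extensions when passing from $E^\infty$ to $\MHH_\star(\FF_p)$. The crucial observation is that the surjection $q\colon \MHH_\star(\FF_p)\to\MHH_\star(\FF_p)/\tau^{p-1}$ sends $\mu_i\mapsto\bmu_i$ and sends each $\x_{S,f}$ to (a unit multiple of) the corresponding element of $B^D\subseteq C$. Any hidden extension in the abutment of a relation holding on $E^\infty$ would therefore be detected in the mod-$\tau^{p-1}$ abutment, but Theorem~\ref{theorem:MHHMFCtau} computes $\MHH_\star(\FF_p)/\tau^{p-1}$ exactly as its associated graded $C\otimes\FF_{p,\tau}$, with no hidden extensions. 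The hard part is checking that this rules out extensions in \emph{every} bidegree, including the $\tau$-torsion sector; this is handled by noting that the pullback square of Lemma~\ref{lem:Einftyanal1} is a pullback of $\FF_p[\tau]$-algebras whose two arms have been computed without hidden extensions (\'etale side by Theorem~\ref{theorem:tauinvertedMHH}, reduced side by Theorem~\ref{theorem:MHHMFCtau}), so extensions in $\MHH_\star(\FF_p)$ are constrained to match those already visible in the pullback.

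Combining these observations, the natural algebra map
\[
\FF_p[\tau,\mu_i,\x_{S,f}]_{i\in\NN,(S,f)\in K}/\mathcal{I}\longrightarrow\MHH_\star(\FF_p)
\]
sending the presentation generators to their named lifts is well-defined (all relations hold by the preceding paragraphs), surjective (its image hits $Z^D[\tau]/\tau^{p-1}B^D[\tau]\cong E^\infty$ modulo filtration), and injective (by the pullback argument ruling out additional collapses). This gives the stated isomorphism.
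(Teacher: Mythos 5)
Your proof is correct and follows the paper's approach. You assemble the same pullback of $\FF_p[\tau]$-algebras from Lemma~\ref{lem:Einftyanal1}, identify its corners via Corollary~\ref{cor:HDandZD}, lift the presentation generators to the named classes $\mu_i=\sigma\tau_i$ and $\x_{S,f}=\partial\chi_{S,f}$, and rule out hidden multiplicative extensions by observing that any such extension would be witnessed in the mod-$\tau^{p-1}$ abutment (Theorem~\ref{theorem:MHHMFCtau}), which was computed with none --- exactly the argument in the paper's Section~\ref{sec:multex}. Your verification of the torsion product relation via the injection $q\colon\ker\tau^{p-1}\hookrightarrow\MHH_\star(\FF_p)/\tau^{p-1}$ and Lemma~\ref{lem:chismultiply} is a slightly more explicit rendering of the same mechanism; the one small imprecision is the phrase ``already present at the $E^\infty$ stage'' applied to $\mu_i^p=\tau^{p-1}\mu_{i+1}$, which is a multiplicative extension from $E^\infty$ to the abutment rather than an $E^\infty$-relation, and which should also be noted to rest partly on the $p=2$ analysis of Section~\ref{sec:even} since Lemma~\ref{lem:Einftyanal1} is framed for odd $p$.
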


\paragraph{Acknowledgments}
We thank Markus Spitzweck for his help with an argument in \Cref{sec:modtau}.
The authors acknowledge the support of the Centre for Advanced Study at the Norwegian Academy of Science and Letters 
in Oslo, Norway, which funded and hosted our research project ``Motivic Geometry" during the 2020/21 academic year.
This research was supported by grants from the RCN Frontier Research Group Project no. 250399 ``Motivic Hopf Equations" 
and no. 312472 ``Equations in Motivic Homotopy."

\begin{footnotesize}
\bibliographystyle{plain}
\bibliography{mhh}

\begin{thebibliography}{10}

\bibitem{zbMATH02221879}
V.~{Angeltveit} and J.~{Rognes}.
\newblock {Hopf algebra structure on topological Hochschild homology}.
\newblock {\em {Algebr. Geom. Topol.}}, 5:1223--1290, 2005.

\bibitem{zbMATH05010546}
C.~{Ausoni}.
\newblock {Topological Hochschild homology of connective complex \(K\)-theory}.
\newblock {\em {Am. J. Math.}}, 127(6):1261--1313, 2005.

\bibitem{2020arXiv200304006B}
T.~{Bachmann}, E.~{Elmanto}, and P.~A. {{\O}stv{\ae}r}.
\newblock {Stable motivic invariants are eventually {\'e}tale local}.
\newblock {\em arXiv e-prints}, page arXiv:2003.04006, March 2020.

\bibitem{2017arXiv171103061B}
T.~{Bachmann} and M.~{Hoyois}.
\newblock {\em {Norms in motivic homotopy theory}}, volume 425.
\newblock {Ast\'erisque}, Paris: Soci\'et\'e Math\'ematique de France (SMF),
  2021.

\bibitem{2020arXiv201202687B}
T.~{Bachmann}, H.~J. {Kong}, G.~{Wang}, and Z.~{Xu}.
\newblock {The Chow $t$-structure on the $\infty$-category of motivic spectra}.
\newblock {\em arXiv e-prints; to appear in Ann. of Math.}, December 2020.

\bibitem{MR3856165}
M.~Behrens and D.~Wilson.
\newblock A {$C_2$}-equivariant analog of {M}ahowald's {T}hom spectrum theorem.
\newblock {\em Proc. Amer. Math. Soc.}, 146(11):5003--5012, 2018.

\bibitem{MR2651551}
A.~J. Blumberg, R.~L. Cohen, and C.~Schlichtkrull.
\newblock Topological {H}ochschild homology of {T}hom spectra and the free loop
  space.
\newblock {\em Geom. Topol.}, 14(2):1165--1242, 2010.

\bibitem{MB:THHZF}
M.~B{\"o}kstedt.
\newblock The topological {H}ochschild homology of $\mathbb{Z}$ and
  $\mathbb{Z}/p$.
\newblock Preprint, Bielefeld, 1986.

\bibitem{MR2153114}
D.~Dugger and D.~C. Isaksen.
\newblock Motivic cell structures.
\newblock {\em Algebr. Geom. Topol.}, 5:615--652, 2005.

\bibitem{zbMATH02028920}
B.~I. {Dundas}, O.~{R\"ondigs}, and P.~A. {{\O}stv{\ae}r}.
\newblock {Motivic functors}.
\newblock {\em {Doc. Math.}}, 8:489--525, 2003.

\bibitem{zbMATH03080175}
S.~{Eilenberg} and S.~{MacLane}.
\newblock {On the groups \(H(\Pi,n)\). I}.
\newblock {\em {Ann. Math. (2)}}, 58:55--106, 1953.

\bibitem{2017arXiv171106258E}
E.~{Elmanto}, M.~{Levine}, M.~{Spitzweck}, and P.~A. {{\O}stv{\ae}r}.
\newblock {Algebraic Cobordism and {\'E}tale Cohomology}.
\newblock {\em arXiv e-prints; to appear in G\&T}, November 2017.

\bibitem{arXiv:1701.04877}
B.~{Gheorghe}.
\newblock {The motivic cofiber of \(\tau\)}.
\newblock {\em {Doc. Math.}}, 23:1077--1127, 2018.

\bibitem{zbMATH07303328}
L.~{Hesselholt} and T.~{Nikolaus}.
\newblock {Topological cyclic homology}.
\newblock In {\em Handbook of homotopy theory}, pages 619--656. Boca Raton, FL:
  CRC Press, 2020.

\bibitem{zbMATH01698557}
M.~{Hovey}.
\newblock {Spectra and symmetric spectra in general model categories}.
\newblock {\em {J. Pure Appl. Algebra}}, 165(1):63--127, 2001.

\bibitem{MR3341470}
M.~Hoyois.
\newblock From algebraic cobordism to motivic cohomology.
\newblock {\em J. Reine Angew. Math.}, 702:173--226, 2015.

\bibitem{MR3730515}
M.~Hoyois, S.~Kelly, and P.~A. {\O}stv{\ae}r.
\newblock The motivic {S}teenrod algebra in positive characteristic.
\newblock {\em J. Eur. Math. Soc. (JEMS)}, 19(12):3813--3849, 2017.

\bibitem{zbMATH01887476}
P.~{Hu}.
\newblock {\em {\(S\)-modules in the category of schemes}}, volume 767.
\newblock Providence, RI: American Mathematical Society (AMS), 2003.

\bibitem{zbMATH05898278}
P.~{Hu}, I.~{Kriz}, and K.~{Ormsby}.
\newblock {Remarks on motivic homotopy theory over algebraically closed
  fields}.
\newblock {\em {J. \(K\)-Theory}}, 7(1):55--89, 2011.

\bibitem{2014arXiv1407.8418I}
D.~C. {Isaksen}.
\newblock {\em {Stable stems}}, volume 1269.
\newblock {Mem. Am. Math. Soc.,} Providence, RI: American Mathematical Society
  (AMS), 2019.

\bibitem{zbMATH07303324}
D.~C. {Isaksen} and P.~A. {{\O}stv{\ae}r}.
\newblock {Motivic stable homotopy groups}.
\newblock In {\em {Handbook of homotopy theory}}, pages 757--791. Boca Raton,
  FL: CRC Press, 2020.

\bibitem{zbMATH01527083}
J.~F. {Jardine}.
\newblock {Motivic symmetric spectra}.
\newblock {\em {Doc. Math.}}, 5:445--553, 2000.

\bibitem{MR445498}
M.~Mahowald.
\newblock A new infinite family in {${}_{2}\pi_{*}{}^s$}.
\newblock {\em Topology}, 16(3):249--256, 1977.

\bibitem{zbMATH00224014}
J.~E. {McClure} and R.~E. {Staffeldt}.
\newblock {On the topological Hochschild homology of \(bu\). I}.
\newblock {\em {Am. J. Math.}}, 115(1):1--45, 1993.

\bibitem{zbMATH03730966}
D.~C. {Ravenel} and W.~S. {Wilson}.
\newblock {The Morava K-theories of Eilenberg-MacLane spaces and the
  Conner-Floyd conjecture}.
\newblock {\em {Am. J. Math.}}, 102:691--748, 1980.

\bibitem{zbMATH06374152}
M.~{Robalo}.
\newblock {\(K\)-theory and the bridge from motives to noncommutative motives}.
\newblock {\em {Adv. Math.}}, 269:399--550, 2015.

\bibitem{zbMATH05284762}
O.~{R\"ondigs} and P.~A. {{\O}stv{\ae}r}.
\newblock {Rigidity in motivic homotopy theory}.
\newblock {\em {Math. Ann.}}, 341(3):651--675, 2008.

\bibitem{zbMATH07003144}
O.~{R\"ondigs}, M.~{Spitzweck}, and P.~A. {{\O}stv{\ae}r}.
\newblock {The first stable homotopy groups of motivic spheres}.
\newblock {\em {Ann. Math. (2)}}, 189(1):1--74, 2019.

\bibitem{zbMATH07015021}
M.~{Spitzweck}.
\newblock {A commutative \({\mathbb P}^1\)-spectrum representing motivic
  cohomology over Dedekind domains}.
\newblock {\em {M\'em. Soc. Math. Fr., Nouv. S\'er.}}, 157:1--110, 2018.

\bibitem{MR1764203}
A.~A. Suslin.
\newblock Higher {C}how groups and etale cohomology.
\newblock In {\em Cycles, transfers, and motivic homology theories}, volume 143
  of {\em Ann. of Math. Stud.}, pages 239--254. Princeton Univ. Press,
  Princeton, NJ, 2000.

\bibitem{MR1648048}
V.~Voevodsky.
\newblock {$\bold A^1$}-homotopy theory.
\newblock In {\em Proceedings of the {I}nternational {C}ongress of
  {M}athematicians, {V}ol. {I} ({B}erlin, 1998)}, Extra Vol. I, pages 579--604,
  1998.

\bibitem{MR1977582}
V.~Voevodsky.
\newblock Open problems in the motivic stable homotopy theory. {I}.
\newblock In {\em Motives, polylogarithms and {H}odge theory, {P}art {I}
  ({I}rvine, {CA}, 1998)}, volume~3 of {\em Int. Press Lect. Ser.}, pages
  3--34. Int. Press, Somerville, MA, 2002.

\bibitem{Voevodskymod2}
V.~Voevodsky.
\newblock Motivic cohomology with {${\bf Z}/2$}-coefficients.
\newblock {\em Publ. Math. Inst. Hautes \'Etudes Sci.}, 98:59--104, 2003.

\bibitem{MR2031198}
V.~Voevodsky.
\newblock Reduced power operations in motivic cohomology.
\newblock {\em Publ. Math. Inst. Hautes \'Etudes Sci.}, 98:1--57, 2003.

\bibitem{MR2811603}
V.~Voevodsky.
\newblock On motivic cohomology with {$\bold Z/l$}-coefficients.
\newblock {\em Ann. of Math. (2)}, 174(1):401--438, 2011.

\bibitem{zbMATH06698209}
G.~M. {Wilson} and P.~A. {{\O}stv{\ae}r}.
\newblock {Two-complete stable motivic stems over finite fields}.
\newblock {\em {Algebr. Geom. Topol.}}, 17(2):1059--1104, 2017.

\end{thebibliography}
\end{footnotesize}
\vspace{0.1in}

\begin{center}
Department of Mathematics, University of Bergen, Norway\\
email: dundas@math.uib.no
\end{center}
\begin{center}
Department of Mathematics, University of California, Los Angeles, USA\\
email: mikehill@math.ucla.edu
\end{center}
\begin{center}
Department of Mathematics,, Reed College, Portland, USA\\
email: ormsbyk@reed.edu
\end{center}
\begin{center}
Department of Mathematics F. Enriques, University of Milan, Italy \\ 
Department of Mathematics, University of Oslo, Norway \\
email: paul.oestvaer@unimi.it, paularne@math.uio.no
\end{center}
\end{document}